\theoremstyle{plain}
\newtheorem{theorem}{Theorem}[section]
\newtheorem{lemma}[theorem]{Lemma}
\theoremstyle{definition}
\theoremstyle{remark}
\newtheorem{remark}[theorem]{Remark}
\numberwithin{equation}{section}
\numberwithin{figure}{section}
\numberwithin{table}{section}
\newcommand{\M}{\operatorname{M}}
\newcommand{\E}{\operatorname{E}}
\newcommand{\Od}{\operatorname{O}}
\newcommand{\wt}{\operatorname{wt}}
\title{Enumeration of tilings of quartered Aztec rectangles}
\author{Tri Lai\\
\small Department of Mathematics\\[-0.8ex]
\small Indiana University\\[-0.8ex]
\small Bloomington, IN 47405\\
\small\tt tmlai@indiana.edu
}
\date{\small Mathematics Subject Classifications: 05A15, 05C70, 05E99}
\begin{document}

\maketitle


\begin{abstract}
We generalize a theorem of  W. Jockusch and J. Propp on quartered Aztec diamonds by enumerating the tilings of quartered Aztec rectangles. We use subgraph replacement method to transform the dual graph of a quartered Aztec rectangle to the dual graph of a quartered lozenge hexagon, and then use Lindstr\"{o}m-Gessel-Viennot methodology to find the number of tilings of a quartered lozenge hexagon.

  \bigskip\noindent \textbf{Keywords:} Aztec diamonds, domino tilings, perfect matchings, quartered Aztec diamonds
\end{abstract}

\section{Introduction}

A lattice divides the plane into fundamental regions. A (lattice) \textit{region} is a finite connected union of fundamental regions of that lattice. A \textit{tile} is the union of any two fundamental regions sharing an edge. A \textit{tiling} of the region $R$ is a covering of $R$ by tiles with no gaps or overlaps. Denote by $\M(R)$ the number of tilings of the region $R$.

In general, the tiles of a region  $R$ can carry weights. The \textit{weight of a tiling} is defined to be the product of the weights of all constituent tiles.  The operation $\M(R)$ is now defined to be the sum of the weights of all tilings in $R$, and is called the \textit{tiling generating function} of $R$. If $R$ does not have any tiling, we let $\M(R):=0$.

 The \textit{Aztec diamond} of order $n$ is defined to be the union of all the unit squares with integral corners $(x,y)$ satisfying $|x|+|y|\leq n+1$. The Aztec diamond  of order $8$ is shown in Figure \ref{QA}. We denote by $\mathcal{AD}_{n}$ the Aztec diamond of order $n$. It has been shown that the number of tilings of $AD_n$ is $2^{\frac{n(n+1)}{2}}$ (see \cite{Elkies}).

We are interested in three related families of regions first introduced by Jockusch and Propp \cite{JP} as follows. Divide the Aztec diamond of order $n$ into two congruent parts by a zigzag cut with 2-unit steps. By superimposing two such zigzag cuts that pass the center of the Aztec diamond we partition the region into four parts, called \textit{quartered Aztec diamonds}. Up to symmetry, there are essentially two different ways we can superimpose the two cuts. For one of them, we obtained a fourfold rotational symmetric pattern, and four resulting parts are congruent. Denote by $R(n)$ these quartered Aztec diamonds (see Figure \ref{QA}(a)). For the other, the obtained pattern has Klein 4-group reflection symmetry and there are two different kinds of quartered Aztec diamonds (see Figure \ref{QA} (b)); they are called \textit{abutting} and \textit{non-abutting} quartered Aztec diamonds. Denote by $K_{a}(n)$ and $K_{na}(n)$ the abutting and non-abutting quartered Aztec diamonds of order $n$, respectively.

\begin{figure}\centering
\includegraphics[width=13cm]{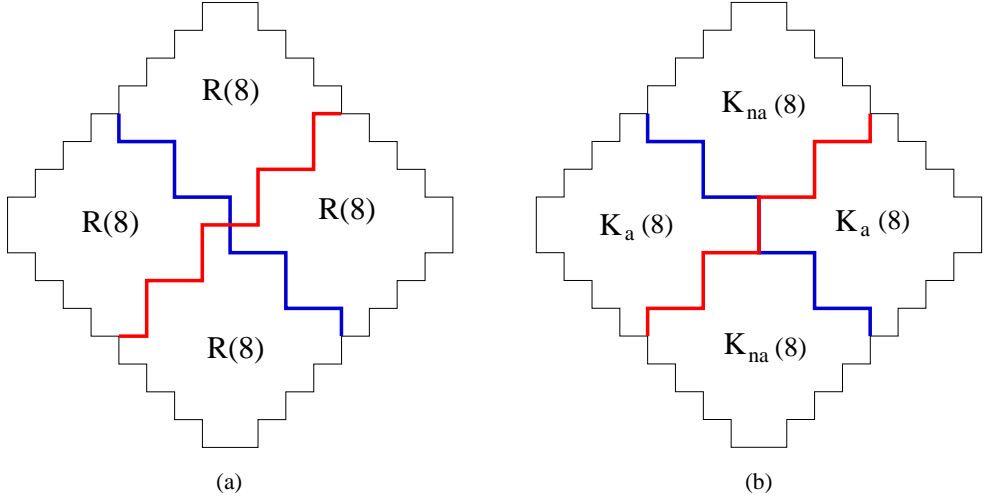}
\caption{Three kinds of quartered Aztec diamonds of order 8. The figure first appeared in \cite{Tri2}.}
\label{QA}
\end{figure}

\begin{figure}\centering
\begin{picture}(0,0)%
\includegraphics{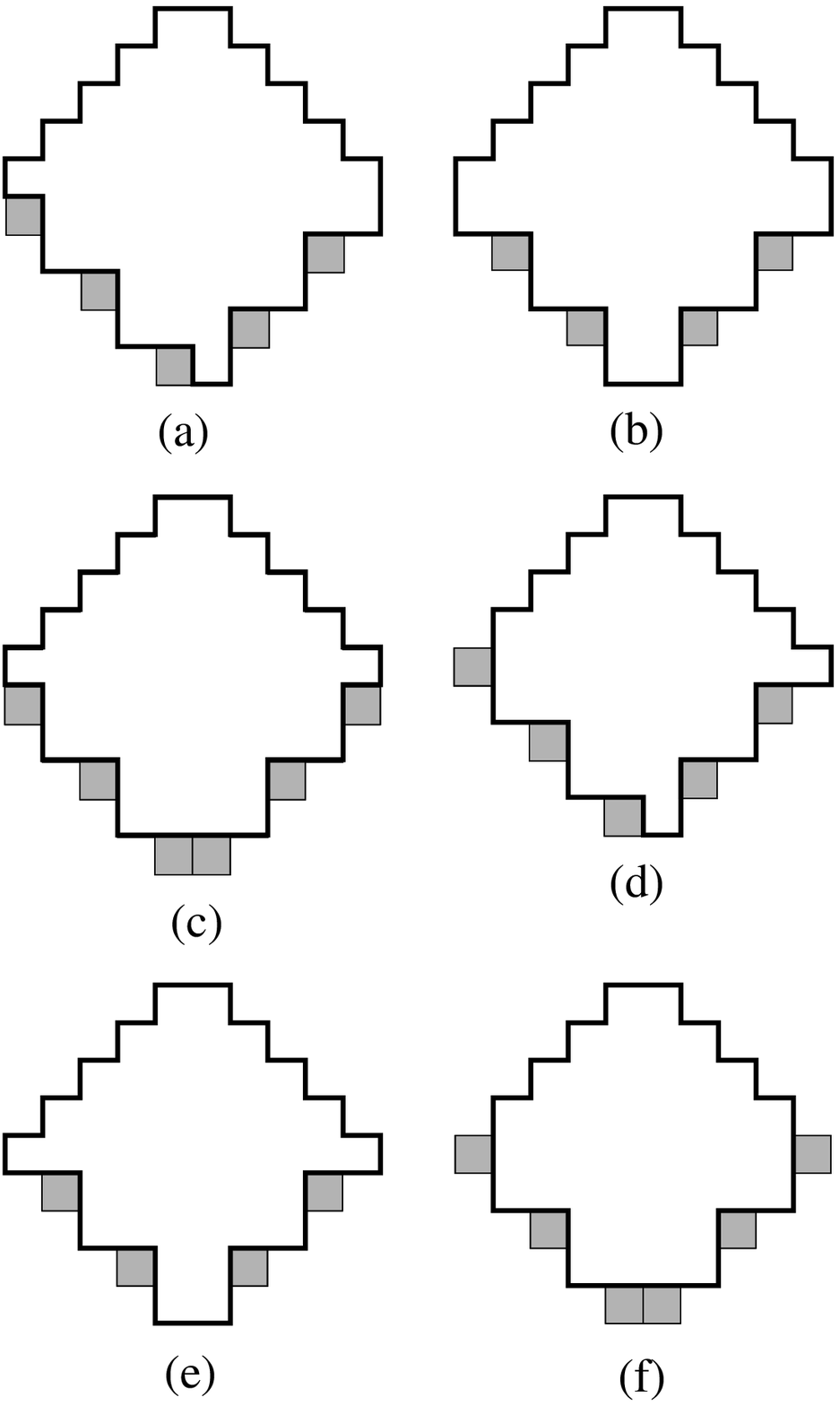}%
\end{picture}%
%
%
\setlength{\unitlength}{3947sp}%
\begingroup\makeatletter\ifx\SetFigFont\undefined%
\gdef\SetFigFont#1#2#3#4#5{%
  \reset@font\fontsize{#1}{#2pt}%
  \fontfamily{#3}\fontseries{#4}\fontshape{#5}%
  \selectfont}%
\fi\endgroup%
\begin{picture}(5237,8813)(694,-8663)
\put(1655,-1051){\makebox(0,0)[lb]{\smash{{\SetFigFont{12}{14.4}{\rmdefault}{\mddefault}{\updefault}{$R(10)$}%
}}}}
\put(4489,-1051){\makebox(0,0)[lb]{\smash{{\SetFigFont{12}{14.4}{\rmdefault}{\mddefault}{\updefault}{$K_a(10)$}%
}}}}
\put(1662,-4130){\makebox(0,0)[lb]{\smash{{\SetFigFont{12}{14.4}{\rmdefault}{\mddefault}{\updefault}{$K_{na}(10)$}%
}}}}
\put(4510,-4142){\makebox(0,0)[lb]{\smash{{\SetFigFont{12}{14.4}{\rmdefault}{\mddefault}{\updefault}{$R(9)$}%
}}}}
\put(1644,-7188){\makebox(0,0)[lb]{\smash{{\SetFigFont{12}{14.4}{\rmdefault}{\mddefault}{\updefault}{$K_n(9)$}%
}}}}
\put(4479,-7188){\makebox(0,0)[lb]{\smash{{\SetFigFont{12}{14.4}{\rmdefault}{\mddefault}{\updefault}{$K_{na}(9)$}%
}}}}
\end{picture}
\caption{Obtaining quartered Aztec diamonds from Aztec diamonds and trimmed Aztec diamonds.}
\label{QAztec}
\end{figure}

For $a_1<a_2<\dotsc<a_n$, we define two functions by setting
\begin{equation}\label{E}
\E(a_1,a_2,\dotsc,a_n)=\frac{2^{n^2}}{0!2!4!\dotsc(2n-2)!}\prod_{1\leq i<j\leq n}(a_j-a_i)\prod_{1\leq i<j\leq n}(a_i+a_j-1),
\end{equation}
\begin{equation}\label{O}
\Od(a_1,a_2,\dotsc,a_n)=\frac{2^{n^2}}{1!3!5!\dotsc(2n-1)!}\prod_{1\leq i<j \leq n}(a_j-a_i)\prod_{1\leq i\leq j \leq n}(a_i+a_j-1).
\end{equation}
Hereafter, the empty products (like $\prod_{1\leq i<j\leq n}(a_j-a_i)$ for $n=1$) equal 1 by convention.
The above two functions have a special connection to the weighted sum of \textit{antisymmetric monotone triangles} (see \cite{JP}).

The number of tilings of a quartered Aztec diamond is given by the theorem stated below.
\begin{theorem}[Jockusch and Propp \cite{JP}]\label{oldquartered}
For any positive integer $n$
\begin{equation}\label{oldmain1}
\M(R(4n+1))=\M(R(4n+2))=0,
\end{equation}
\begin{equation}\label{oldmain2}
\M(R(4n))=2^n\M(R(4n-1))=\E(2,4,\dotsc,2n),
\end{equation}
\begin{equation}\label{oldmain3}
\M(K_{a}(4n-2))=\M(K_a(4n))=\E(1,3,5,\dotsc,2n-1),
\end{equation}
\begin{equation}\label{oldmain4}
\M(K_{a}(4n-1))=\M(K_a(4n+1))=2^{-n}\E(1,3,5,\dotsc,2n-1),
\end{equation}
\begin{equation}\label{oldmain5}
\M(K_{na}(4n))=\M(K_{na}(4n+2))=\Od(2,4,\dotsc,2n),
\end{equation}
\begin{equation}\label{oldmain6}
\M(K_{na}(4n-3))=\M(K_{na}(4n-1))=2^{-n}\E(1,3,5,\dotsc,2n-1).
\end{equation}
\end{theorem}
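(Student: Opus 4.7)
The plan is to follow the strategy announced in the abstract, specialized to the diamond case: translate each tiling count to a perfect matching count on a planar bipartite graph, transform that graph by local subgraph replacements into the dual of a quartered lozenge hexagon, and then evaluate the resulting count by non-intersecting lattice paths. The formulas \eqref{oldmain1}--\eqref{oldmain6} then emerge as a closed-form evaluation of the resulting determinant.

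First I would dispose of the vanishing identities \eqref{oldmain1}. A black/white vertex count on the dual graph of $R(4n+1)$ and $R(4n+2)$ shows that the two color classes have unequal cardinality, so no perfect matching can exist. The pairwise equalities in \eqref{oldmain2}--\eqref{oldmain6} (such as $\M(K_a(4n-2))=\M(K_a(4n))$) I would address by a direct bijection: along the zig-zag boundary a strip of dominoes is forced, and removing it identifies the two regions up to a possible compensating power of $2$ depending on how many forced dominoes are removed.

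On the nontrivial side, I would apply urban renewal (the ``spider move'') uniformly at every interior $2\times 2$ block of the dual square grid, followed by contraction of the resulting degree-two vertices. This reduces the weighted matching problem on the square lattice to a weighted matching problem on a honeycomb-type graph, and a careful boundary bookkeeping identifies the resulting graph as the dual of a quartered lozenge hexagon whose ``gap positions'' along one boundary are exactly $(2,4,\dotsc,2n)$ in the $R$-type cases and $(1,3,\dotsc,2n-1)$ in the $K$-type cases. Each urban renewal move introduces a multiplicative factor of $2$, and the total number of such moves, together with the shape of the resulting hexagon, accounts both for the $2^{n^2}$ in \eqref{E}--\eqref{O} and for the discrepancies of $2^n$ or $2^{-n}$ appearing between the partner formulas \eqref{oldmain2}, \eqref{oldmain4}, \eqref{oldmain6}.

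Finally, I would encode each lozenge tiling of the quartered hexagon as a family of non-intersecting lattice paths between source and sink sets placed at the positions $(a_1,\dotsc,a_n)$ along the two relevant boundaries, and apply the Lindstr\"om--Gessel--Viennot lemma to express the count as a determinant of binomial entries. The main obstacle is the evaluation of this determinant: I expect that suitable row operations expose a Vandermonde-like factor and produce the double product $\prod_{i<j}(a_j-a_i)\prod_{i<j}(a_i+a_j-1)$ (or the $i\le j$ variant), with the residual constant collapsing to a ratio of products of double factorials. The delicate step will be matching this constant against the hyperfactorial prefactor $\frac{2^{n^2}}{0!2!\cdots(2n-2)!}$ or $\frac{2^{n^2}}{1!3!\cdots(2n-1)!}$ after incorporating the accumulated power of $2$ from the urban-renewal reductions; getting these constants to align correctly, rather than the structural form of the product, is where I anticipate the technical difficulty to lie.
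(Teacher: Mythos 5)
Your outline reproduces the paper's announced strategy, but note first that the paper does not prove Theorem \ref{oldquartered} directly: it is quoted from \cite{JP} (with a short proof in \cite{Tri2}) and is recovered as the special case $m=n$, $\{a_i\}=\{2,4,\dotsc,2n\}$ or $\{1,3,\dotsc,2n-1\}$, of Theorem \ref{main}. Your treatment of \eqref{oldmain1} by a colour count (indeed a parity-of-area count) is fine. The serious gaps are in the middle of your argument. First, forced dominoes carry weight $1$, so removing a forced strip can never produce the factors $2^{n}$ or $2^{-n}$ separating the partner formulas in \eqref{oldmain2}, \eqref{oldmain4} and \eqref{oldmain6}; in the paper those factors arise from the weighted Spider and Star Lemmas (as in Lemma \ref{lem1}) and from Ciucu's Factorization Theorem, while the genuinely equal pairs come from forced edges alone (Lemma \ref{lem2}). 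Second, one uniform pass of urban renewal over the interior $2\times 2$ cells of an Aztec-diamond dual graph returns another square-lattice (Aztec-type) graph --- this is the classical proof that $\M(\mathcal{AD}_n)=2^{n(n+1)/2}$ --- not a honeycomb graph; the passage to the quartered hexagon requires the iterated, alternating row-by-row transformations of Lemma \ref{transform}, applied $2k-2$ or $2k-1$ times as in Lemma \ref{QHex2} and supported by vertex-splitting, the Star Lemma and the two-$4$-cycle Lemma \ref{4cycle}. Your one-shot reduction would not land on the claimed graph.

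Third, the hexagon-plus-LGV route only reaches the $\E$-type identities; the paper obtains \eqref{oldmain5} (the $\Od$ formula) and the reciprocal-power-of-two cases by applying the Factorization Theorem to symmetric semi-hexagons and to Aztec rectangles with boundary defects, then dividing out the already-known factor using the product formulas of Lemmas \ref{semihex}, \ref{lem4} and \ref{lem5}; nothing in your plan produces these. Finally, the determinant $\det\bigl(\binom{a_i+j-2}{2j-2}\bigr)$ is not evaluated by exposing a Vandermonde factor through row operations: after clearing the factorials one needs Krattenthaler's identity (Lemma \ref{Klem}) to obtain $\prod_{i<j}(a_j-a_i)(a_i+a_j-1)$, and the distinction between the index ranges $i<j$ and $i\leq j$ in \eqref{E} versus \eqref{O} is exactly what that identity (with $C=1$ versus $C=0$) controls. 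So while your high-level route is the right one, each of the three non-routine steps is asserted rather than supplied.
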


We notice that
\begin{equation}
\E(2,4,\dotsc,2n)=\Od(1,3,5,\dotsc,2n-1)=2^{n(3n-1)/2}\prod_{1\leq i<j\leq n}\frac{2i+2j-1}{i+j-1},
\end{equation}
and that the author presented a simple proof for Theorem \ref{oldquartered} in \cite{Tri2}.

\medskip

We define a \textit{trimmed Aztec diamond} of order $n$ to be the region obtained from an Aztec diamond of order $n$ by removing the squares running along the northwestern and northeastern side, denoted by $\mathcal{TA}_n$.

\begin{figure}\centering
\includegraphics[width=12cm]{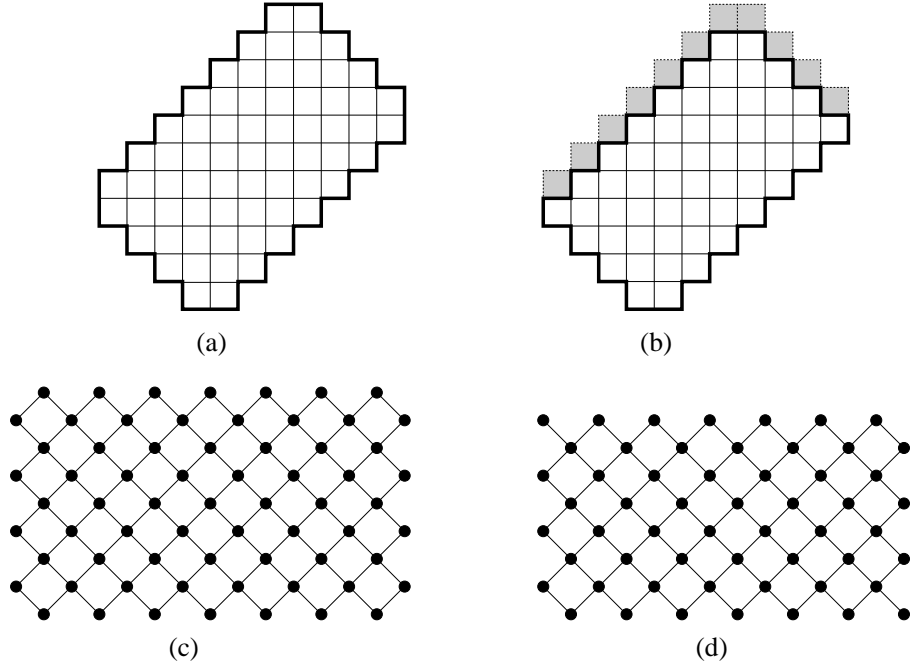}
\caption{The Aztec rectangle $AR_{4,7}$ (a), the trimmed Aztec rectangle $TR_{4,7}$ (b), the dual graph (rotated $45^0$ clockwise) of $AR_{4,7}$ (c), and the dual graph of $TR_{4,7}$ (d). }
\label{ARregion}
\end{figure}

\begin{figure}\centering
\resizebox{!}{15cm}{
\begin{picture}(0,0)%
\includegraphics{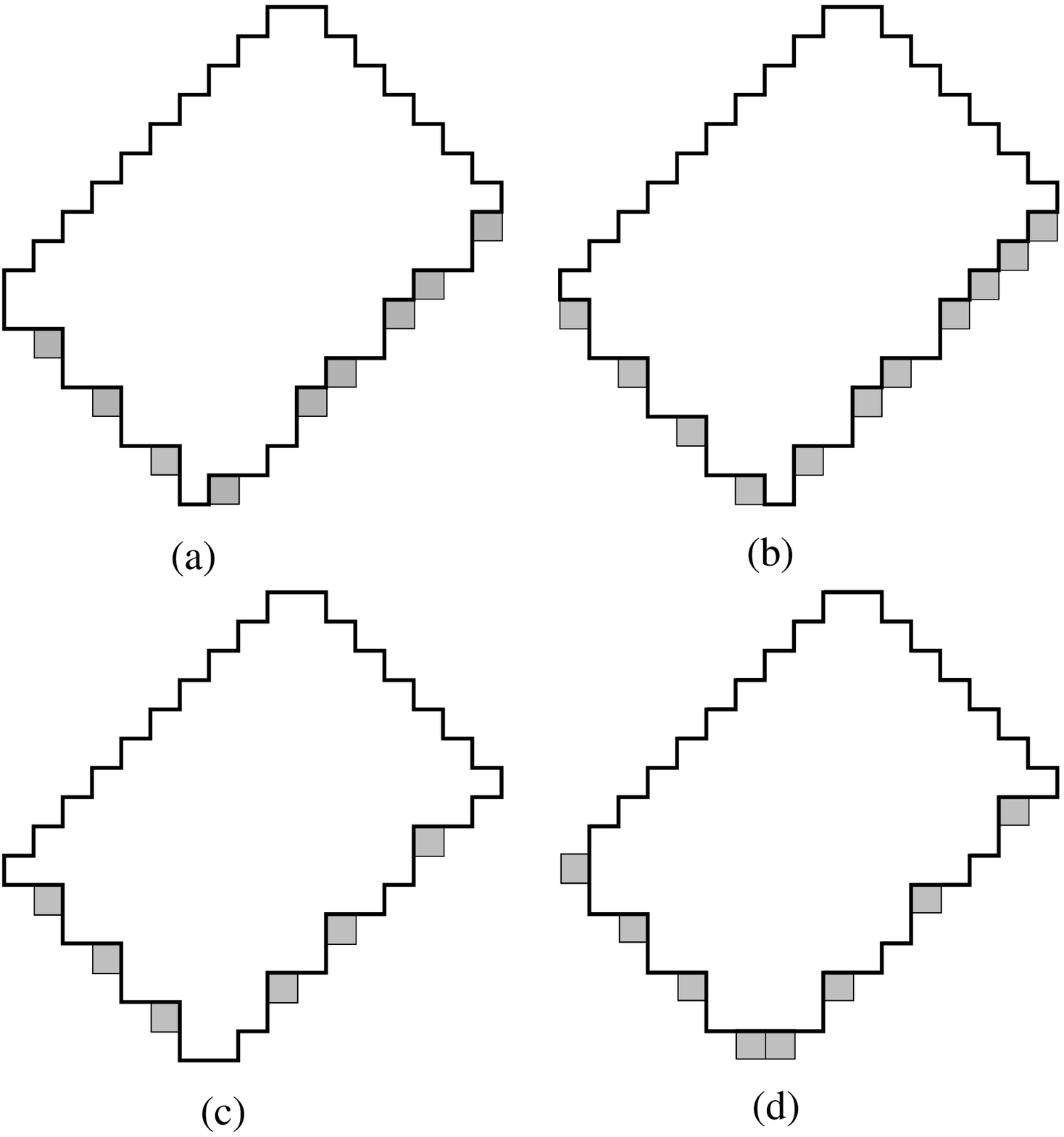}%
\end{picture}%
%
%
\setlength{\unitlength}{3947sp}%
\begingroup\makeatletter\ifx\SetFigFont\undefined%
\gdef\SetFigFont#1#2#3#4#5{%
  \reset@font\fontsize{#1}{#2pt}%
  \fontfamily{#3}\fontseries{#4}\fontshape{#5}%
  \selectfont}%
\fi\endgroup%
\begin{picture}(8529,9144)(1413,-8762)
\put(3391,-3377){\makebox(0,0)[lb]{\smash{{\SetFigFont{12}{14.4}{\rmdefault}{\mddefault}{\updefault}{$2$}%
}}}}
\put(3623,-3137){\makebox(0,0)[lb]{\smash{{\SetFigFont{12}{14.4}{\rmdefault}{\mddefault}{\updefault}{$3$}%
}}}}
\put(4332,-2417){\makebox(0,0)[lb]{\smash{{\SetFigFont{12}{14.4}{\rmdefault}{\mddefault}{\updefault}{$6$}%
}}}}
\put(5033,-1697){\makebox(0,0)[lb]{\smash{{\SetFigFont{12}{14.4}{\rmdefault}{\mddefault}{\updefault}{$9$}%
}}}}
\put(7644,-3594){\makebox(0,0)[lb]{\smash{{\SetFigFont{12}{14.4}{\rmdefault}{\mddefault}{\updefault}{$1$}%
}}}}
\put(8101,-3121){\makebox(0,0)[lb]{\smash{{\SetFigFont{12}{14.4}{\rmdefault}{\mddefault}{\updefault}{$3$}%
}}}}
\put(8814,-2401){\makebox(0,0)[lb]{\smash{{\SetFigFont{12}{14.4}{\rmdefault}{\mddefault}{\updefault}{$6$}%
}}}}
\put(2566,-1741){\makebox(0,0)[lb]{\smash{{\SetFigFont{12}{14.4}{\rmdefault}{\mddefault}{\updefault}{$RE_{7,10}(2,3,6,9)$}%
}}}}
\put(7306,-1719){\makebox(0,0)[lb]{\smash{{\SetFigFont{12}{14.4}{\rmdefault}{\mddefault}{\updefault}{$RO_{7,10}(1,3,6)$}%
}}}}
\put(3609,-7629){\makebox(0,0)[lb]{\smash{{\SetFigFont{12}{14.4}{\rmdefault}{\mddefault}{\updefault}{$3$}%
}}}}
\put(4074,-7149){\makebox(0,0)[lb]{\smash{{\SetFigFont{12}{14.4}{\rmdefault}{\mddefault}{\updefault}{$5$}%
}}}}
\put(4801,-6436){\makebox(0,0)[lb]{\smash{{\SetFigFont{12}{14.4}{\rmdefault}{\mddefault}{\updefault}{$8$}%
}}}}
\put(7651,-8071){\makebox(0,0)[lb]{\smash{{\SetFigFont{12}{14.4}{\rmdefault}{\mddefault}{\updefault}{$1$}%
}}}}
\put(8110,-7609){\makebox(0,0)[lb]{\smash{{\SetFigFont{12}{14.4}{\rmdefault}{\mddefault}{\updefault}{$3$}%
}}}}
\put(8821,-6901){\makebox(0,0)[lb]{\smash{{\SetFigFont{12}{14.4}{\rmdefault}{\mddefault}{\updefault}{$6$}%
}}}}
\put(9529,-6190){\makebox(0,0)[lb]{\smash{{\SetFigFont{12}{14.4}{\rmdefault}{\mddefault}{\updefault}{$9$}%
}}}}
\put(2887,-6268){\makebox(0,0)[lb]{\smash{{\SetFigFont{12}{14.4}{\rmdefault}{\mddefault}{\updefault}{$TE_{7,10}(3,5,8)$}%
}}}}
\put(7384,-6349){\makebox(0,0)[lb]{\smash{{\SetFigFont{12}{14.4}{\rmdefault}{\mddefault}{\updefault}{$TO_{7,10}(1,3,6,9)$}%
}}}}
\end{picture}}
\caption{Obtaining the quartered Aztec rectangles from Aztec rectangles and trimmed Aztec rectangles.}
\label{QAztec2}
\end{figure}

Label the squares on the southwestern and southeastern sides of $\mathcal{AD}_n$ and $\mathcal{TA}_n$ by $1,2,\dots,n$ from bottom to top. One readily sees that the region $R(2k)$ (resp., $R(2k-1)$) is obtained from $\mathcal{AD}_k$ (reps., $\mathcal{TA}_{k}$) by removing odd squares on its southwestern side, and even squares on its southeastern side. Similarly, the region $K_{a}(2k)$ (resp., $K_{a}(2k-1)$) is obtained from the region $\mathcal{AD}_k$ (reps., $\mathcal{TA}_{k}$) by removing even squares from both southwestern and southeastern sides; the region $K_{na}(2k)$ (resp., $K_{na}(2k-1)$) is obtained from the region $\mathcal{AD}_k$ (reps., $\mathcal{TA}_{k}$) by removing odd squares from the two sides (see Figure \ref{QAztec} for examples; the quartered Aztec diamonds are the ones restricted by the bold contours; the shaded squares indicate the ones removed).

Besides Aztec diamonds, we are interested in a similar families of regions called \textit{Aztec rectangles}.  See Figures \ref{ARregion}(a)  and (c) for an example of the Aztec rectangle of order $(4,7)$ and its \textit{dual graph}, i.e. the graph whose vertices are the unit squares of the region and the edges connect exactly two unit squares sharing a side. Denote by $\mathcal{AR}_{m,n}$ the Aztec rectangle region of order $(m,n)$. We also consider the \textit{trimmed Aztec rectangle} region $\mathcal{TR}_{m,n}$ obtained from $\mathcal{AR}_{m,n}$ by removing squares running along its northwestern and northeastern sides (see Figures \ref{ARregion}(b) and (d) for a trimmed Aztec rectangle and its dual graph). We notice that the regions $\mathcal{AD}_n$ and $\mathcal{TA}_{n}$ are obtained from $\mathcal{AR}_{m,n}$ and $\mathcal{TR}_{m,n}$, respectively, by specializing $m=n$.

Similar to quartered Aztec diamonds, we consider the region obtained from $\mathcal{AR}_{m,n}$ by removing even squares on the southwestern side, and removing  \textit{arbitrarily} $n-\lfloor\frac{m+1}{2}\rfloor$  squares on the southeastern sides. Assume that we are removing all the squares, except for the $a_1$-st, the $a_2$-nd, $\dotsc$, and the $a_{\lfloor\frac{m+1}{2}\rfloor}$-th ones, from the southeastern side, then we denote by $RE_{m,n}(a_1,a_2,\dotsc,a_{\lfloor\frac{m+1}{2}\rfloor})$ the resulting region (see Figure \ref{QAztec2}(a)). Next, we remove all odd squares from the southwestern side of $\mathcal{AR}_{m,n}$, and remove all squares, except for the ones with labels $a_1<a_2<\dotsc<a_{\lfloor\frac{m}{2}\rfloor}$, from the southeastern side. We denote by $RO_{m,n}(a_1,a_2,\dotsc,a_{\lfloor\frac{m}{2}\rfloor})$ the resulting region (see Figure \ref{QAztec2}(b)).

If we remove all even squares on the southwestern side of $\mathcal{TR}_{m,n}$, and also remove the square $a_1<a_2<\dotsc<a_{\lfloor\frac{m}{2}\rfloor}$ from its southeastern side, then we get the region denoted by $TE_{m,n}(a_1,a_2,\dotsc,a_{\lfloor\frac{m}{2}\rfloor})$ (illustrated in Figure \ref{QAztec2}(c)). Repeat process with the odd squares on the southwestern side removed, we get region $TO_{m,n}(a_1,a_2,\dotsc,a_{\lfloor\frac{m+1}{2}\rfloor})$ (shown in Figure \ref{QAztec2}(d)).

We call the four regions in the previous two paragraphs \textit{quartered Aztec rectangles}. Surprisingly, the numbers of tilings of quartered Aztec rectangles are given by simple product formulas involving two functions $\E(...)$ and $\Od(...)$ defined in (\ref{E}) and (\ref{O}).

\begin{theorem}\label{main} For any $1\leq k< n$ and $1\leq a_1<a_2<\dotsc<a_k\leq n$
\begin{equation}\label{main1}
\M(RE_{2k-1,n}(a_1,a_2,\dotsc,a_k))=\M(RE_{2k,n}(a_1,a_2,\dotsc,a_k))=\E(a_1,a_2,\dotsc,a_k),
\end{equation}
\begin{equation}\label{main2}
\M(RO_{2k,n}(a_1,a_2,\dotsc,a_k))=\M(RO_{2k+1,n}(a_1,a_2,\dotsc,a_k))=\Od(a_1,a_2,\dotsc,a_k),
\end{equation}
\begin{equation}\label{main3}
\M(TE_{2k,n}(a_1,a_2,\dotsc,a_{k}))=\M(TE_{2k+1,n}(a_1,a_2,\dotsc,a_{k}))=2^{-k}\Od(a_1,a_2,\dotsc,a_k),
\end{equation}
\begin{equation}\label{main4}
\M(TO_{2k-1,n}(a_1,a_2,\dotsc,a_{k}))=\M(TO_{2k,n}(a_1,a_2,\dotsc,a_{k}))=2^{-k}\E(a_1,a_2,\dotsc,a_k).
\end{equation}
\end{theorem}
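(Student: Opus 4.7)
The plan is to pass to the dual graph and treat $\M(\cdot)$ as a perfect-matching generating function, and then run the author's two-stage program outlined in the abstract: first convert the dual graph of a quartered Aztec rectangle into the dual graph of a suitable \emph{quartered lozenge hexagon} by a sequence of subgraph replacements (urban renewal / spider moves), tracking a scalar weight factor carefully; second, count lozenge tilings of that hexagonal region via non-intersecting lattice paths and Lindstr\"om--Gessel--Viennot, then evaluate the resulting determinant in closed form to match $\E(a_1,\dotsc,a_k)$ or $\Od(a_1,\dotsc,a_k)$.

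My first step would be to dispose of each of the four pairs of equalities in (\ref{main1})--(\ref{main4}) by a forced-tiling argument. Passing from the ``odd'' variant to the ``even'' one in each pair adds exactly one row of unit squares along the appropriate northern side of the region, and the pattern of squares already removed from the southwestern/southeastern sides (all even, all odd, etc.) forces a unique strip of dominoes to cover that extra row. This reduces each identity to a single representative, say $\M(RE_{2k-1,n}(a_1,\dotsc,a_k))=\E(a_1,\dotsc,a_k)$, and similarly for the other three families; no weight factor is picked up because the forced dominoes carry weight $1$.

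Second, on the representative $RE_{2k-1,n}(a_1,\dotsc,a_k)$ I would iteratively apply urban renewal to the square faces of the Aztec-rectangle part of the dual graph, together with edge-contractions for vertices of degree two, to convert it into a weighted hexagonal graph. Each urban renewal contributes a factor of the weight of the local $2\times 2$ block (here all edges have weight $1$, so each move contributes a factor of $2$), and after the correct number of moves the graph becomes isomorphic to the dual graph of a quartered lozenge hexagon in which the boundary notches are controlled by the parameters $a_1,\dotsc,a_k$. The overall $2$-power collected here should match the $2^{n^2}$ normalization in (\ref{E})--(\ref{O}) (and, for (\ref{main3})--(\ref{main4}), produce the extra $2^{-k}$). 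The analogous reductions for $RO$, $TE$, and $TO$ differ only in boundary bookkeeping.

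Third, I would use the standard bijection between lozenge tilings of the quartered hexagon and families of $k$ non-intersecting lattice paths whose starting points lie on a staircase and whose ending points are indexed by $a_1,\dotsc,a_k$. Lindstr\"om--Gessel--Viennot then gives the matching count as a $k\times k$ binomial determinant. The main obstacle I expect is evaluating this determinant in closed form so that the answer separates into the two Vandermonde-type products $\prod_{i<j}(a_j-a_i)$ and $\prod_{i<j}(a_i+a_j-1)$ (respectively $\prod_{i\leq j}(a_i+a_j-1)$ in the $\Od$ case) divided by $0!2!\cdots(2k-2)!$ (respectively $1!3!\cdots(2k-1)!$). This evaluation is the crux; I would attack it via row/column manipulations that factor $\prod(a_j-a_i)$ out as a Vandermonde and then recognize the remaining determinant as one of Krattenthaler's standard evaluations, of the same type that underlies the antisymmetric-monotone-triangle identities used by Jockusch and Propp \cite{JP} in Theorem~\ref{oldquartered}. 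Combining this determinant evaluation with the $2$-power from Step 2 and the boundary adjustments from Step 1 yields (\ref{main1})--(\ref{main4}).
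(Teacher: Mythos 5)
Your plan coincides with the paper's proof for the two $\E$-type identities: the paper also disposes of each pair of equalities by forced dominoes (Lemma \ref{lem2}), then converts $RE_{2k-1,n}$ to $TO_{2k-1,n}$ and on to the quartered hexagon $QH_{2k-1,n}$ by vertex-splitting, urban renewal and the Star Lemma (Lemmas \ref{lem1}, \ref{transform}, \ref{QHex2}), collecting exactly the factor $2^{k^2}$ you anticipate, and finally evaluates the LGV binomial determinant $\det\bigl(\binom{a_i+j-2}{2j-2}\bigr)$ via Krattenthaler's identity to get $\E(a_1,\dotsc,a_k)$. So for (\ref{main1}) and (\ref{main4}) your proposal is essentially the paper's argument.

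The gap is in your claim that the $RO$ and $TE$ cases ``differ only in boundary bookkeeping.'' They do not: the region one lands on after the graph replacements in the $\Od$ cases is a \emph{weighted} quartered hexagon ($\overline{QH}_{2k,n}$, with the $k$ vertical rhombi on its left side carrying weight $1/2$), and the corresponding LGV matrix is no longer the clean binomial matrix. More tellingly, the target product $\prod_{1\leq i\leq j\leq k}(a_i+a_j-1)$ in $\Od$ includes the diagonal factors $2a_i-1$, whereas the Krattenthaler evaluation you invoke (Lemma \ref{Klem}) only ever produces the strict product $\prod_{i<j}(C-X_i-X_j)$; so the determinant route you describe cannot by itself yield (\ref{main2}) and (\ref{main3}). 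The paper gets around this with a different idea you have not proposed: it applies Ciucu's Factorization Theorem to the holey Aztec rectangle graph $AR_{2k,2n}(S)$ with $S=\{n+1-a_k,\dots,n+1-a_1\}\cup\{n+a_1,\dots,n+a_k\}$, obtaining $\M(AR_{2k,2n}(S))=2^{k}\M(RE_{2k,n})\M(RO_{2k,n})$, and then divides the known Mills--Robbins--Rumsey product formula for the left-hand side by the already-established $\E(a_1,\dotsc,a_k)$; the diagonal factors $2a_i-1$ appear precisely because the cross-differences in $\Delta(S)$ contribute $\prod_{1\leq i,j\leq k}(a_i+a_j-1)$. You would need either this factorization step (or an independent evaluation of the weighted-path determinant) to complete the $\Od$ half of the theorem.
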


\medskip

The structure of Aztec rectangles allows us to have the four variants of quartered Aztec rectangles as follows.

\begin{figure}\centering
\resizebox{!}{12cm}{
\begin{picture}(0,0)%
\includegraphics{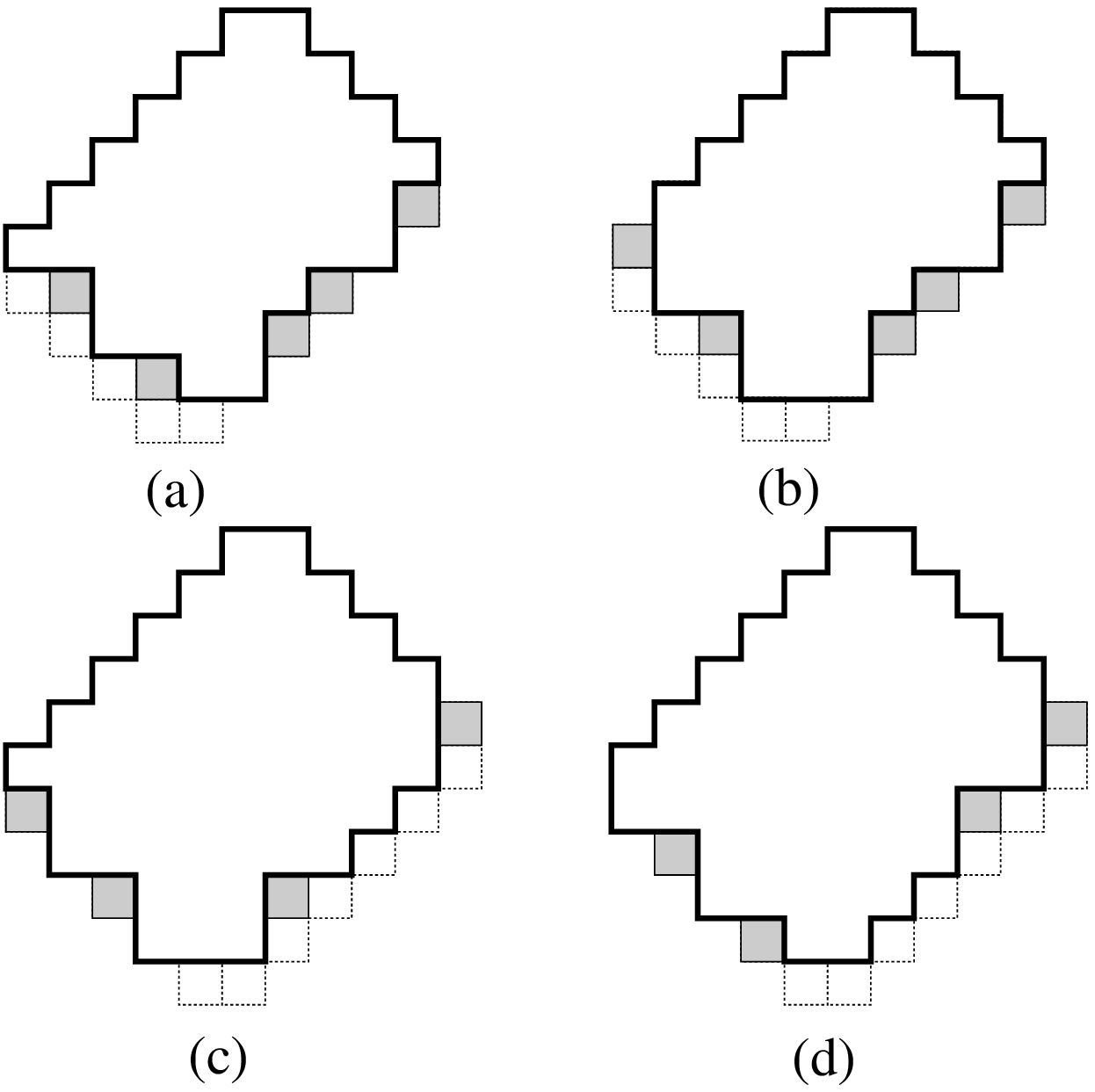}%
\end{picture}%
\setlength{\unitlength}{3947sp}%
\begingroup\makeatletter\ifx\SetFigFont\undefined%
\gdef\SetFigFont#1#2#3#4#5{%
  \reset@font\fontsize{#1}{#2pt}%
  \fontfamily{#3}\fontseries{#4}\fontshape{#5}%
  \selectfont}%
\fi\endgroup%
\begin{picture}(5929,5937)(698,-5784)
\put(5266,-1948){\makebox(0,0)[lb]{\smash{{\SetFigFont{12}{14.4}{\rmdefault}{\mddefault}{\updefault}{$1$}%
}}}}
\put(5986,-1213){\makebox(0,0)[lb]{\smash{{\SetFigFont{12}{14.4}{\rmdefault}{\mddefault}{\updefault}{$4$}%
}}}}
\put(2666,-1222){\makebox(0,0)[lb]{\smash{{\SetFigFont{12}{14.4}{\rmdefault}{\mddefault}{\updefault}{$4$}%
}}}}
\put(1967,-1937){\makebox(0,0)[lb]{\smash{{\SetFigFont{12}{14.4}{\rmdefault}{\mddefault}{\updefault}{$1$}%
}}}}
\put(1418,-1051){\makebox(0,0)[lb]{\smash{{\SetFigFont{12}{14.4}{\rmdefault}{\mddefault}{\updefault}{$\overline{RO}_{4,6}(1,4)$}%
}}}}
\put(4725,-1051){\makebox(0,0)[lb]{\smash{{\SetFigFont{12}{14.4}{\rmdefault}{\mddefault}{\updefault}{$\overline{RE}_{4,6}(1,4)$}%
}}}}
\put(1419,-4122){\makebox(0,0)[lb]{\smash{{\SetFigFont{12}{14.4}{\rmdefault}{\mddefault}{\updefault}{$\overline{TE}_{5,6}(2,6)$}%
}}}}
\put(4726,-4122){\makebox(0,0)[lb]{\smash{{\SetFigFont{12}{14.4}{\rmdefault}{\mddefault}{\updefault}{$\overline{TO}_{5,6}(4,6)$}%
}}}}
\put(2184,-4794){\makebox(0,0)[lb]{\smash{{\SetFigFont{12}{14.4}{\rmdefault}{\mddefault}{\updefault}{$2$}%
}}}}
\put(3144,-3849){\makebox(0,0)[lb]{\smash{{\SetFigFont{12}{14.4}{\rmdefault}{\mddefault}{\updefault}{$6$}%
}}}}
\put(6444,-3856){\makebox(0,0)[lb]{\smash{{\SetFigFont{12}{14.4}{\rmdefault}{\mddefault}{\updefault}{$6$}%
}}}}
\put(5979,-4314){\makebox(0,0)[lb]{\smash{{\SetFigFont{12}{14.4}{\rmdefault}{\mddefault}{\updefault}{$4$}%
}}}}
\end{picture}}
\caption{The four variants of quartered Aztec rectangles.}
\label{QAztec3}
\end{figure}

Start with the Aztec rectangle $\mathcal{AR}_{m,n}$. We first, remove all squares along the southwestern side of the region, and remove also the bottommost square of the resulting region (see the squares with dotted sides in Figure \ref{QAztec3}(a)). We get a region $R$. We also label the squares on the southwestern and southeastern side of $R$ by positive integers from bottom to top. Remove also the even squares on the southwestern side of $R$, and remove all squares, except for the ones with labels $a_1< a_2<\dotsc<a_{\lfloor\frac{m}{2}\rfloor}$, from the southeastern side of $R$. We get a region denoted by $\overline{RE}_{m,n}(a_1,a_2,\dotsc, a_{\lfloor\frac{m}{2}\rfloor})$ (see Figure \ref{QAztec3}(a)). Repeat the process, however, we remove all even squares from the southwestern side of $R$ (as opposed to odd squares), and remove again all the squares, except for the ones with labels $a_1< a_2<\dotsc<a_{\lfloor\frac{m+1}{2}\rfloor}$. We get the region $\overline{RO}_{m,n}(a_1,a_2,\dotsc, a_{\lfloor\frac{m+1}{2}\rfloor})$ (illustrated in Figure \ref{QAztec3}(b)).

We also start with the Aztec rectangle $\mathcal{AR}_{m,n}$. Again, we remove all squares along the \textit{southeastern}  side of the Aztec rectangle, and remove next the bottommost square the resulting region. Denote the just-obtained region by $R'$. We now remove all the even squares on the southwestern side of $R'$, and the $a_1, a_2, \dotsc,  a_{\lfloor\frac{m}{2}\rfloor}$ squares on the southeastern side.  We get the region $\overline{TE}_{m,n}(a_1,a_2,\dotsc, a_{\lfloor\frac{m}{2}\rfloor})$ (shown in Figure \ref{QAztec3}(c)). Do similarly, but remove the odd squares instead of the even squares on the southwestern side, we get the quartered Aztec rectangle $\overline{TO}_{m,n}(a_1,a_2,\dotsc, a_{\lfloor\frac{m-1}{2}\rfloor})$ (see Figure \ref{QAztec3}(d)).

We define two new function similar to $\E(...)$ and $\Od(...)$ as follows:
\begin{equation}\label{Eb}
\overline{\E}(a_1,a_2,\dotsc,a_n)=\frac{2^{n^2}a_1a_2\dotsc a_k}{0!2!4!\dotsc(2n-2)!}\prod_{1\leq i<j\leq n}(a_j-a_i)\prod_{1\leq i\leq j\leq n}(a_i+a_j),
\end{equation}
\begin{equation}\label{Ob}
\overline{\Od}(a_1,a_2,\dotsc,a_n)=\frac{2^{n^2}a_1a_2\dotsc a_k}{1!3!5!\dotsc(2n-1)!}\prod_{1\leq i<j\leq n}(a_j-a_i)\prod_{1\leq i< j\leq n}(a_i+a_j).
\end{equation}
 We have the following variant of Theorem \ref{main}.

\begin{theorem}\label{mainv} For any $1\leq k<n$ and $1\leq a_1<a_2<\dotsc<a_k\leq n$
\begin{equation}\label{VReq2}
\M(\overline{RE}_{2k,n}(a_1,a_2,\dotsc,a_k))=\M(\overline{RE}_{2k+1,n}(a_1,a_2,\dotsc,a_k))
=2^{k}\overline{\Od}(a_1,a_2,\dotsc,a_k)
\end{equation}
\begin{equation}\label{VReq1}
\M(\overline{RO}_{2k-1,n}(a_1,a_2,\dotsc,a_k))=\M(\overline{RO}_{2k,n}(a_1,a_2,\dotsc,a_k))
=2^{-k}\overline{\E}(a_1,a_2,\dotsc,a_k)
\end{equation}
\begin{equation}\label{VReq4}
\M(\overline{TE}_{2k,n}(a_1,a_2,\dotsc,a_{k}))=\M(\overline{TE}_{2k+1,n}(a_1,a_2,\dotsc,a_k))
=\overline{\Od}(a_1,a_2,\dotsc,a_k)
\end{equation}
\begin{equation}\label{VReq3}
\M(\overline{TO}_{2k+1,n}(a_1,a_2,\dotsc,a_{k}))=\M(\overline{TO}_{2k+2,n}(a_1,a_2,\dotsc,a_k))
=\frac{1}{(2k)!}\overline{\E}(a_1,a_2,\dotsc,a_k)
\end{equation}
\end{theorem}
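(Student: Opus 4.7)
The plan is to copy, with the appropriate modifications, the two-step strategy announced in the abstract and used to prove Theorem~\ref{main}: first transform the dual graph of each barred quartered Aztec rectangle into the dual graph of a suitable quartered lozenge hexagon by a sequence of local subgraph replacements (urban renewal together with the star--triangle and vertex-splitting moves employed in \cite{Tri2}), and then evaluate the number of lozenge tilings of that hexagon by the Lindstr\"{o}m--Gessel--Viennot lemma. Before invoking any of this heavy machinery, the equality between consecutive indices $m$ asserted in each of \eqref{VReq2}--\eqref{VReq3} can be handled separately, exactly as in the unbarred case \eqref{main1}--\eqref{main4}: the extra row added when one passes from $m$ to $m+1$ is entirely forced, so removing its forced dominoes leaves a region combinatorially identical to the smaller one. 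In each of the four families it therefore suffices to treat a single representative, say $\overline{RE}_{2k,n}$, $\overline{RO}_{2k-1,n}$, $\overline{TE}_{2k,n}$, and $\overline{TO}_{2k+1,n}$.

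The main structural novelty compared with Theorem~\ref{main} is the single extra square removed at the bottom of the southwestern (respectively southeastern) side in the definition of the barred regions. Under the subgraph-replacement transformation this extra removal has the effect of shifting the source row of the LGV paths down by one lattice unit, which in turn shifts every boundary parameter of the path-count matrix. This shift is precisely what converts $a_i+a_j-1$ into $a_i+a_j$ in the product formulas \eqref{Eb} and \eqref{Ob}, while the extra factor $a_1 a_2 \dotsc a_k$ arises as a uniform rescaling of the rows of the path-count matrix coming from the new bottom vertex; the powers of $2$ in front of $\overline{\E}$ or $\overline{\Od}$ keep track of the signed weights accumulated by the subgraph-replacement moves.

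The main obstacle will be the closed-form evaluation of the resulting $k\times k$ determinant. I would try two routes. The first is to relate the barred hexagon directly to the unbarred one of Theorem~\ref{main} by splitting off the single extra bottom row of lozenges, which contributes a multiplicative factor of $a_1 a_2 \dotsc a_k$ times an explicit power of $2$, and then invoke Theorem~\ref{main} (or, more precisely, the determinantal identity produced by its proof) as a black box. The backup is a direct evaluation via a Vandermonde-type identity analogous to the one Jockusch and Propp use in \cite{JP} for the weighted sums of antisymmetric monotone triangles underlying $\E$ and $\Od$; the replacement of $\prod_{i\le j}(a_i+a_j-1)$ by $\prod_{i\le j}(a_i+a_j)$ requires only the obvious reindexing of the Pfaffian/Cauchy-type identity. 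Either way, the bulk of the technical work will be in matching the signed powers of $2$ and the triple factorials $0!2!\dotsc(2n-2)!$ and $1!3!\dotsc(2n-1)!$ in \eqref{Eb} and \eqref{Ob} against the normalizing constants produced by Lindstr\"{o}m--Gessel--Viennot, and in carefully bookkeeping the contributions of the forced strip removed in the first step.
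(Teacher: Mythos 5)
Your opening moves match the paper: the consecutive-index equalities in each of \eqref{VReq2}--\eqref{VReq3} are indeed disposed of by forced edges, and the family $\overline{TE}_{2k,n}$ is indeed handled by a subgraph-replacement chain down to the quartered hexagon $QH_{2k,n}$ followed by a Lindstr\"om--Gessel--Viennot determinant (this is Lemma \ref{QHex2} combined with \eqref{QHeq2}); the relation $\M(\overline{RE}_{2k,n})=2^k\M(\overline{TE}_{2k,n})$ then gives \eqref{VReq2} from \eqref{VReq4}, exactly as you suggest. However, there is a genuine gap in your plan for the remaining two families. The paper does \emph{not} transform $\overline{TO}_{2k+1,n}$ and $\overline{RO}_{2k-1,n}$ into hexagons at all: it applies Ciucu's Factorization Theorem to holey Aztec rectangle graphs ($\overline{AR}_{2k+1,2n}(S')$ and $AR_{2k,2n-1}(S'')$ for symmetric hole sets $S'$, $S''$), which factor as $2^k$ times the product of the matching counts of a $\overline{TO}$- and a $\overline{TE}$-region (respectively an $\overline{RO}$- and an $\overline{RE}$-region), and then divides by the already-known factor using the closed formulas of Lemmas \ref{lem4} and \ref{lem5}. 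Your proposal never mentions this symmetry-factorization step, and the direct route you describe instead is not available off the shelf: the hexagonal targets one would need for these parities are the \emph{weighted} regions $\overline{QH}$, whose vertical rhombi carry weight $1/2$, and even in the paper those are evaluated by another application of the Factorization Theorem (to semi-hexagons) rather than by a bare LGV computation. You would have to either build new replacement chains and carry out a weighted LGV evaluation, or import the factorization idea; as written, half of the theorem is unproved.

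A smaller but concrete error: you assert that the extra factor $a_1a_2\dotsc a_k$ in \eqref{Eb} and \eqref{Ob} arises from ``splitting off the single extra bottom row of lozenges.'' In the paper it arises algebraically, not geometrically: the $(i,j)$ entry of the path matrix for $QH_{2k,n}$ is $\binom{a_i+j-1}{2j-1}$, whose expanded product $(a_i-j+1)\dotsm(a_i+j-1)$ contains the factor $a_i$ itself, which is then factored out of each row before Krattenthaler's determinant identity (Lemma \ref{Klem}, with $C=0$ rather than $C=1$) is applied. Your intuition that the one-unit shift of the endpoints converts $a_i+a_j-1$ into $a_i+a_j$ is correct, but the $a_1\dotsm a_k$ is a row operation on the determinant, not the contribution of a forced or separable strip of tiles.
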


The paper is organized as follows. In Section 2, we use subgraph replacement method to ``transform" the dual graphs of quartered Aztec rectangles to the dual graphs of  new families of regions, which we call \textit{quartered hexagons}. In Section 3, we use the classical methodology of Lindstr\"{o}m-Gessel-Viennot to enumerate the tilings of quartered hexagons. Finally, Section 4 gives the proofs of Theorem \ref{main} and \ref{mainv}.

\section{Subgraph replacement rules and quartered hexagons}

A \textit{perfect matching} of a graph $G$ is a collection of disjoint edges so that each vertex of $G$ is incident to exactly one edge of the collection. The \textit{dual graph} of a region $R$ is the graph whose vertices are the fundamental regions in $R$ and whose edges connect precisely two fundamental regions sharing an edge. The tilings of a regions are in bijection with the perfect matchings of its dual graph. By this point of view, we still use the notation $\M(G)$ for the number of perfect matchings of a graph.

Similar to the case of regions with weighted tiles, we can generalize the definition of the operation $\M(G)$ to the case of weighted graph $G$ as follows. The \textit{weight of a perfect matching} is defined to be the product of the weights of all constituent edges.  The operation $\M(G)$ is now defined to be the sum of the weights of all perfect matchings in $G$, and is called the \textit{matching generating function} of $G$. If $G$ does not have any perfect matching, we let $\M(G):=0$. In the weighted case, each edge of the dual graph carries the weight of the corresponding tile of the region, so the bijection mentioned in the previous paragraph is now weight-preserved.

An edge in a graph $G$ is called a \textit{forced edge}, if it is in every perfect matching of $G$. Let $G$ be a weighted graph with weight function $\wt$ on its edges, and $G'$ is obtained from $G$ by removing forced edges $e_1,\dotsc,e_k$, and removing the vertices incident to those edges. Then one clearly has
\begin{equation*}
\M(G)=\M(G')\prod_{i=1}^k\wt(e_i).
\end{equation*}
From now on, whenever we remove some forced edges, we remove also the vertices incident to them. We have the following fact by considering forced edges.

\begin{lemma}\label{lem2} For any $1\leq k<n$ and $1\leq a_1<a_2<\dotsc<a_k\leq n$
\begin{equation}\label{eq5}
\M(RE_{2k-1,n}(a_1,\dotsc,a_k))=\M(RE_{2k,n}(a_1,\dotsc,a_k)),
\end{equation}
\begin{equation}\label{eq6}
\M(RO_{2k,n}(a_1,\dotsc,a_k))=\M(RO_{2k+1,n}(a_1,\dotsc,a_k)),
\end{equation}
\begin{equation}\label{eq7}
\M(TE_{2k,n}(a_1,\dotsc,a_{k}))=\M(TE_{2k+1,n}(a_1,\dotsc,a_{k})),
\end{equation}
\begin{equation}\label{eq8}
\M(TO_{2k-1,n}(a_1,\dotsc,a_{k}))=\M(TO_{2k,n}(a_1,\dotsc,a_{k})).
\end{equation}
\end{lemma}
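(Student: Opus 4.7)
The plan is the standard forced-edges reduction: in each of the four identities, I would exhibit a set of edges in the dual graph of the larger region that lie in every perfect matching, and check that deleting those edges together with their endpoints yields precisely the dual graph of the smaller region. Since the tilings are unweighted, each forced edge contributes a factor of $1$, so the two $\M$-values agree.

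Concentrate first on \eqref{eq5}. In passing from $RE_{2k-1,n}(a_1,\dots,a_k)$ to $RE_{2k,n}(a_1,\dots,a_k)$, the kept squares on the southwestern side are the same (the odd labels $1,3,\dots,2k-1$, since the newly added topmost square labelled $2k$ is even and hence removed), the southeastern side is unchanged, and the region grows only by an extra boundary strip along the top of the underlying Aztec rectangle. The absent square $2k$ creates a concave corner at the top-left of the larger region. The fundamental square bordering this concavity has a single available partner inside the new strip, forcing an edge of every matching; the next square in the strip is then left with a unique available partner, and a cascade of forced dominoes propagates across the top strip until it terminates at the top-right corner, where the (intact) northwestern and northeastern sides of the Aztec rectangle close it off. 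Removing these forced edges and their endpoints converts the dual graph of $RE_{2k,n}(a_1,\dots,a_k)$ into that of $RE_{2k-1,n}(a_1,\dots,a_k)$, giving \eqref{eq5}.

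The remaining identities \eqref{eq6}, \eqref{eq7}, \eqref{eq8} are handled identically. In each pair, the larger region differs from the smaller one by a single top strip added to the (trimmed, in the $T$-cases) Aztec rectangle, and a parity-induced concave corner at one end of that strip seeds a cascade of forced edges that tiles the strip uniquely; one then verifies that the residual graph is the dual graph of the smaller region. The only variation among the four cases is whether the concavity appears at the left or the right end of the strip (this is dictated by whether odd or even labels are being removed on the southwestern side) and whether the underlying skeleton is an Aztec rectangle or a trimmed Aztec rectangle; the cascade argument is the same in all four.

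The main obstacle, such as it is, is the bookkeeping: one must confirm that the cascade of forced edges fills exactly the extra strip and nothing more, and that the residual graph matches the dual graph of the smaller region on the nose. This is a routine but detail-heavy boundary inspection, and is most transparently presented by exhibiting the forced edges in a figure for one representative case, with the other three following by the obvious symmetry of the argument.
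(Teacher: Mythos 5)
Your proposal is correct and is essentially the paper's own argument: the paper likewise proves these identities by identifying the forced edges in the top strip of the dual graph of the larger region (illustrated in a figure for the first two cases) and observing that their removal leaves the dual graph of the smaller region, with the remaining cases "obtained similarly." Your cascade description just spells out the details the paper delegates to its figure.
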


\begin{figure}\centering
\includegraphics[width=10cm]{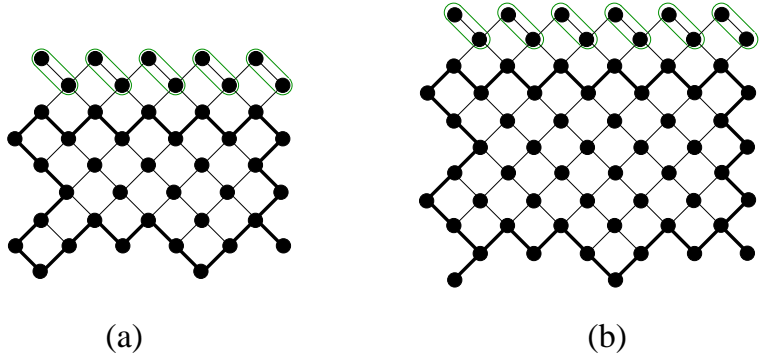}
\caption{ (a) Obtaining the dual graph of $RE_{3,5}(1,4)$ from the dual graph of $RE_{4,5}(1,4)$. (b) Obtaining the dual graph of $RO_{4,5}(1,4)$ from the dual graph of $RO_{5,5}(1,4)$. }
\label{force2}
\end{figure}

\begin{proof}
The proofs of the first two equalities are illustrated by Figures \ref{force2}(a) and (b), respectively; the forced edges are the circled ones on the top of the graphs. The last two equalities can be obtained similarly.
\end{proof}

Next, we will employ several basic preliminary results stated below.

\begin{figure}\centering
\begin{picture}(0,0)%
\includegraphics{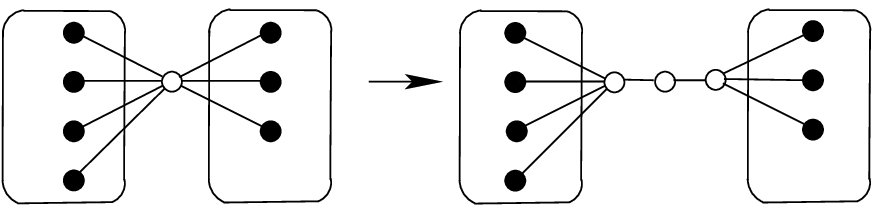}%
\end{picture}%
\setlength{\unitlength}{3947sp}%
\begingroup\makeatletter\ifx\SetFigFont\undefined%
\gdef\SetFigFont#1#2#3#4#5{%
  \reset@font\fontsize{#1}{#2pt}%
  \fontfamily{#3}\fontseries{#4}\fontshape{#5}%
  \selectfont}%
\fi\endgroup%
\begin{picture}(4188,1361)(593,-556)
\put(1336,591){\makebox(0,0)[lb]{\smash{{\SetFigFont{10}{12.0}{\familydefault}{\mddefault}{\updefault}{$v$}%
}}}}
\put(3549,621){\makebox(0,0)[lb]{\smash{{\SetFigFont{10}{12.0}{\familydefault}{\mddefault}{\updefault}{$v'$}%
}}}}
\put(3757, 89){\makebox(0,0)[lb]{\smash{{\SetFigFont{10}{12.0}{\familydefault}{\mddefault}{\updefault}{$x$}%
}}}}
\put(3999,621){\makebox(0,0)[lb]{\smash{{\SetFigFont{10}{12.0}{\familydefault}{\mddefault}{\updefault}{$v''$}%
}}}}
\put(820,-541){\makebox(0,0)[lb]{\smash{{\SetFigFont{10}{12.0}{\familydefault}{\mddefault}{\updefault}{$H$}%
}}}}
\put(1840,-535){\makebox(0,0)[lb]{\smash{{\SetFigFont{10}{12.0}{\familydefault}{\mddefault}{\updefault}{$K$}%
}}}}
\put(3031,-535){\makebox(0,0)[lb]{\smash{{\SetFigFont{10}{12.0}{\familydefault}{\mddefault}{\updefault}{$H$}%
}}}}
\put(4426,-484){\makebox(0,0)[lb]{\smash{{\SetFigFont{10}{12.0}{\familydefault}{\mddefault}{\updefault}{$K$}%
}}}}
\end{picture}%
\caption{Vertex splitting.}
\label{vertexsplitting}
\end{figure}

\begin{lemma} [Vertex-Splitting Lemma]\label{VS}
  Let $G$ be a graph, $v$ be a vertex of it, and denote the set of neighbors of $v$ by $N(v)$.
  For any disjoint union $N(v)=H\cup K$, let $G'$ be the graph obtained from $G\setminus v$ by including three new vertices $v'$, $v''$ and $x$ so that $N(v')=H\cup \{x\}$, $N(v'')=K\cup\{x\}$, and $N(x)=\{v',v''\}$ (see Figure \ref{vertexsplitting}). Then $\M(G)=\M(G')$.
\end{lemma}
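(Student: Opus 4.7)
My plan is to establish the identity by exhibiting an explicit weight-preserving bijection between the perfect matchings of $G$ and the perfect matchings of $G'$, and to read off the weight convention on $G'$ from the construction (namely, that the new edges $v'x$ and $v''x$ carry weight $1$, while each edge $v'h$ for $h\in H$ and each edge $v''k$ for $k\in K$ inherits the weight of the corresponding edge $vh$ or $vk$ in $G$). The key structural observation is that in $G'$ the vertex $x$ has exactly two neighbors, namely $v'$ and $v''$, so in every perfect matching $\mu'$ of $G'$ the edge incident to $x$ must be either $v'x$ or $v''x$; this splits the matchings of $G'$ into two disjoint classes, matching in a natural way the split of matchings of $G$ according to whether $v$ is matched into $H$ or into $K$ (using the disjoint decomposition $N(v)=H\sqcup K$).

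First I would handle the direction from $G'$ to $G$. If $\mu'$ contains the edge $v'x$, then $v'$ is already saturated, so $v''$ must be matched via $\mu'$ to some vertex $k\in K$; I would define $\Phi(\mu')$ to be the set of edges obtained from $\mu'$ by deleting the edges $v'x$ and $v''k$ and inserting the edge $vk$. The incident-vertex count is correct: the three vertices $v',v'',x$ and the vertex $k$ are replaced as a matched block by the single edge $vk$, and the remaining edges of $\mu'$ lie in the subgraph $G'\setminus\{v',v'',x\}=G\setminus\{v\}$. The symmetric case $v''x\in\mu'$ produces a matching of $G$ in which $v$ is matched to a vertex of $H$. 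Conversely, given a perfect matching $\mu$ of $G$, the vertex $v$ is matched to a unique neighbor $u$, which lies either in $H$ or in $K$; in the first case I would set $\Psi(\mu)$ to be $(\mu\setminus\{vu\})\cup\{v'u,v''x\}$, and in the second case $(\mu\setminus\{vu\})\cup\{v''u,v'x\}$. A direct check shows that $\Phi$ and $\Psi$ are mutually inverse, so they give a bijection.

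Finally, weight preservation is immediate under the weight convention above: under either $\Phi$ or $\Psi$, a single edge of weight $w=\wt(vu)$ is swapped for two edges whose weight-product is $w\cdot 1=w$, while all other edges are left untouched. Summing over matchings yields $\M(G)=\M(G')$. I do not anticipate a serious obstacle here; the only point that requires a touch of care is to notice that the disjointness $H\cap K=\varnothing$ is exactly what is needed so that the ``$x$-matched-to-$v'$'' and ``$x$-matched-to-$v''$'' classes partition the matchings of $G'$ without overlap or omission, mirroring the partition of matchings of $G$ according to whether $v$ is matched into $H$ or into $K$.
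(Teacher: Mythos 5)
Your proof is correct. The paper states the Vertex-Splitting Lemma as a standard preliminary fact and offers no proof of its own, so there is nothing to diverge from; your bijection (pairing the two classes of matchings of $G'$ according to whether $x$ is matched to $v'$ or to $v''$ with the two classes of matchings of $G$ according to whether $v$ is matched into $H$ or into $K$) is exactly the standard argument, and your explicit weight convention on the new edges is the one implicitly intended in the figure.
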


\begin{lemma}[Star Lemma]\label{star}
Let $G$ be a weighted graph, and let $v$ be a vertex of~$G$. Let $G'$ be the graph obtained from $G$ by multiplying the weights of all edges that are incident to $v$ by $t>0$. Then $\M(G')=t\M(G)$.
\end{lemma}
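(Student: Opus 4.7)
The plan is to argue directly from the definition of the matching generating function. The key point is that the underlying unweighted graphs of $G$ and $G'$ are identical; only the edge weights differ. Hence there is a canonical bijection between the perfect matchings of $G$ and those of $G'$ (each matching $\mu$ corresponds to itself), and it suffices to compare the weight of $\mu$ in the two graphs.

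Next I would use the fact that any perfect matching $\mu$ must saturate $v$ by exactly one edge, which I will call $e_\mu$. Since $\wt(e_\mu)$ is the unique factor in $\prod_{e\in\mu}\wt(e)$ that is rescaled when passing from $G$ to $G'$ — the remaining edges of $\mu$ cannot be incident to $v$, as $v$ is already saturated by $e_\mu$ — the weight of $\mu$ in $G'$ equals $t$ times its weight in $G$. Summing this identity over all perfect matchings of $G$ gives $\M(G')=t\,\M(G)$, as claimed.

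There is essentially no obstacle here: the Star Lemma is a bookkeeping identity, resting only on the two elementary facts that (i) every perfect matching meets $v$ in exactly one edge, and (ii) the weight of a matching factors as a product over its edges. No subgraph replacement, forced-edge argument, or auxiliary construction is required, in contrast to the Vertex-Splitting Lemma stated just above. The hypothesis $t>0$ plays no role in the algebra and is presumably included only to make sense of $G'$ as a weighted graph in the same framework used elsewhere in the paper.
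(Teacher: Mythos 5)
Your argument is correct and complete: since every perfect matching saturates $v$ by exactly one edge, exactly one factor in each matching's weight is rescaled by $t$, which gives $\M(G')=t\M(G)$. The paper states the Star Lemma as a known preliminary without proof, and yours is precisely the standard bookkeeping argument one would supply.
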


Part (a) of the following result is a generalization due to Propp of the ``urban renewal" trick first observed by Kuperberg. Parts (b) and (c) are due to Ciucu (see Lemma 2.6 in [5]).

\begin{figure}\centering
\begin{picture}(0,0)%
\includegraphics{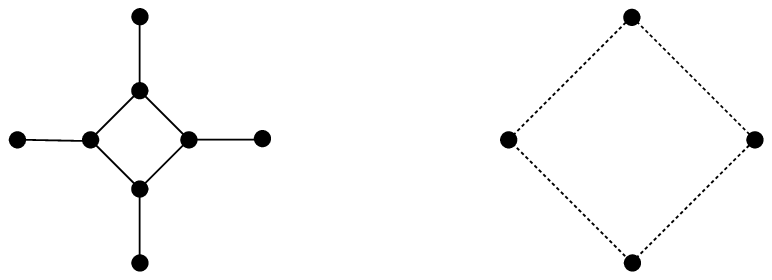}%
\end{picture}%
\setlength{\unitlength}{3947sp}%
\begingroup\makeatletter\ifx\SetFigFont\undefined%
\gdef\SetFigFont#1#2#3#4#5{%
  \reset@font\fontsize{#1}{#2pt}%
  \fontfamily{#3}\fontseries{#4}\fontshape{#5}%
  \selectfont}%
\fi\endgroup%
\begin{picture}(4054,1735)(340,-948)
\put(355,-156){\makebox(0,0)[lb]{\smash{{\SetFigFont{10}{12.0}{\familydefault}{\mddefault}{\updefault}{$A$}%
}}}}
\put(1064,-933){\makebox(0,0)[lb]{\smash{{\SetFigFont{10}{12.0}{\familydefault}{\mddefault}{\updefault}{$B$}%
}}}}
\put(1891,-106){\makebox(0,0)[lb]{\smash{{\SetFigFont{10}{12.0}{\familydefault}{\mddefault}{\updefault}{$C$}%
}}}}
\put(1182,603){\makebox(0,0)[lb]{\smash{{\SetFigFont{10}{12.0}{\familydefault}{\mddefault}{\updefault}{$D$}%
}}}}
\put(2717,-189){\makebox(0,0)[lb]{\smash{{\SetFigFont{10}{12.0}{\familydefault}{\mddefault}{\updefault}{$A$}%
}}}}
\put(3426,-933){\makebox(0,0)[lb]{\smash{{\SetFigFont{10}{12.0}{\familydefault}{\mddefault}{\updefault}{$B$}%
}}}}
\put(4253,-106){\makebox(0,0)[lb]{\smash{{\SetFigFont{10}{12.0}{\familydefault}{\mddefault}{\updefault}{$C$}%
}}}}
\put(3426,603){\makebox(0,0)[lb]{\smash{{\SetFigFont{10}{12.0}{\familydefault}{\mddefault}{\updefault}{$D$}%
}}}}
\put(904,-382){\makebox(0,0)[lb]{\smash{{\SetFigFont{10}{12.0}{\familydefault}{\mddefault}{\updefault}{$x$}%
}}}}
\put(1396,-388){\makebox(0,0)[lb]{\smash{{\SetFigFont{10}{12.0}{\familydefault}{\mddefault}{\updefault}{$y$}%
}}}}
\put(1418,130){\makebox(0,0)[lb]{\smash{{\SetFigFont{10}{12.0}{\familydefault}{\mddefault}{\updefault}{$z$}%
}}}}
\put(946,130){\makebox(0,0)[lb]{\smash{{\SetFigFont{10}{12.0}{\familydefault}{\mddefault}{\updefault}{$t$}%
}}}}
\put(2968,284){\makebox(0,0)[lb]{\smash{{\SetFigFont{10}{12.0}{\familydefault}{\mddefault}{\updefault}{$y/\Delta$}%
}}}}
\put(3934,311){\makebox(0,0)[lb]{\smash{{\SetFigFont{10}{12.0}{\familydefault}{\mddefault}{\updefault}{$x/\Delta$}%
}}}}
\put(3964,-544){\makebox(0,0)[lb]{\smash{{\SetFigFont{10}{12.0}{\familydefault}{\mddefault}{\updefault}{$t/\Delta$}%
}}}}
\put(2965,-526){\makebox(0,0)[lb]{\smash{{\SetFigFont{10}{12.0}{\familydefault}{\mddefault}{\updefault}{$z/\Delta$}%
}}}}
\put(2197,-817){\makebox(0,0)[lb]{\smash{{\SetFigFont{10}{12.0}{\familydefault}{\mddefault}{\updefault}{$\Delta= xz+yt$}%
}}}}
\end{picture}%
\caption{Urban renewal.}
\label{spider1}
\end{figure}

\begin{figure}\centering
\resizebox{!}{3.7cm}{
\begin{picture}(0,0)%
\includegraphics{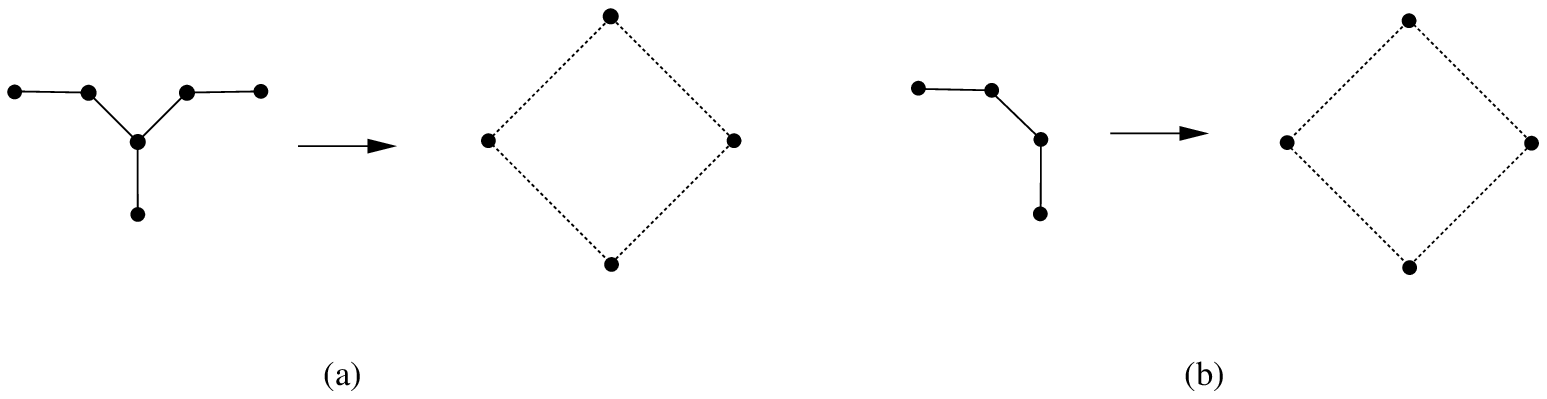}%
\end{picture}%
\setlength{\unitlength}{3947sp}%
\begingroup\makeatletter\ifx\SetFigFont\undefined%
\gdef\SetFigFont#1#2#3#4#5{%
  \reset@font\fontsize{#1}{#2pt}%
  \fontfamily{#3}\fontseries{#4}\fontshape{#5}%
  \selectfont}%
\fi\endgroup%
\begin{picture}(7605,2136)(222,-1352)
\put(237,-106){\makebox(0,0)[lb]{\smash{{\SetFigFont{10}{12.0}{\familydefault}{\mddefault}{\updefault}{$A$}%
}}}}
\put(828,-697){\makebox(0,0)[lb]{\smash{{\SetFigFont{10}{12.0}{\familydefault}{\mddefault}{\updefault}{$B$}%
}}}}
\put(1592,  0){\makebox(0,0)[lb]{\smash{{\SetFigFont{10}{12.0}{\familydefault}{\mddefault}{\updefault}{$C$}%
}}}}
\put(519,-249){\makebox(0,0)[lb]{\smash{{\SetFigFont{10}{12.0}{\familydefault}{\mddefault}{\updefault}{$x$}%
}}}}
\put(1164,-264){\makebox(0,0)[lb]{\smash{{\SetFigFont{10}{12.0}{\familydefault}{\mddefault}{\updefault}{$y$}%
}}}}
\put(2566,426){\makebox(0,0)[lb]{\smash{{\SetFigFont{10}{12.0}{\familydefault}{\mddefault}{\updefault}{$y/2$}%
}}}}
\put(3616,396){\makebox(0,0)[lb]{\smash{{\SetFigFont{10}{12.0}{\familydefault}{\mddefault}{\updefault}{$x/2$}%
}}}}
\put(2244,-714){\makebox(0,0)[lb]{\smash{{\SetFigFont{10}{12.0}{\rmdefault}{\mddefault}{\updefault}{$1/(2x)$}%
}}}}
\put(3646,-744){\makebox(0,0)[lb]{\smash{{\SetFigFont{10}{12.0}{\rmdefault}{\mddefault}{\updefault}{$1/(2y)$}%
}}}}
\put(2363,-129){\makebox(0,0)[lb]{\smash{{\SetFigFont{10}{12.0}{\familydefault}{\mddefault}{\updefault}{$A$}%
}}}}
\put(3131,600){\makebox(0,0)[lb]{\smash{{\SetFigFont{10}{12.0}{\familydefault}{\mddefault}{\updefault}{$D$}%
}}}}
\put(3830,-174){\makebox(0,0)[lb]{\smash{{\SetFigFont{10}{12.0}{\rmdefault}{\mddefault}{\updefault}{$C$}%
}}}}
\put(3119,-933){\makebox(0,0)[lb]{\smash{{\SetFigFont{10}{12.0}{\rmdefault}{\mddefault}{\updefault}{$B$}%
}}}}
\put(4489,249){\makebox(0,0)[lb]{\smash{{\SetFigFont{10}{12.0}{\rmdefault}{\mddefault}{\updefault}{$A$}%
}}}}
\put(5198,-814){\makebox(0,0)[lb]{\smash{{\SetFigFont{10}{12.0}{\rmdefault}{\mddefault}{\updefault}{$B$}%
}}}}
\put(6150,-115){\makebox(0,0)[lb]{\smash{{\SetFigFont{10}{12.0}{\rmdefault}{\mddefault}{\updefault}{$A$}%
}}}}
\put(6977,-942){\makebox(0,0)[lb]{\smash{{\SetFigFont{10}{12.0}{\rmdefault}{\mddefault}{\updefault}{$B$}%
}}}}
\put(7686,-115){\makebox(0,0)[lb]{\smash{{\SetFigFont{10}{12.0}{\rmdefault}{\mddefault}{\updefault}{$C$}%
}}}}
\put(6977,594){\makebox(0,0)[lb]{\smash{{\SetFigFont{10}{12.0}{\rmdefault}{\mddefault}{\updefault}{$D$}%
}}}}
\put(5251,179){\makebox(0,0)[lb]{\smash{{\SetFigFont{10}{12.0}{\rmdefault}{\mddefault}{\updefault}{$x$}%
}}}}
\put(6369,441){\makebox(0,0)[lb]{\smash{{\SetFigFont{10}{12.0}{\rmdefault}{\mddefault}{\updefault}{$1/2$}%
}}}}
\put(7501,-781){\makebox(0,0)[lb]{\smash{{\SetFigFont{10}{12.0}{\rmdefault}{\mddefault}{\updefault}{$1/2$}%
}}}}
\put(6046,-736){\makebox(0,0)[lb]{\smash{{\SetFigFont{10}{12.0}{\rmdefault}{\mddefault}{\updefault}{$1/(2x)$}%
}}}}
\put(7426,374){\makebox(0,0)[lb]{\smash{{\SetFigFont{10}{12.0}{\rmdefault}{\mddefault}{\updefault}{$x/2$}%
}}}}
\end{picture}}
\caption{Two variants of the urban renewal trick.}
\label{spider2}
\end{figure}

\begin{lemma} [Spider Lemma]\label{spider}
(a) Let $G$ be a weighted graph containing the subgraph $K$ shown on the left in Figure \ref{spider1} (the labels indicate weights, unlabeled edges have weight 1). Suppose in addition that the four inner black vertices in the subgraph $K$, different from $A,B,C,D$, have no neighbors outside $K$. Let $G'$ be the graph obtained from $G$ by replacing $K$ by the graph $\overline{K}$ shown on right in Figure \ref{spider1}, where the dashed lines indicate new edges, weighted as shown. Then $\M(G)=(xz+yt)\M(G')$.

(b) Consider the above local replacement operation when $K$ and $\overline{K}$ are graphs shown in Figure \ref{spider2}(a) with the indicated weights (in particular, $K'$ has a new vertex $D$, that is incident only to $A$ and $C$). Then $\M(G)=2\M(G')$.

(c) The statement of part (b) is also true when $K$ and $\overline{K}$ are the graphs indicated in Figure \ref{spider2}(b) (in this case $G'$ has two new vertices $C$ and $D$, they are adjacent only to one another and to $B$ and $A$, respectively).
\end{lemma}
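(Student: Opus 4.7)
The plan is to prove all three parts by a uniform \emph{boundary decomposition} argument. For any of the local replacements in the statement, the subgraph $K$ (resp.\ $\overline K$) meets the rest of $G$ only at the outer vertices $A,B,C,D$; let $V_{\mathrm{int}}$ denote the set of interior vertices of $K$ (those with no neighbour outside $K$), and let $K^\circ:=G\setminus V_{\mathrm{int}}$ denote the ``exterior.'' Each perfect matching of $G$ restricts to a matching of $K$ that covers every interior vertex together with some subset $T\subseteq\{A,B,C,D\}$ of the outer vertices, and to a matching of $K^\circ$ that covers every vertex of $K^\circ$ outside $T$. Grouping by $T$ gives
\begin{equation*}
\M(G)=\sum_{T\subseteq\{A,B,C,D\}}\mu_K(T)\,Z_{K^{\circ}}(T)
\qquad\text{and}\qquad
\M(G')=\sum_{T}\mu_{\overline K}(T)\,Z_{K^{\circ}}(T),
\end{equation*}
where $\mu_K(T)$ is the weighted sum over matchings of $K$ covering exactly $V_{\mathrm{int}}\cup T$, and $Z_{K^{\circ}}(T)$ is the matching generating function of the graph obtained from $K^\circ$ by deleting the vertices of $T$. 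Crucially, the exterior factor $Z_{K^\circ}(T)$ is identical for $G$ and $G'$, so the desired identity $\M(G)=c\,\M(G')$ follows as soon as we verify the purely local equalities $\mu_K(T)=c\,\mu_{\overline K}(T)$ for every admissible $T$.

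For part (a), the four inner vertices force $|T|$ to be even, leaving the eight cases $T=\emptyset$, the six two-element subsets of $\{A,B,C,D\}$, and $T=\{A,B,C,D\}$. In each case one enumerates the very few matchings of the inner $4$-cycle of $K$ (resp.\ of $\overline K$) that cover the prescribed outer vertices, and compares the resulting weighted counts. The point of the particular weights $y/\Delta,x/\Delta,z/\Delta,t/\Delta$ on the inner square of $\overline K$ is precisely that each of these eight comparisons yields the same multiplicative constant $c=xz+yt=\Delta$; for instance $\mu_K(\emptyset)=xz+yt$ while $\mu_{\overline K}(\emptyset)=(y/\Delta)(z/\Delta)+(x/\Delta)(t/\Delta)\cdot\Delta=1$, and similar one-term identities handle the remaining seven cases.

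For parts (b) and (c), the interior of $K$ has only two vertices (resp.\ one), so the same local analysis goes through with only a handful of cases, each of which reduces to checking $\mu_K(T)=2\,\mu_{\overline K}(T)$. A slicker alternative I would try first is to derive both (b) and (c) directly from (a) by using the Vertex-Splitting Lemma~\ref{VS} to introduce or collapse the auxiliary vertex $D$, followed by the Star Lemma~\ref{star} to absorb the leftover $1/2$ factors, thereby identifying the configuration with the standard urban-renewal square of part (a) specialized so that $xz+yt=2$. The main obstacle is just the careful bookkeeping in part (a) --- eight weight comparisons with the correct $1/\Delta$ normalizations on the inner edges of $\overline K$; no new combinatorial idea beyond the boundary decomposition is needed.
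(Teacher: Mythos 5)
The paper does not actually prove this lemma: it is quoted as a known preliminary, with part (a) attributed to Propp's generalization of Kuperberg's urban renewal and parts (b), (c) to Ciucu, so there is no proof of record to compare yours against. Judged on its own terms, your argument for part (a) is the standard and correct one: decompose each perfect matching of $G$ according to the subset $T$ of $\{A,B,C,D\}$ matched inside the patch, observe that the exterior generating functions $Z_{K^{\circ}}(T)$ are common to $G$ and $G'$, and check $\mu_K(T)=\Delta\,\mu_{\overline K}(T)$ case by case (only $T=\emptyset$, the four adjacent pairs, and $T=\{A,B,C,D\}$ are nonzero). Your illustrative computation for $T=\emptyset$ is garbled, though: $\mu_{\overline K}(\emptyset)$ is just $1$ (the empty matching, since $\overline K$ has no interior vertices) while $\mu_K(\emptyset)=xz+yt=\Delta$; the expression you wrote is an incorrect rendering of the $T=\{A,B,C,D\}$ case, where $\mu_{\overline K}=(xz+yt)/\Delta^2=1/\Delta$ against $\mu_K=1$.

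The genuine gap is in your treatment of parts (b) and (c). The claim that ``the same local analysis goes through'' with checks of the form $\mu_K(T)=2\,\mu_{\overline K}(T)$ cannot work there, for a parity reason: in part (b) the replacement adds exactly one vertex ($D$) to the patch, so the subsets $T$ with $\mu_K(T)\neq 0$ and those with $\mu_{\overline K}(T)\neq 0$ have opposite parities and never coincide. Concretely, for the path $A\,\text{--}\,B\,\text{--}\,C$ one gets $\M(G)=Z(\emptyset)+x\,Z(\{A,B\})+y\,Z(\{B,C\})$, whereas $2\M(G')=y\,Z(\{A\})+x\,Z(\{C\})+Z(\{A,B,C\})$; these are different linear combinations of the exterior quantities, and a similar mismatch of coefficient vectors occurs in part (c). So (b) and (c) are not context-free local replacements provable by termwise comparison; their validity rests on the ambient structure in which they are applied (all cells of a cellular graph being replaced simultaneously, with the boundary moves justified via Ciucu's complementation machinery, which is essentially your fallback suggestion of reducing to part (a) by vertex-splitting and the Star Lemma --- but that reduction is exactly the nontrivial content and would need to be carried out, not just named).
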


\begin{figure}\centering
\begin{picture}(0,0)%
\includegraphics{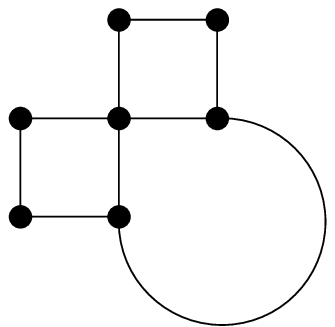}%
\end{picture}%
\setlength{\unitlength}{3947sp}%
\begingroup\makeatletter\ifx\SetFigFont\undefined%
\gdef\SetFigFont#1#2#3#4#5{%
  \reset@font\fontsize{#1}{#2pt}%
  \fontfamily{#3}\fontseries{#4}\fontshape{#5}%
  \selectfont}%
\fi\endgroup%
\begin{picture}(1725,1926)(931,-1107)
\put(1418, 12){\makebox(0,0)[lb]{\smash{{\SetFigFont{12}{14}{\rmdefault}{\mddefault}{\updefault}{$a$}%
}}}}
\put(1655,603){\makebox(0,0)[lb]{\smash{{\SetFigFont{12}{14}{\rmdefault}{\mddefault}{\updefault}{$b_1$}%
}}}}
\put(2245,603){\makebox(0,0)[lb]{\smash{{\SetFigFont{12}{14}{\rmdefault}{\mddefault}{\updefault}{$b_2$}%
}}}}
\put(2363, 12){\makebox(0,0)[lb]{\smash{{\SetFigFont{12}{14}{\rmdefault}{\mddefault}{\updefault}{$b_3$}%
}}}}
\put(969, 36){\makebox(0,0)[lb]{\smash{{\SetFigFont{12}{14}{\rmdefault}{\mddefault}{\updefault}{$c_1$}%
}}}}
\put(946,-696){\makebox(0,0)[lb]{\smash{{\SetFigFont{12}{14}{\rmdefault}{\mddefault}{\updefault}{$c_2$}%
}}}}
\put(1479,-819){\makebox(0,0)[lb]{\smash{{\SetFigFont{12}{14}{\rmdefault}{\mddefault}{\updefault}{$c_3$}%
}}}}
\end{picture}
\caption{Illustrating Lemma \ref{4cycle}.}
\label{4cyclelemma}
\end{figure}

\begin{lemma}[\cite{Ciucu1}, Lemma 4.2]\label{4cycle}
Let $G$ be a weighted graph having a $7$-vertex subgraph $H$ consisting of two $4$-cycles that share a vertex. Let $a$, $b_1$, $b_2$, $b_3$ and $a$, $c_1$, $c_2$, $c_3$ be the vertices of the 4-cycles (listed in cyclic order) and suppose $b_3$ and $c_3$ are only the vertices of $H$ with the neighbors outside $H$. Let $G'$ be the subgraph of $G$ obtained by deleting $b_1$, $b_2$, $c_1$ and $c_2$, weighted by restriction. Then if the product of weights of opposite edges in each $4$-cycle of $H$ is constant, we have
\[\M(G)=2wt(b_1,b_2)wt(c_1,c_2) \M(G').\]
\end{lemma}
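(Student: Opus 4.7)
The plan is to classify perfect matchings of $G$ by how they behave on the seven vertices of $H$, and in particular on the four ``interior'' vertices $b_1,b_2,c_1,c_2$, whose neighbors all lie inside $H$ (hence they must be matched by edges of $H$). Since $b_1$ is adjacent only to $a$ and $b_2$ inside $H$, it is matched either to $a$ or to $b_2$, and likewise for $c_1$. Because $a$ can be used only once, the induced partial matching on $H$ falls into exactly four configurations:
\begin{align*}
&\mathrm{(I)}\ (a,b_1),(b_2,b_3),(c_1,c_2),\ c_3\text{ matched outside};\\
&\mathrm{(II)}\ (a,c_1),(c_2,c_3),(b_1,b_2),\ b_3\text{ matched outside};\\
&\mathrm{(III)}\ (a,b_3),(b_1,b_2),(c_1,c_2),\ c_3\text{ matched outside};\\
&\mathrm{(IV)}\ (a,c_3),(c_1,c_2),(b_1,b_2),\ b_3\text{ matched outside}.
\end{align*}
Here the hypothesis that $b_3$ and $c_3$ are the only vertices of $H$ incident to outside edges is what forces $a$ to use one of its four neighbors inside $H$.

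Next I would pair these configurations by which of $b_3,c_3$ is matched externally. Configurations (I) and (III) both leave $c_3$ to be matched outside; together they contribute
\begin{equation*}
\bigl[\wt(a,b_1)\wt(b_2,b_3)+\wt(a,b_3)\wt(b_1,b_2)\bigr]\wt(c_1,c_2)\,Z_{c_3},
\end{equation*}
where $Z_{c_3}$ is the weighted sum of matchings of $G\setminus H$ in which $c_3$ is matched through one of its outside edges. Using the opposite-edge hypothesis in the $b$-cycle, $\wt(a,b_1)\wt(b_2,b_3)=\wt(a,b_3)\wt(b_1,b_2)$, so the bracket collapses to $2\,\wt(a,b_3)\wt(b_1,b_2)$. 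An analogous collapse occurs for (II)+(IV) via the $c$-cycle hypothesis.

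Finally, I would identify the resulting expressions with matchings of $G'=G\setminus\{b_1,b_2,c_1,c_2\}$. In $G'$ the vertex $a$ has only $b_3$ and $c_3$ as neighbors, so every perfect matching of $G'$ uses exactly one of the edges $(a,b_3)$ or $(a,c_3)$; the weighted counts of these two classes are precisely $\wt(a,b_3)\,Z_{c_3}$ and $\wt(a,c_3)\,Z_{b_3}$. Summing the two collapsed contributions yields
\begin{equation*}
\M(G)=2\,\wt(b_1,b_2)\wt(c_1,c_2)\bigl[\wt(a,b_3)Z_{c_3}+\wt(a,c_3)Z_{b_3}\bigr]=2\,\wt(b_1,b_2)\wt(c_1,c_2)\,\M(G'),
\end{equation*}
which is the desired identity.

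The argument is essentially a careful bookkeeping exercise, so there is no serious analytic obstacle; the only non-bureaucratic ingredient is recognizing that the opposite-edge constraint is exactly what lets configurations (I),(III) (respectively (II),(IV)) merge into a single term proportional to a single edge $(a,b_3)$ (respectively $(a,c_3)$) of $G'$. The one place that needs mild care is justifying that $Z_{c_3}$ (resp.\ $Z_{b_3}$) is unambiguous, which amounts to noting that $b_3$ and $c_3$ are unaffected by the removal of the four interior vertices, so the outside-matching contribution in $G$ coincides with the corresponding restricted matching count in $G'$.
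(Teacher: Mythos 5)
Your case analysis is correct: matching the shared vertex $a$ to each of its four neighbours $b_1,b_3,c_1,c_3$ forces the rest of $H$ uniquely, the opposite-edge condition $\wt(a,b_1)\wt(b_2,b_3)=\wt(b_1,b_2)\wt(a,b_3)$ (and its $c$-analogue) merges the paired configurations, and the resulting sum is exactly $2\wt(b_1,b_2)\wt(c_1,c_2)$ times the decomposition of $\M(G')$ according to whether $a$ is matched to $b_3$ or $c_3$. Note that the paper itself gives no proof of this statement --- it is quoted verbatim as Lemma 4.2 of Ciucu's paper on matchings of graphs with reflective symmetry --- so there is nothing to compare against; your direct verification is a valid, self-contained substitute for that citation.
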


By the above fundamental lemmas, we have the following fact.

\begin{lemma}\label{lem1} For any $1\leq k<n$ and $1\leq a_1<a_2<\dotsc<a_k\leq n$
\begin{equation}\label{eq1}
\M(RE_{2k-1,n}(a_1,\dotsc,a_k))=2^{k}\M(TO_{2k-1,n}(a_1,\dotsc,a_{k}))
\end{equation}
 and
\begin{equation}\label{eq2}
\M(RO_{2k,n}(a_1,\dotsc,a_k))=2^{k}\M(TE_{2k,n}(a_1,\dotsc,a_{k})).
\end{equation}
\end{lemma}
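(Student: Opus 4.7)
The plan is to transform the dual graph of $RE_{2k-1,n}(a_1,\dots,a_k)$ step by step into the dual graph of $TO_{2k-1,n}(a_1,\dots,a_k)$, accumulating a multiplicative factor of $2^k$ along the way. The second identity \eqref{eq2} will then follow by an entirely parallel argument, with the roles of ``even'' and ``odd'' squares on the southwestern boundary interchanged.

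First I would draw the dual graph of $RE_{2k-1,n}$ and focus on its northern strip, i.e.\ the portion of $\mathcal{AR}_{2k-1,n}$ that lies in the northwestern and northeastern bands which are removed when one passes to the trimmed rectangle $\mathcal{TR}_{2k-1,n}$. Along the top of this strip the dual graph has precisely $k$ local subgraphs of the type appearing on the left of Figure \ref{spider2}(a) (or of Figure \ref{spider2}(b)), separated by edges that will become forced after the first round of simplification; this is the combinatorial source of the exponent $k$. The rest of the graph south and east of this strip is already a subgraph of the dual graph of $TO_{2k-1,n}$, so no modification is needed there.

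Next I would apply the Spider Lemma (Lemma \ref{spider}(b) or \ref{spider}(c), according to the local shape) to each of these $k$ diamonds. Every application contributes a factor of $2$ and shrinks the graph by removing one cell in the northern strip. After all $k$ applications, together with a cleanup using the Vertex-Splitting Lemma, the Star Lemma, and removal of newly created forced edges in the spirit of the proof of Lemma \ref{lem2}, the resulting graph becomes exactly the dual graph of $TO_{2k-1,n}(a_1,\dots,a_k)$, and the cumulative scalar $2^k$ yields \eqref{eq1}. I would repeat the argument almost verbatim, starting from $RO_{2k,n}$ and ending at $TE_{2k,n}$, to obtain \eqref{eq2}.

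The most delicate step, and the one I would work out carefully on a small example first, is the parity bookkeeping: in $RE_{2k-1,n}$ the odd-labeled squares are retained on the southwestern side, while in $TO_{2k-1,n}$ it is the even-labeled squares that remain. I expect the $k$ urban renewal moves, combined with the chain of forced edges they propagate diagonally toward the southwestern boundary, to swap this parity automatically. Verifying this propagation on, say, $k=2$, and then inducting on $k$ by peeling off the top cell and applying the inductive hypothesis to the slightly smaller region, should complete the argument.
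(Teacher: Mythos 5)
Your toolkit is the right one (vertex splitting, the Spider Lemma, the Star Lemma, forced edges), but the mechanism you propose does not match how the two regions actually relate, and the gap is real. You claim that only $k$ local subgraphs in a ``northern strip'' need to be modified and that ``the rest of the graph south and east of this strip is already a subgraph of the dual graph of $TO_{2k-1,n}$.'' That is false: $RE_{2k-1,n}$ lives on the full Aztec rectangle $\mathcal{AR}_{2k-1,n}$ while $TO_{2k-1,n}$ lives on the trimmed rectangle $\mathcal{TR}_{2k-1,n}$ (northwestern \emph{and} northeastern border squares gone), and the two regions retain squares of opposite parity along the entire southwestern side. So the graphs differ globally, not just near the top. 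Hoping that forced edges ``propagate diagonally'' from $k$ local urban-renewal moves and ``swap this parity automatically'' is not an argument, and it cannot work: the passage from an Aztec rectangle graph to a trimmed one is the standard half-cell diagonal shift effected by urban renewal, and that shift must be applied to \emph{every} cell, not to $k$ of them.

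The paper's proof does exactly this: after vertex-splitting every vertex, it applies the Spider Lemma at all $(2k-1)n$ diamond cells and partial cells, producing a weighted copy $G'$ of the dual graph of $TO_{2k-1,n}$ in which every edge has weight $1/2$, and then applies the Star Lemma with $t=2$ at $(2k-1)n-k$ vertices to restore unit weights. The factor $2^k$ is therefore not a sum of $k$ isolated contributions of $2$; it is the residue
\begin{equation*}
2^{(2k-1)n}\cdot 2^{-\left((2k-1)n-k\right)}=2^{k}
\end{equation*}
left over after the global renormalization (equivalently, Ciucu's Complementation Theorem applied to the cellular completion, as the paper's Remark notes). If you want to salvage your outline, replace the ``northern strip'' localization with urban renewal on the whole graph and track the exponent as above; the induction on $k$ you sketch at the end is then unnecessary.
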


\begin{figure}\centering
\includegraphics[width=13.5cm]{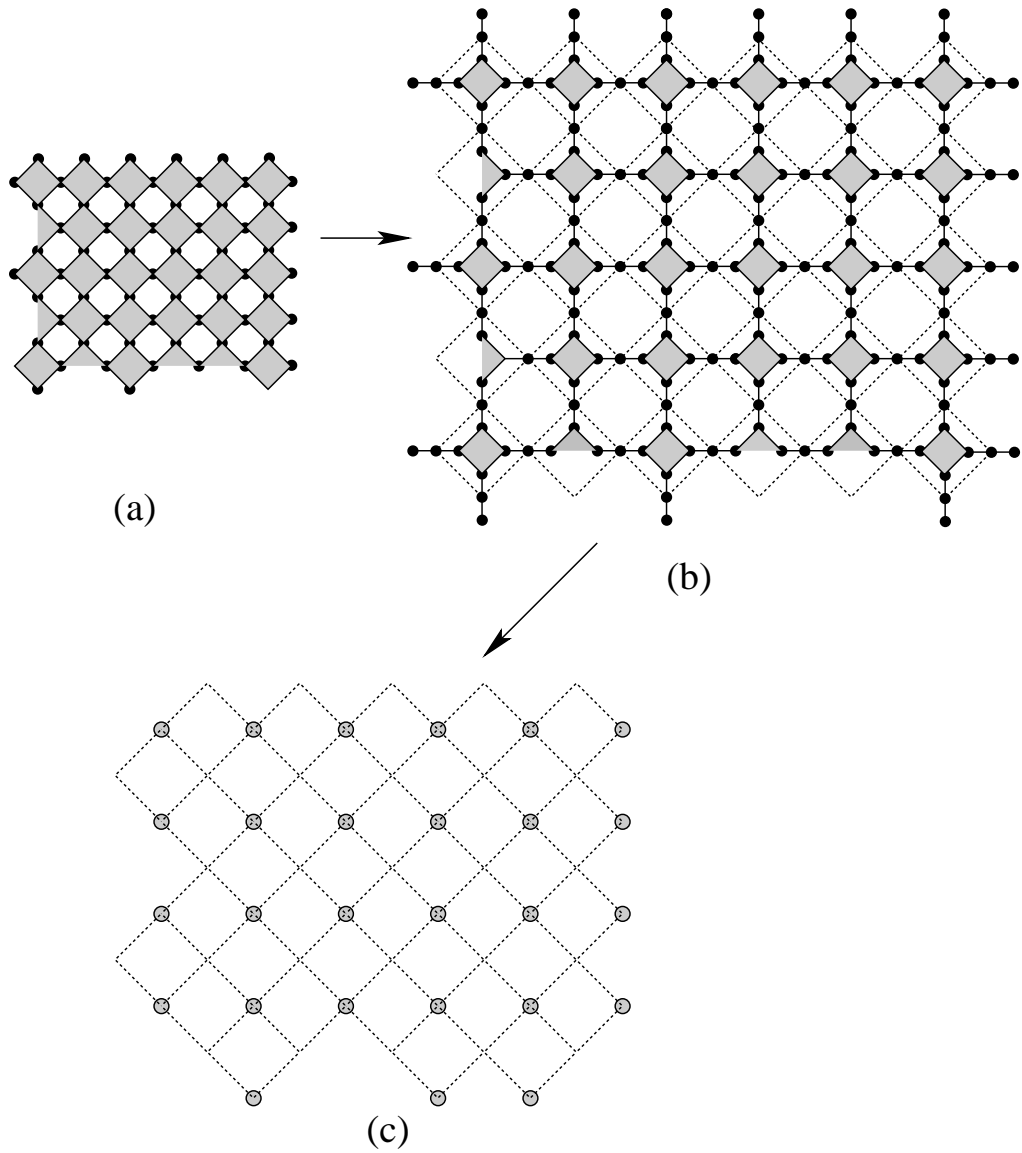}
\caption{Illustrating the proof of Lemma \ref{lem1}}
\label{TransformQR2}
\end{figure}

\begin{proof}
The proofs of (\ref{eq1}) and (\ref{eq2}) are essentially the same, so we present only the proof of (\ref{eq1}).

The proof of (\ref{eq1}) is illustrated in Figure \ref{TransformQR2}, for the case $k=3$, $n=6$, $a_1=1$, $a_2=3$, and $a_3=6$. First, we apply Vertex-splitting Lemma \ref{VS} to all vertices of the dual graph $G$ of $RE_{2k-1,n}(a_1,\dotsc,a_k)$ (see Figures \ref{TransformQR2}(a) and (b)). Second, apply the suitable replacements in Spider Lemma \ref{spider} at  $(2k-1)n$ diamond cells and partial cells with legs (they are replaced by dotted diamond with edge-weight $1/2$). Next, we removed all edges adjacent to a vertex of degree one (which are forced). We get a weighted version $G'$ of the dual graph of $TO_{2k-1,n}(a_1,\dotsc,a_{k})$, where all edges have weight $1/2$ (see Figures \ref{TransformQR2}(b) and (c)). Finally, we apply Star Lemma \ref{star} (with factor $t=2$) at all $(2k-1)n-k$ shaded vertices in the resulting graph, and get the dual graph $G''$ of $TO_{2k-1,n}(a_1,\dotsc,a_{k})$. By Lemmas \ref{VS}, \ref{star} and \ref{spider}, we obtain
\begin{equation}
\M(G)=2^{(2k-1)n}\M(G')=2^{(2k-1)n}2^{-(2k-1)n+k}\M(G'),
\end{equation}
which implies (\ref{eq1}).
\end{proof}

\begin{remark}
One can prove the equality (\ref{eq1}) by apply Ciucu's Complementation Theorem in \cite{Ciucu2}. We notice that \textit{cellular completion} (defined in \cite{Ciucu2}) of the dual graph of the region $RE_{2k-1,n}(a_1,\dotsc,a_k)$ is the graph $G'$ in the proof of Lemma \ref{lem1}. Moreover, each perfect matching of $G'$ consists of exactly $(2k-1)n-k$ edges of weight $1/2$, so \[\M(G')=2^{-(2k-1)n+k}\M(TO_{2k-1,n}(a_1,\dotsc,a_{k})).\]
\end{remark}

The \textit{connected sum} $G\#G'$ of two disjoint graphs $G$ and $G'$ along the ordered sets of vertices $\{v_1,\dotsc,v_n\}\subset V(G)$ and $\{v'_1,\dotsc,v'_n\}\subset V(G')$ is the graph obtained from $G$ and $G'$ by identifying vertices $v_i$ and $v'_i$, for $i=1,\dotsc,n$.
\begin{figure}\centering
\includegraphics[width=8cm]{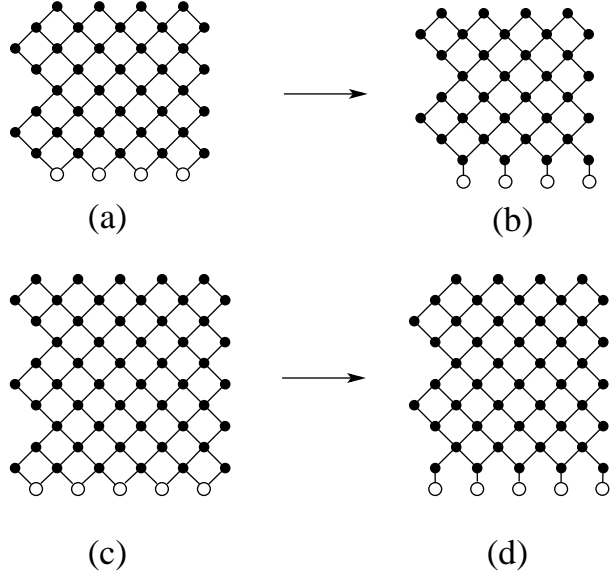}
\caption{Two transformations in Lemma \ref{transform}.}
\label{TransformQR}
\end{figure}

\begin{lemma}\label{transform}
Let $G$ be a graph, and let $\{v_1,v_2,\dotsc,v_n\}$ be an ordered subset of its vertex set.

(a) Assume that $K$ is the graph obtained from the dual graph of $\mathcal{TR}_{2q+1,n+1}$ by removing all bottommost vertices and the odd vertices on the leftmost column; $K'$ is obtained from the dual graph of $\mathcal{AR}_{2q,n}$ by removing all bottommost vertices and the odd vertices on the leftmost column (ordered from bottom to top), and appending $n$ vertical edges to the bottom of the resulting graph. Then
\begin{equation}\label{trans1}
\M(G\#K)=2^{q}\M(G\#K').
\end{equation}
The transformation is illustrated in Figures \ref{TransformQR}(a) and (b), for $q=2$ and $n=4$; the white circles indicate the vertices $\{v_1,v_2,\dotsc,v_n\}$.

(b) Assume $H$ is the graph obtained from the dual graph of  $\mathcal{AR}_{2q+1,n}$ by removing all even vertices on the leftmost column (ordered from bottom to top); $H'$ is the graph obtained from the dual graph of $\mathcal{TR}_{2q+1,n}$ by removing odd vertices on the leftmost column, and appending $n$ vertical edges to the bottom of the resulting graph. Then
\begin{equation}\label{trans2}
\M(G\#H)=2^{q}\M(G\#H').
\end{equation}
The transformation is illustrated in Figures \ref{TransformQR}(c) and (d), for $q=2$ and $n=5$; the white circles indicate the vertices $\{v_1,v_2,\dotsc,v_n\}$.

In the two equalities (\ref{trans1}) and (\ref{trans2}), the connected sum acts on $G$ along $\{v_1,v_2,\dotsc,v_n\}$ and acts on the other two summands along their bottom vertices (ordered from left to right).
\end{lemma}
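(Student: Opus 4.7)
The plan is to prove both parts (a) and (b) by local graph transformations applied entirely inside the summand $K$ (respectively $H$), leaving $G$ untouched. Since the connected sum is taken along the distinguished vertex set $\{v_1,\ldots,v_n\}$, and these interface vertices are preserved by every replacement rule below, any identity of the form $\M(K) = c\,\M(K')$ (in the weighted sense, where the weighted boundary vertices in $\{v_1,\ldots,v_n\}$ are treated as external) immediately gives $\M(G\#K)=c\,\M(G\#K')$. This reduces the problem to a purely local transformation on a structured piece of the Aztec grid, essentially the same kind of manipulation that drives the proof of Lemma \ref{lem1}.

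For part (a), I would first apply the Vertex-Splitting Lemma \ref{VS} to every interior vertex of $K$ that is not among $\{v_1,\ldots,v_n\}$, converting the dual graph of the trimmed Aztec rectangle into a graph composed of small diamond cells joined by bridges. Next, I would apply the urban renewal trick (Spider Lemma \ref{spider}(a)) to each of these diamond cells simultaneously, replacing them by weighted ``shrunken'' diamonds carrying the weight $1/2$ on every edge. Along the leftmost column of $K$, where the odd vertices have been removed, the resulting cells are non-standard, and here the variants in parts (b) and (c) of Lemma \ref{spider} must be invoked in place of the standard urban renewal. After these replacements, a number of degree-one vertices appear; removing the resulting forced edges yields a weighted copy of the graph $K'$, with pendant vertical edges at the bottom row accounting for the appended edges in the definition of $K'$. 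Finally, the Star Lemma \ref{star} is applied at all interior vertices of this weighted graph (with scaling factor $t=2$) to clear the $1/2$ edge weights, and the net multiplicative factor accumulated from the urban renewals and the star rescalings reduces to $2^q$ after cancellation.

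The proof of part (b) proceeds in exactly the same spirit, with the roles of ``trimmed'' and ``non-trimmed'' reversed and the removed parity on the leftmost column swapped from odd to even. Vertex-splitting, a single layer of urban renewal (using the appropriate boundary variants of Spider Lemma along the leftmost column), removal of forced edges, and a final Star Lemma rescaling transforms $H$ into a weighted version of $H'$, with the accumulated scalar again collapsing to $2^q$.

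The principal obstacle will be the boundary bookkeeping: one has to match the removed-vertex pattern on the leftmost column of $K$ (resp.\ $H$) with the correct variant of the Spider Lemma (\ref{spider}(a), (b), or (c)) at each cell of that column, and the same for the bottom row where the pendant edges of $K'$ and $H'$ arise. Once this is set up correctly, verifying that the exponent of $2$ collected from vertex splittings, the $qn$ (or comparable) urban renewal replacements, and the Star Lemma steps telescopes to exactly $2^q$ is a direct calculation analogous to the one carried out at the end of the proof of Lemma \ref{lem1}.
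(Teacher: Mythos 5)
Your overall strategy---vertex-splitting, urban renewal on the cells, removal of forced edges, and a final Star Lemma rescaling, all performed inside the summand $K$ while keeping the interface vertices fixed---matches the paper's, and the reduction to a local identity on $K$ is sound. However, there is a genuine gap: you never invoke Lemma \ref{4cycle}, and the Spider Lemma variants (b) and (c) alone do not dispose of the structures that arise along the leftmost column of $K$. Because the odd vertices of that column have been deleted, the renewal step there produces $q$ seven-vertex configurations, each consisting of two $4$-cycles sharing a vertex and attached to the rest of the graph at only two vertices. These are exactly the configurations that Lemma \ref{4cycle} is designed to collapse; without that step the graph you obtain after removing forced edges is not (a weighted version of) $K'$, since it still carries these $4$-cycle appendages on its left boundary.

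Moreover, the missing step is where part of the power of $2$ comes from: each application of Lemma \ref{4cycle} contributes a factor $2\cdot\frac12\cdot\frac12=\frac12$, and the correct accounting is $2^{2q(n+1)}$ from the $2q(n+1)$ spider replacements, $2^{-2qn}$ from the Star Lemma at the $2qn$ shaded vertices, and $2^{-q}$ from the $q$ applications of Lemma \ref{4cycle}, which multiply to the claimed $2^{q}$. Your assertion that the factor ``reduces to $2^q$ after cancellation'' cannot be verified from the steps you list, and with only the spider and star contributions the exponent does not come out to $q$. To repair the argument you need to add the $4$-cycle reduction, and its contribution to the scalar, explicitly.
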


\begin{figure}\centering
\includegraphics[width=13.5cm]{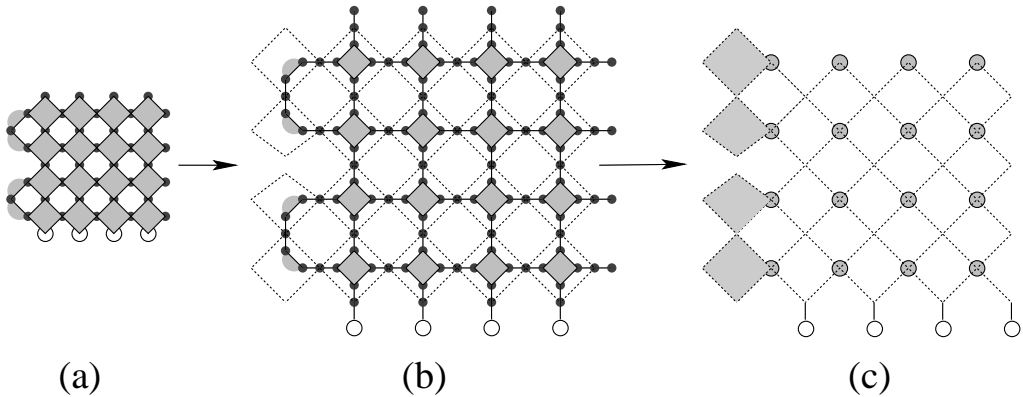}
\caption{Illustrating the proof of Lemma \ref{transform}(a).}
\label{TransformQR1}
\end{figure}

\begin{proof}
Since the proofs of parts (a) and (b) are essentially the same, the proof of part (b) is omitted.

The illustration of the proof of part (a) is shown in Figure \ref{TransformQR1}, for $q=2$ and $n=4$. First, we apply Vertex-splitting Lemma \ref{VS} to all vertices of $K$. We get the graph with solid edges in Figure \ref{TransformQR1}(b). Second, apply the suitable replacements in Spider Lemma \ref{spider}  to  $2q(n+1)$ diamond cells and partial cells with legs in the resulting graph, and remove all edges adjacent to a vertex of degree 1 (which are forced). We get the graph in the Figure \ref{TransformQR1}(c), the dotted edges are weighted by 1/2.  Third, apply Lemma \ref{4cycle} to all $q$ 7-vertex subgraphs consisting of two shaded 4-cycles, and apply Star Lemma \ref{star} with factor $t=2$ at all $2qn$ shaded vertices as in Figure \ref{TransformQR1}(c). We get finally the graph $G\#K'$. By Lemmas \ref{VS}, \ref{star}, \ref{spider} and \ref{4cycle}, we obtain
\[\M(G\#K)=2^{2q(n+1)}2^{-2qn}2^{q}\M(G\#K'),\]
which implies (\ref{trans1}).
\end{proof}

\bigskip

Denote by $H_{a,b,c}$ the (lozenge) hexagon of sides $a,b,c,a,b,c$ (in cyclic order, starting from the northwestern side) in the triangular lattice. In the spirit of quartered Aztec rectangles, we introduce four new families of regions, which we call \textit{quartered hexagons}, that will play a key role in our proofs of Theorems \ref{main} and \ref{mainv}.

Divide the hexagon $H_{m,2(n-k)+1,m}$, where $k=\lfloor\frac{m+1}{2}\rfloor$, into four equal parts by its vertical and horizontal symmetry axes (see Figure \ref{QHnew} for an example). We consider the portion of of the hexagon that consists of unit triangles lying completely inside the upper right quarter. Remove the $a_1$-st, the $a_2$-nd, $\dotsc,$ and the  $a_k$-th up-pointing unit triangles (ordered from left to right) from the bottom of the portion. Denote by $QH_{m,n}(a_1,a_2,\dotsc,a_k)$ the resulting region. See the region restricted in the bold contour in Figure \ref{QHnew} for an example with $k=7$, $m=13$, $n=12$, $a_1=2$, $a_2=3$, $a_3=5$, $a_4=7$, $a_5=8$, $a_6=10$, $a_7=12$; and  Figure \ref{QH}(a) shows an example, for $k=6$, $m=12$, $n=11$, $a_1=2$, $a_2=3$, $a_3=5$, $a_4=6$, $a_5=8$, $a_6=11$.

Next, we consider the variant  quartered hexagon obtained from $QH_{2k,n}(a_1,a_2,\dotsc,a_k)$ obtained by assigning all $k$ vertical rhombus on its left side a weight 1/2 (see Figure \ref{QH}(c)). Denote the resulting region by  $\overline{QH}_{2k,n}(a_1,a_2,\dotsc,a_k)$.
 We consider another variant of quartered hexagons as follows. We assign all $k-1$ vertical rhombus on the left side of  upper right quarter of the hexagon $H_{m,2(n-k)+1,m}$  a weight $1/2$, and remove the leftmost up-pointing unit triangle from the bottom of the region. Next, we remove the $a_1$-st, the $a_2$-nd, $\dotsc,$ and the  $a_{k-1}$-th up-pointing unit triangles from the bottom of the resulting region. The new region is denoted by $\overline{QH}_{2k-1,n}(a_1,a_2,\dotsc,a_{k-1})$ (illustrated in Figure \ref{QH}(b)).

\begin{figure}\centering
\includegraphics[width=11cm]{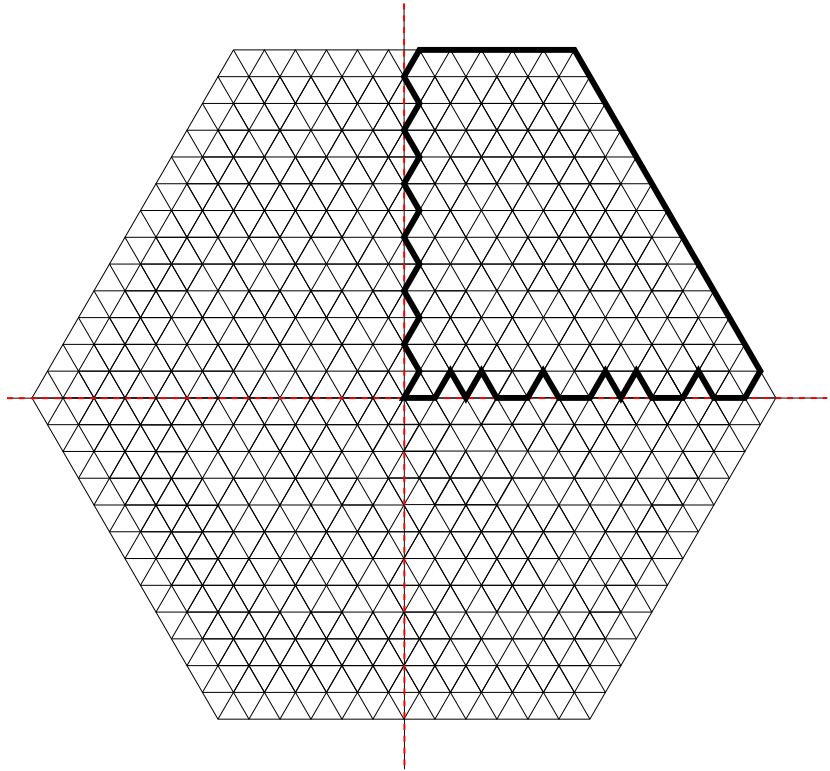}
\caption{The hexagon $H_{13,11,13}$ and the region $QH_{13,12}(2,3,5,7,8,10,12)$ (restricted by the bold contour). }
\label{QHnew}
\end{figure}

\begin{figure}\centering
\includegraphics[width=12cm]{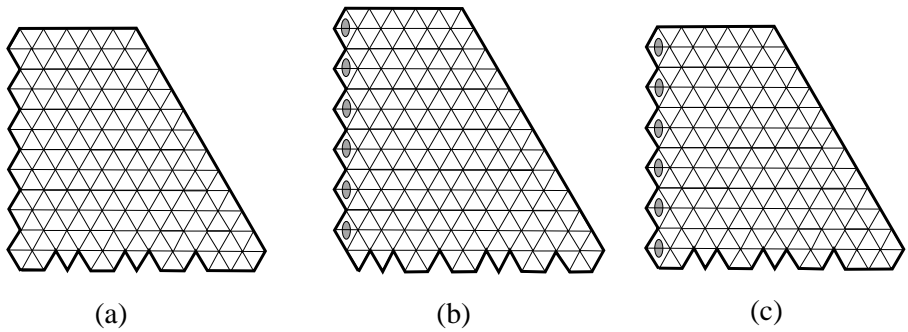}
\caption{Three quartered hexagons:  $(a)\quad QH_{12,11}(2,3,5,6,8,11)$, $(b)\quad \overline{QH}_{13,12}(1,4,6,7,9,11)$, and $(c)\quad \overline{QH}_{12,11}(2,3,5,6,8,11)$. }
\label{QH}
\end{figure}

The connection between the numbers of tilings of quartered Aztec rectangles and quartered hexagons is given by the following lemma.
\begin{lemma}\label{QHex2} For $1\leq k<n$ and $1\leq a_1<a_2<\dotsc<a_k\leq n$
\begin{equation}\label{eq3}
\M(TO_{2k-1,n}(a_1,a_2,\dotsc,a_{k}))=2^{k(k-1)}\M(QH_{2k-1,n}(a_1,a_2,\dotsc,a_k))
\end{equation}
and
\begin{equation}\label{QHex3}
\M( \overline{TE}_{2k,n}(a_1,a_2,\dotsc,a_k))=2^{k^2}\M(QH_{2k,n}(a_1,a_2,\dotsc,a_k)).
\end{equation}
\end{lemma}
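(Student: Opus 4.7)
My plan is to prove both equalities by the same subgraph replacement strategy, transforming the dual graph of the region on the left-hand side into the dual graph of the corresponding quartered hexagon via iterated applications of Lemma~\ref{transform}, together with the basic local replacement lemmas. Equality (\ref{eq3}) is the main case; equality (\ref{QHex3}) will be handled analogously with minor bookkeeping adjustments.

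To prove (\ref{eq3}), I would first identify the top portion of the dual graph of $TO_{2k-1,n}(a_1,\dots,a_k)$ with the subgraph $K$ of Lemma~\ref{transform}(a) for an appropriate value of $q$. Applying this lemma will strip off the topmost row of the trimmed Aztec structure, replacing it with a row of ``vertical edges'' (which will form the uppermost row of horizontal lozenges in the growing quartered hexagon) sitting below an Aztec-rectangle-shaped piece one row shorter, at a cost of a factor of $2^q$. Next I would apply Lemma~\ref{transform}(b) to this new Aztec piece, converting it back into a trimmed Aztec piece of smaller height and gaining another factor of $2^q$. Alternating parts~(a) and~(b) of Lemma~\ref{transform} with the appropriate decreasing sequence of $q$-values, the Aztec/trimmed-Aztec structure will be consumed row by row from the top while the hexagonal piece grows from below, and after on the order of $k$ iterations the entire graph will have become the dual graph of $QH_{2k-1,n}(a_1,\dots,a_k)$. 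Throughout the procedure the removed squares at the bottom at positions $a_1,\dots,a_k$ remain fixed, matching the removed up-pointing unit triangles of $QH$.

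For equality (\ref{QHex3}), the same strategy applied to $\overline{TE}_{2k,n}(a_1,\dots,a_k)$ will produce the dual graph of $QH_{2k,n}(a_1,\dots,a_k)$; the extra trimmed southeastern column and removed bottommost square in the definition of $\overline{TE}$ will contribute an additional factor of $2^k$ over the course of the iterations, raising the total exponent from $k(k-1)$ in the $TO$ case to $k^2$.

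The hard part will be the detailed bookkeeping: (i) verifying that after each application of Lemma~\ref{transform} the shape of the resulting graph exactly matches the hypotheses of the next application, so that the iterations compose correctly; (ii) tracking the positions of the removed triangles $a_1,\dots,a_k$ through each local replacement; and (iii) checking that the accumulated powers of $2$ coming from Vertex-splittings, Spider Lemma replacements, Star Lemma applications, uses of Lemma~\ref{4cycle}, and the iterated invocations of Lemma~\ref{transform} telescope exactly to $2^{k(k-1)}$ (respectively $2^{k^2}$). A base-case verification at $k=1$, where the quartered hexagon has very few cells and the identity can be checked directly, will also be required to anchor the iteration.
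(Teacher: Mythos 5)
Your proposal follows essentially the same route as the paper: the paper proves (\ref{eq3}) by a $(2k-2)$-step process alternating Lemma \ref{transform}(a) and (b) with $q=k-1,\dotsc,1$ (starting with part (a)), so the factor is $2^{\sum_{i=1}^{k-1}(i+i)}=2^{k(k-1)}$, and proves (\ref{QHex3}) by the analogous $(2k-1)$-step process starting with part (b), yielding $2^{k^2}$. Your outline matches this, including the deferral of the shape-matching and exponent bookkeeping to a verification against the figures.
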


\begin{figure}\centering
\includegraphics[width=13.6cm]{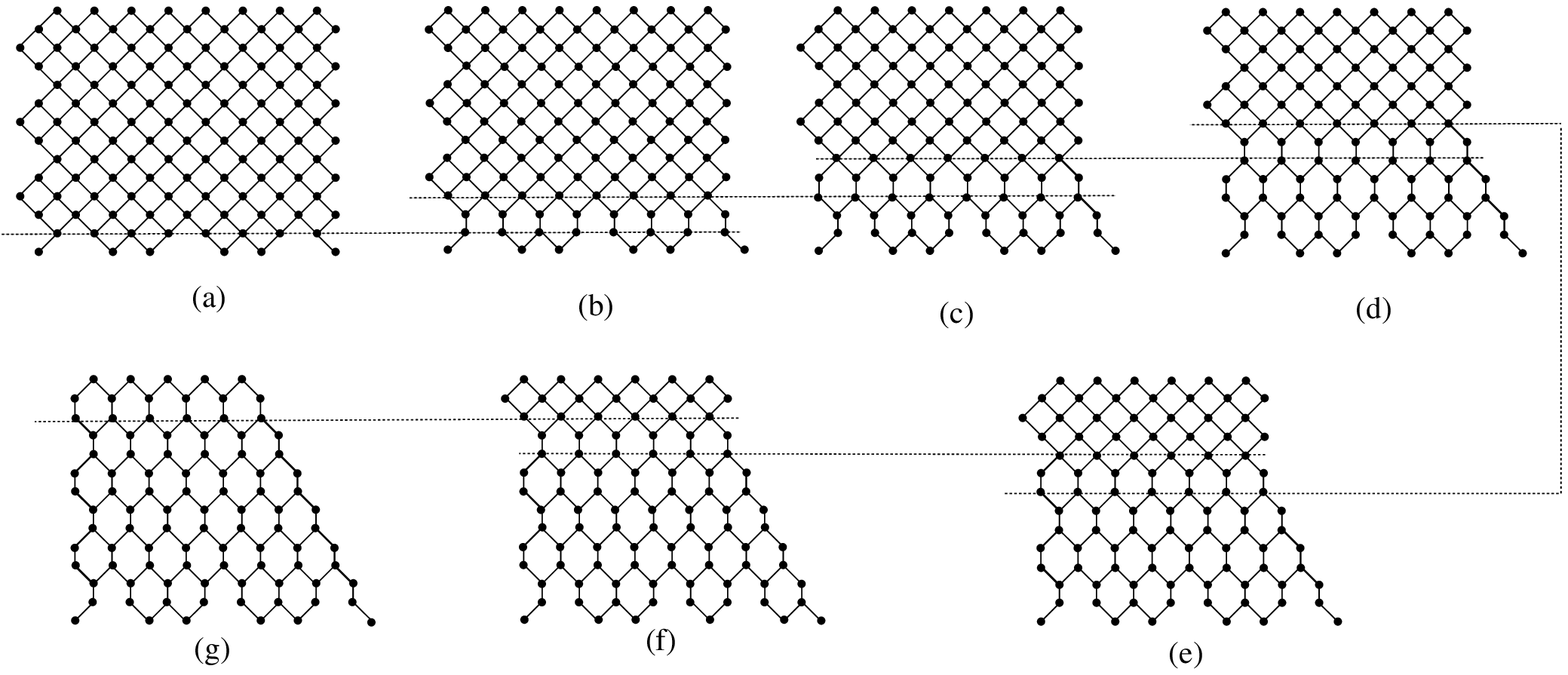}
\caption{Illustrating the proof of Theorem \ref{QHex2}.}
\label{TransformQH2}
\end{figure}

\begin{proof}
We prove the equality (\ref{eq3}) first.
We use the $(2k-2)$-step transforming process consisting alternatively the transformation  in Lemma \ref{transform} parts (a) and (b), for $q=k-1,\dotsc,2,1$, and starting by the transformation in part (a),
to transform the dual graph of $TO_{2k-1,n}(a_1,a_2,$ $\dotsc,a_{k})$ to the dual  graph  of $QH_{k,n}(a_1,a_2,\dotsc,a_k)$  (illustrated  in Figures \ref{TransformQH2}(a)--(g); the part above the top dotted line in a graph is replaced by the part above that line in the next graph).  By Lemma \ref{transform}, we get
\begin{equation}
\frac{\M(TO_{2k-1,n}(a_1,a_2,\dotsc,a_{k}))}{\M(QH_{2k-1,n}(a_1,a_2,\dotsc,a_k))}=2^{\sum_{i=1}^{k-1}(i+i)}=2^{k(k-1)},
\end{equation}
which implies (\ref{eq3}).

Similarly, we can get (\ref{QHex3}) by using a $(2k-1)$-step transforming process consisting alternatively the transformations in Lemma \ref{transform} parts (b) and (a), and starting by the transformation in part (b) to transform the dual graph of the region on the left hand side to dual graph of the region on the right-hand side.
\end{proof}

\section{Enumeration of tilings of quartered hexagons}
The main goal of this section is to enumerate the tilings of quartered hexagons using Lindstr\"{o}m-Gessel-Viennot  methodology.

The numbers of tilings of quartered hexagons are given by the following theorem.
\begin{theorem}\label{QHex} For any $1\leq k<n$ and $1\leq a_1<a_2<\dotsc<a_k\leq n$
\begin{equation}\label{QHeq1}
\M(QH_{2k-1,n}(a_1,a_2,\dotsc,a_k))=2^{-k^2}\E(a_1,a_2,\dotsc,a_k),
\end{equation}
\begin{equation}\label{QHeq2}
\M(QH_{2k,n}(a_1,a_2,\dotsc,a_k))=2^{-k^2}\overline{\Od}(a_1,a_2,\dotsc,a_k),
\end{equation}
\begin{equation}\label{QHeq3}
\M(\overline{QH}_{2k+1,n}(a_1,a_2,\dotsc,a_k))=\frac{2^{-k(k+1)}}{(2k)!}\overline{\E}(a_1,a_2,\dotsc,a_k),
\end{equation}
\begin{equation}\label{QHeq4}
\M(\overline{QH}_{2k,n}(a_1,a_2,\dotsc,a_k))=2^{-k(k+1)}\Od(a_1,a_2,\dotsc,a_k).
\end{equation}
\end{theorem}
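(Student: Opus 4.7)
The plan is to use the Lindström–Gessel–Viennot (LGV) lemma to express each $\M(QH_{*,n}(\mathbf{a}))$ as a $k \times k$ determinant, and then evaluate that determinant to recognize the product formulas on the right-hand sides of (\ref{QHeq1})--(\ref{QHeq4}). I would treat $\M(QH_{2k-1,n}(a_1,\dots,a_k))$ in detail first and then describe the (essentially cosmetic) modifications needed for the other three cases.

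First, I would invoke the classical bijection between lozenge tilings of a simply-connected region in the triangular lattice and families of non-intersecting lattice paths, with the two tilted orientations of lozenges providing the two step directions. For $QH_{2k-1,n}(a_1,\dots,a_k)$ this bijection yields exactly $k$ paths: the starting points along the bottom boundary are determined by the positions $a_1<\dots<a_k$ of the surviving up-pointing unit triangles, while the endpoints occupy a fixed set of $k$ positions along the slanted upper-right boundary depending only on $k$ and $n$. The LGV lemma then gives
\begin{equation*}
\M(QH_{2k-1,n}(a_1,\dots,a_k)) \;=\; \det\bigl[M_{i,j}\bigr]_{i,j=1}^{k},
\end{equation*}
where each $M_{i,j}$ is a single binomial coefficient counting lattice paths from source $j$ to sink $i$.

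Second, I would evaluate this determinant. Standard row operations and polynomial manipulations reduce it to a Vandermonde-type determinant in $a_1,\dots,a_k$: the factor $\prod_{i<j}(a_j-a_i)$ emerges from the usual antisymmetry argument, and the companion factor $\prod_{i<j}(a_i+a_j-1)$ appearing in $\E$ arises from a second antisymmetrization reflecting the mirror symmetry of the original hexagon across the horizontal axis. Collecting the remaining scalar factors should yield precisely $\tfrac{1}{0!\,2!\cdots(2k-2)!}$ in front, so that the determinant equals $2^{-k^2}\E(a_1,\dots,a_k)$ as required by (\ref{QHeq1}).

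The three remaining cases follow the same template with small adjustments. Changing the parity of $m$ alters the shape of the left boundary of the upper-right quarter, shifting the starting-point structure by one and converting $\prod_{i<j}(a_i+a_j-1)$ into $\prod_{i\leq j}(a_i+a_j-1)$ (in $\Od$) or $\prod_{i<j}(a_i+a_j)$ (in $\overline{\Od}$). For the barred regions $\overline{QH}$, the weight $1/2$ assigned to each vertical rhombus on the leftmost column multiplies one column of the LGV matrix by a uniform factor of $1/2$; combined with the removal of the leftmost up-pointing triangle in $\overline{QH}_{2k+1,n}$, this produces the extra multiplicative factor $a_1 a_2 \cdots a_k$ appearing in $\overline{\E}$ and $\overline{\Od}$. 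The main obstacle will be the determinant evaluation itself: verifying that the chosen binomial-coefficient matrices factor cleanly into the twin products $\prod_{i<j}(a_j-a_i)$ and the appropriate second symmetric product in each of the four parity cases, together with the correct powers of $2$ and the correct double-factorial denominators. This evaluation is of essentially the same type used by Jockusch and Propp \cite{JP} for antisymmetric monotone triangles, so once it is carried out in one case the other three follow by the same argument with shifted starting points or rescaled columns.
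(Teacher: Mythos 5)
For (\ref{QHeq1}) and (\ref{QHeq2}) your plan coincides with the paper's: tilings are encoded as $k$-tuples of non-intersecting lattice paths from $u_i=(a_i-1,0)$ to $v_j=(j-1,2j-2)$ (resp.\ $v_j=(j-1,2j-1)$), Lindstr\"{o}m--Gessel--Viennot produces the binomial determinants $\det\bigl[\binom{a_i+j-2}{2j-2}\bigr]$ and $\det\bigl[\binom{a_i+j-1}{2j-1}\bigr]$, and these are evaluated in closed form. (The paper does the evaluation by reversing the column order and invoking Krattenthaler's identity, Lemma \ref{Klem}; your appeal to ``a second antisymmetrization'' is not yet an argument, but some such determinant identity will close that step.)

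The genuine gap is in your treatment of the weighted regions in (\ref{QHeq3}) and (\ref{QHeq4}). Assigning weight $1/2$ to the $k$ vertical lozenge positions along the left edge of $\overline{QH}_{2k,n}$ does \emph{not} amount to multiplying a column of the LGV matrix by a uniform factor of $1/2$: a tiling may use any subset of those weighted positions, so the number of weight-$1/2$ lozenges varies from tiling to tiling and the matrix entries become genuinely weighted path counts rather than rescaled binomial coefficients. That no uniform rescaling can work is visible from the answers themselves: $QH_{2k,n}$ and $\overline{QH}_{2k,n}$ are the same region with different weights, yet (\ref{QHeq2}) carries the factor $a_1\cdots a_k\prod_{i<j}(a_i+a_j)$ while (\ref{QHeq4}) carries $\prod_{i\leq j}(a_i+a_j-1)$, and their ratio is not a power of $2$. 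The paper avoids this difficulty entirely by a different route for these two cases: it applies Ciucu's Factorization Theorem to the dented semi-hexagons $SH_{2k,2n}(S)$ and $SH_{2k+1,2n+1}(S')$, whose two halves are precisely $QH_{2k-1,n}$ and $\overline{QH}_{2k,n}$ (resp.\ $QH_{2k,n}$ and $\overline{QH}_{2k+1,n}$), and then divides the Cohn--Larsen--Propp product of Lemma \ref{semihex} by the already-established (\ref{QHeq1}) and (\ref{QHeq2}). You would need either to adopt such a factorization step or to set up an honestly weighted LGV computation; as written, the proposal does not prove (\ref{QHeq3}) or (\ref{QHeq4}).
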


We consider a useful factorization theorem due to Ciucu \cite{Ciucu1}, which we will employ in the proof of Theorem \ref{QHex}.

\begin{figure}\centering
\begin{picture}(0,0)%
\includegraphics{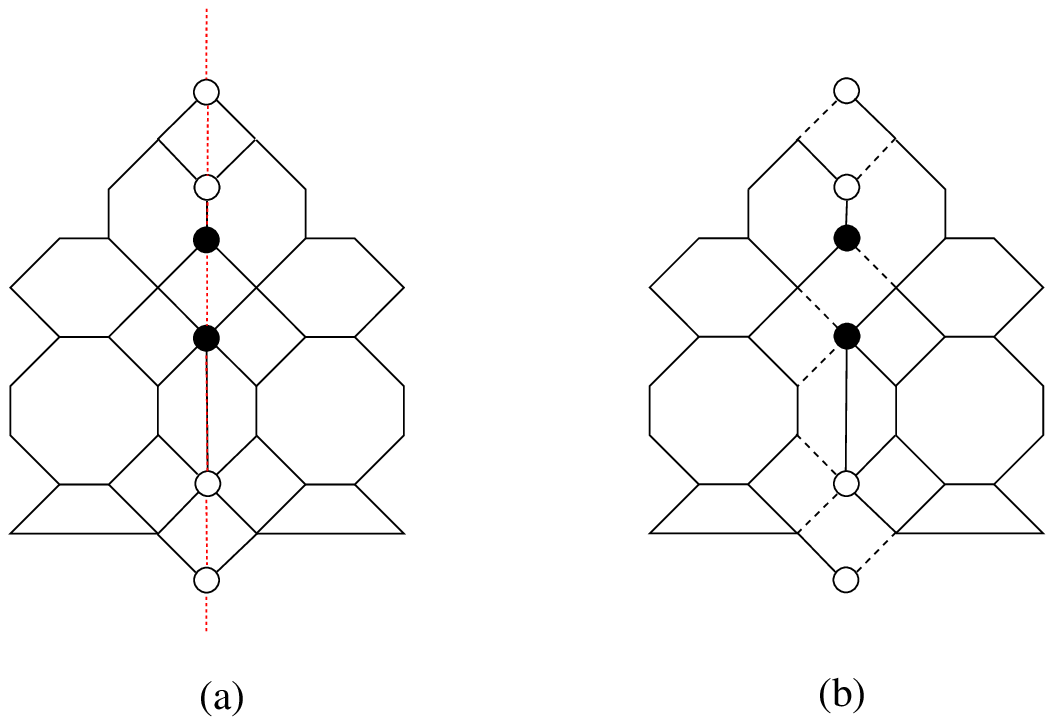}%
\end{picture}%
\setlength{\unitlength}{3947sp}%
\begingroup\makeatletter\ifx\SetFigFont\undefined%
\gdef\SetFigFont#1#2#3#4#5{%
  \reset@font\fontsize{#1}{#2pt}%
  \fontfamily{#3}\fontseries{#4}\fontshape{#5}%
  \selectfont}%
\fi\endgroup%
\begin{picture}(5586,3473)(1111,-2672)
\put(5788,-224){\makebox(0,0)[lb]{\smash{{\SetFigFont{12}{14}{\rmdefault}{\mddefault}{\updefault}{$\frac{1}{2}$}%
}}}}
\put(5691,-1211){\makebox(0,0)[lb]{\smash{{\SetFigFont{12}{14}{\rmdefault}{\mddefault}{\updefault}{$\frac{1}{2}$}%
}}}}
\put(2724,419){\makebox(0,0)[lb]{\smash{{\SetFigFont{12}{14}{\rmdefault}{\mddefault}{\updefault}{$a_1$}%
}}}}
\put(2731,-204){\makebox(0,0)[lb]{\smash{{\SetFigFont{12}{14}{\rmdefault}{\mddefault}{\updefault}{$b_1$}%
}}}}
\put(2754,-879){\makebox(0,0)[lb]{\smash{{\SetFigFont{12}{14}{\rmdefault}{\mddefault}{\updefault}{$b_2$}%
}}}}
\put(2304,-331){\makebox(0,0)[lb]{\smash{{\SetFigFont{12}{14}{\rmdefault}{\mddefault}{\updefault}{$a_2$}%
}}}}
\put(2312,-1550){\makebox(0,0)[lb]{\smash{{\SetFigFont{12}{14}{\rmdefault}{\mddefault}{\updefault}{$a_3$}%
}}}}
\put(2758,-2128){\makebox(0,0)[lb]{\smash{{\SetFigFont{12}{14}{\rmdefault}{\mddefault}{\updefault}{$b_3$}%
}}}}
\put(5806,591){\makebox(0,0)[lb]{\smash{{\SetFigFont{12}{14}{\rmdefault}{\mddefault}{\updefault}{$a_1$}%
}}}}
\put(5396,-155){\makebox(0,0)[lb]{\smash{{\SetFigFont{12}{14}{\rmdefault}{\mddefault}{\updefault}{$b_1$}%
}}}}
\put(5544,-616){\makebox(0,0)[lb]{\smash{{\SetFigFont{12}{14}{\rmdefault}{\mddefault}{\updefault}{$a_2$}%
}}}}
\put(5807,-876){\makebox(0,0)[lb]{\smash{{\SetFigFont{12}{14}{\rmdefault}{\mddefault}{\updefault}{$b_2$}%
}}}}
\put(5383,-1560){\makebox(0,0)[lb]{\smash{{\SetFigFont{12}{14}{\rmdefault}{\mddefault}{\updefault}{$a_3$}%
}}}}
\put(5799,-2130){\makebox(0,0)[lb]{\smash{{\SetFigFont{12}{14}{\rmdefault}{\mddefault}{\updefault}{$b_3$}%
}}}}
\put(1126,-804){\makebox(0,0)[lb]{\smash{{\SetFigFont{12}{14}{\rmdefault}{\mddefault}{\updefault}{$G$}%
}}}}
\put(4951,-2214){\makebox(0,0)[lb]{\smash{{\SetFigFont{12}{14}{\rmdefault}{\mddefault}{\updefault}{$G^+$}%
}}}}
\put(6519,-2221){\makebox(0,0)[lb]{\smash{{\SetFigFont{12}{14}{\rmdefault}{\mddefault}{\updefault}{$G^-$}%
}}}}
\put(2368,539){\makebox(0,0)[lb]{\smash{{\SetFigFont{12}{14}{\rmdefault}{\mddefault}{\updefault}{$\ell$}%
}}}}
\end{picture}%
\caption{(a) A graph $G$ with symmetric axis; (b) the resulting graph after the cutting procedure.}
\label{verticalfactor}
\end{figure}

Let $G$ be a weighted planar bipartite graph that is symmetric about a vertical line $\ell$. Assume that the set of vertices lying on $\ell$ is a cut set of $G$ (i.e., the removal of these vertices disconnects $G$). One readily sees that the number of vertices of $G$ on $\ell$ must be even if $G$ has perfect matchings, let $w(G)$ be half of this number. Let $a_1,b_1,a_2,b_2,\dots,a_{w(G)},b_{w(G)}$ be the vertices lying on $\ell$, as they occur from top to bottom. Let us color vertices of $G$ by black or white so that any two adjacent vertices have opposite colors. Without loss of generality, we assume that $a_1$ is always colored white. Delete all edges on the left of $\ell$ at all white $a_i$'s and black $b_j$'s, and delete all edges on the right of  $\ell$ at all black $a_i$'s and white $b_j$'s. Reduce the weight of each edge lying on $\ell$ by half; leave all other weights unchanged. Since the set of vertices of $G$ on $\ell$ is a cut set, the graph obtained from the above procedure has two disconnected parts, one on the left of $\ell$ and one on the right of $\ell$, denoted by $G^+$ and $G^-$ respectively (see Figure \ref{verticalfactor}).

 \begin{theorem}[Factorization Theorem, Ciucu \cite{Ciucu1}]
Let $G$ be a bipartite weighted symmetric graph separated by its symmetry axis. Then
\begin{equation}\label{factoreq}
\M(G)=2^{w(G)}\M(G^+)\M(G^-).
\end{equation}
\end{theorem}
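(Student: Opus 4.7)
My plan is to prove the factorization $\M(G) = 2^{w(G)}\M(G^+)\M(G^-)$ by a weight-preserving correspondence between perfect matchings of $G$ and pairs of perfect matchings of $G^+$ and $G^-$ equipped with a binary decoration at each of the $w(G)$ axis pairs. The strategy uses the cut-set hypothesis to localize the analysis near $\ell$ and reduces the global factorization to a small local identity at the topmost axis pair.

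As a preliminary step, I would verify the self-consistency of the cut rule: using the bipartite constraint (adjacent axis vertices must have opposite colors) together with the alternating $a_1, b_1, a_2, b_2, \dots$ labeling, I would confirm via a short case check that every axis vertex is assigned to exactly one of $G^+$ or $G^-$, and every on-axis edge lies entirely in a single piece. Checking the four possible color/label configurations for the endpoints of an on-axis edge shows both endpoints always land in the same piece, so $G^+$ and $G^-$ are indeed vertex-disjoint subgraphs whose union (with halved weights on on-axis edges) constitutes the cut of $G$.

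For the factorization itself, I would proceed by induction on $w(G)$. The base case $w(G)=0$ is immediate: $\ell$ carries no vertices, no weights are halved, and $G$ disconnects as $G^+ \sqcup G^-$. For the inductive step, I would design a local replacement at the topmost axis pair $(a_1, b_1)$ producing a smaller planar bipartite symmetric graph $G'$ with $w(G')=w(G)-1$ whose cut halves $(G')^\pm$ arise from $G^\pm$ by the corresponding local modification. The desired local identity is the joint statement $\M(G) = 2\,\M(G')$ together with $\M(G^+)\M(G^-) = \M((G')^+)\M((G')^-)$. Combining this with the induction hypothesis $\M(G') = 2^{w(G)-1}\M((G')^+)\M((G')^-)$ closes the induction and yields the theorem.

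The main obstacle is the verification of the local identity at $(a_1, b_1)$. This amounts to a finite case analysis over the ways $a_1$ and $b_1$ can be matched in a perfect matching of $G$ — to each other by an on-axis edge, to $a_2$ or $b_2$ by an on-axis edge, or individually to off-axis vertices by straddling edges — and a careful tally of how each case contributes on both sides of the claimed identity. The halved weights on on-axis edges in $G^\pm$ are precisely calibrated so that the local sum produces a factor of $2$ (rather than $1$ or $4$), while the coloring rule ensures that each on-axis edge persists on the correct side of the reduced cut. Once this finite local verification is in hand, the induction closes mechanically, and the cumulative binary choices made across the $w(G)$ peeling steps yield the promised $2^{w(G)}$ decoration in the bijective interpretation.
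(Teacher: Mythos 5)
First, a point of reference: the paper does not prove this statement at all --- it is quoted from Ciucu \cite{Ciucu1} as a known result --- so your attempt has to be judged as a proof of Ciucu's theorem itself rather than against an internal argument.

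Your outline has a genuine gap at its central step. Everything is delegated to an unspecified ``local replacement at the topmost axis pair'' producing a graph $G'$ with $w(G')=w(G)-1$ and satisfying both $\M(G)=2\M(G')$ and $\M(G^+)\M(G^-)=\M((G')^+)\M((G')^-)$. You never construct $G'$, and, more importantly, no such identity can be settled by a ``finite case analysis over the ways $a_1$ and $b_1$ can be matched'': whether $a_1$ can be matched to a left neighbour at all is a global property (it requires the rest of the left half to admit a near-perfect matching), so the weighted count of matchings with $a_1$ matched leftward versus rightward is not determined by any bounded neighbourhood of $(a_1,b_1)$. Already for the $4$-cycle on $a_1,u,b_1,v$ with $u$ on the left and $v$ on the right (so $w=1$), the two perfect matchings $\{a_1u,\,b_1v\}$ and $\{a_1v,\,b_1u\}$ differ globally; moreover the first of them uses the edge $a_1u$, which is deleted in forming the pieces, so matchings of $G$ do not simply restrict to pairs of matchings of $G^{\pm}$ plus a local bit. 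In effect your ``local identity'' $\M(G)=2\M(G')$ is a one-pair instance of the theorem you are trying to prove, which makes the induction circular unless that identity is established by some genuinely different mechanism. Ciucu's actual argument is global: it analyses, for an arbitrary perfect matching, the pattern in which all $2w(G)$ axis vertices are matched (left, right, or along $\ell$) simultaneously, shows that this pattern is rigidly constrained by the bipartition and the separation hypothesis, and only then extracts the factor $2^{w(G)}$, with the halved weights on the axis edges accounted for at that stage. Your preliminary consistency check of the cutting rule (both endpoints of an on-axis edge land on the same side) is correct, but it is the only part of the proposal that is actually carried out.
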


Next, we quote a result on the number of tilings of a \textit{semi-hexagon} due to The next result is due to Cohn, Larsen and Propp (see \cite{Cohn}, Proposition 2.1). A semi-hexagon of sides $a,b,a,a+b$ is the portion of a hexagon of sides $a$, $b$, $a$, $a$, $b$, $a$ (in cyclic order, starting from the northwestern side) in the triangular lattice that stays above the horizontal symmetric axis of the hexagon. We are interested in the number of tilings of the semi-hexagon sides $a,b,a,a+b$, where the $s_1$-st, the $s_2$-nd, $\dotsc$, and the $s_a$-th up-pointing unit triangles in the base have been removed, denoted by $SH_{a,b}(s_1,s_2,\dotsc,s_a)$.

\begin{lemma}\label{semihex}
For any $a,b>0$, and $1\leq s_1<s_2<\dotsc<s_a\leq a+b$
\begin{equation}
\M(SH_{a,b}(s_1,s_2,\dotsc,s_a))=\prod_{1\leq i< j\leq a}\frac{s_j-s_i}{j-i}.
\end{equation}
\end{lemma}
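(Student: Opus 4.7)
My plan is to prove this by the Lindström–Gessel–Viennot (LGV) methodology together with the classical bijection between lozenge tilings and non-intersecting lattice paths.

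First I would translate the tiling problem into a path-counting problem. Orient the semi-hexagon so that its base is horizontal, and fix one of the three lozenge orientations (say, the vertical ones). In any lozenge tiling of $SH_{a,b}(s_1,\dots,s_a)$, the lozenges of this fixed orientation can be assembled into $a$ monotone chains, which encode as $a$ non-intersecting lattice paths on $\mathbb{Z}^2$ (with unit east and north steps): the $i$-th path starts at a fixed lattice point $A_i$ determined by the shape of the semi-hexagon (so $A_i$ is independent of the $s_j$'s, and the $A_i$'s are in natural order along the NE side), and terminates at a point $B(s_i)$ on the base corresponding to the $i$-th removed up-triangle.

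Next I would apply LGV. Since the starting points $A_1,\dots,A_a$ and the ending points $B(s_1),\dots,B(s_a)$ sit in the standard convex position with compatible orderings, the only permutation contributing to the LGV signed sum is the identity, so
\[
\M(SH_{a,b}(s_1,\dots,s_a)) = \det\bigl(p(A_i \to B(s_j))\bigr)_{1\le i,j\le a},
\]
where $p(A \to B)$ counts monotone lattice paths from $A$ to $B$. Each entry is a single binomial coefficient; a careful choice of starting line and a row-wise extraction of common factorial factors reduces the matrix to $\bigl(\binom{s_j-1}{i-1}\bigr)_{1\le i,j\le a}$ up to an overall scalar that should turn out to be $1$.

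Finally I would evaluate the determinant. Since $\binom{s_j-1}{i-1}$ is a polynomial of degree $i-1$ in $s_j$ with leading coefficient $1/(i-1)!$, standard row operations replace the matrix by a triangular transform of the Vandermonde matrix in the $s_j$'s, giving
\[
\det\Bigl(\binom{s_j-1}{i-1}\Bigr)_{1\le i,j\le a} = \frac{\prod_{1\le i<j\le a}(s_j - s_i)}{\prod_{i=1}^{a}(i-1)!} = \prod_{1\le i<j\le a}\frac{s_j - s_i}{j-i},
\]
where the second equality uses $\prod_{i=1}^{a}(i-1)! = \prod_{1\le i<j\le a}(j-i)$. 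This matches the claimed formula. The main (minor) obstacle is the bookkeeping in the middle step: one must identify the correct starting points $A_i$ and verify that, after pulling out appropriate row factors, the path counts really do take the clean form $\binom{s_j-1}{i-1}$ with no leftover dependence on $a,b$. Once this is set up carefully, the Vandermonde evaluation is routine.
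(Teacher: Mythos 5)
Your proposal is correct: setting up the obtuse coordinate system so that the $j$-th endpoint on the slanted side is $(j-1,j-1)$ and the dent at position $s$ is $(s-1,0)$, the LGV entries come out as exactly $\binom{s_j-1}{i-1}$ with no leftover scalar, and your Vandermonde-type evaluation (using $\prod_{i=1}^{a}(i-1)!=\prod_{1\leq i<j\leq a}(j-i)$) finishes the computation. For comparison: the paper does not prove this lemma at all --- it quotes it from Cohn, Larsen and Propp --- but your argument is the standard proof of that result, and it is precisely the same bijection-plus-LGV technique the paper itself carries out in detail in the proof of Theorem \ref{QHex}, where the only structural difference is that the vertical left boundary of the quartered hexagon puts the endpoints at $(j-1,2j-2)$ instead of $(j-1,j-1)$, so the determinant there requires Krattenthaler's identity rather than a plain Vandermonde reduction.
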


The following determinant identity has been proved by Krattenthaler \cite{Krat2}.
\begin{lemma}[\cite{Krat2}, Identity (2.10) in Lemma 4] \label{Klem}
Let $X_1,X_2,\dots,X_n$, $A_2,\dots,A_n$ be indeterminates, and let $C$ be a constant.  Then
\begin{align}
&\det_{1\leq i,j\leq n}((X_i-A_n-C)(X_i-A_{n-1}-C)\dotsc(X_i-A_{j+1}-C) \notag\\
&\cdot(X_i+A_n)(X_i+A_{n-1})\dotsc(X_i+A_{j+1}))=\prod_{1\leq i<j\leq n}(X_j-X_i)(C-X_i-X_j).
\end{align}
\end{lemma}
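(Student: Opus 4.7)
The plan is to apply the Lindström--Gessel--Viennot (LGV) methodology and evaluate the resulting determinant using Krattenthaler's identity (Lemma \ref{Klem}). Ciucu's Factorization Theorem, combined with the semi-hexagon formula (Lemma \ref{semihex}), will then relate the barred quartered hexagons $\overline{QH}$ to the unbarred ones.

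Via the standard bijection between lozenge tilings and non-intersecting lattice paths, a tiling of $QH_{m,n}(a_1,\dotsc,a_k)$ corresponds to $k$ non-intersecting paths from the $k$ up-pointing triangles at positions $a_1<\dotsc<a_k$ on the bottom, to $k$ fixed endpoints on the upper-right slant edge. Hence $\M(QH_{m,n})=\det_{1\le i,j\le k}(P(a_i,j))$ by LGV, where $P(a_i,j)$ is the number of admissible paths from the $i$-th start to the $j$-th endpoint. A direct path count (or an application of Lemma \ref{semihex} to a strip) shows that, after extracting column-wise common factorials, each entry takes the polynomial form
\[
\prod_{\ell=j+1}^{k}(a_i-A_\ell-C)(a_i+A_\ell)
\]
required by Lemma \ref{Klem}, with the $A_\ell$'s and $C$ determined by the side lengths of $H_{m,2(n-k)+1,m}$.

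Applying Lemma \ref{Klem} to this matrix yields $\prod_{i<j}(a_j-a_i)(C-a_i-a_j)$ up to the extracted factor. With $C=1$ for the $QH_{2k-1,n}$ case, the output matches the products in the definition of $\E$, and reconciling the remaining factorials against the prefactor $2^{k^2}/(0!\,2!\cdots(2k-2)!)$ gives (\ref{QHeq1}). The $QH_{2k,n}$ case is analogous but with an extra column in the determinant that introduces an extra factor $a_1a_2\cdots a_k$, matching the prefactor of $\overline{\Od}$, hence (\ref{QHeq2}). For (\ref{QHeq3}) and (\ref{QHeq4}), I will reflect $\overline{QH}_{m,n}$ about its left vertical edge to form a symmetric semi-hexagon $S$, apply the Factorization Theorem to obtain an identity of the shape $\M(S)=2^{w(S)}\M(QH_{m,n})\M(\overline{QH}_{m,n})$, compute $\M(S)$ via Lemma \ref{semihex}, and solve for $\M(\overline{QH}_{m,n})$ using the already-established formulas for $\M(QH_{m,n})$.

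The main technical obstacle is the second step: showing that the path count $P(a_i,j)$, naturally given by a reflection-principle expression as a difference of binomials, simplifies to the precise polynomial form required by Krattenthaler's identity. The identification of the $A_\ell$'s and $C$ with the hexagon's geometry depends on the parity of $m$ and must be handled case-by-case, and the weight-$1/2$ edges along the left boundary in the $\overline{QH}$ cases shift the endpoint structure slightly. Tracking powers of $2$ across the four cases -- the normalizations in $\E,\Od,\overline{\E},\overline{\Od}$, the weight-$1/2$ contributions, and the factor $2^{w(S)}$ from the Factorization Theorem -- is tedious but mechanical once the polynomial form of $P(a_i,j)$ has been pinned down.
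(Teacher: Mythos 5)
Your proposal does not prove the statement at hand. Lemma \ref{Klem} is a purely algebraic determinant evaluation in the indeterminates $X_1,\dots,X_n$, $A_2,\dots,A_n$ and the constant $C$ --- it says nothing about tilings, lattice paths, or hexagons. What you have written is instead an outline of the proof of Theorem \ref{QHex} (the enumeration of tilings of quartered hexagons), in which Lemma \ref{Klem} is invoked as a black box (``Applying Lemma \ref{Klem} to this matrix yields $\prod_{i<j}(a_j-a_i)(C-a_i-a_j)$\dots''). At no point do you establish the identity itself, so the target statement is left entirely unproved. (For what it is worth, the paper does not prove it either: it quotes the identity from Krattenthaler's \emph{Advanced determinant calculus}, so there is no internal proof to compare against; but a proposal that merely \emph{uses} the lemma cannot count as a proof of it.)

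If you wanted to actually prove Lemma \ref{Klem}, a standard route is: observe that the $(i,j)$-entry is $\prod_{\ell=j+1}^{n}(X_i-A_\ell-C)(X_i+A_\ell)$, a polynomial of degree $2(n-j)$ in $X_i$, so the determinant is a polynomial in the $X_i$ of total degree at most $2\binom{n}{2}$; it vanishes when $X_i=X_j$ (equal rows), and it also vanishes when $X_j=C-X_i$, because then $X_j-A_\ell-C=-(X_i+A_\ell)$ and $X_j+A_\ell=-(X_i-A_\ell-C)$, so rows $i$ and $j$ again coincide. Hence the determinant is divisible by $\prod_{i<j}(X_j-X_i)(C-X_i-X_j)$, which already has the maximal possible degree, and one finishes by comparing the coefficient of the monomial $\prod_i X_i^{2(n-i)}$ on both sides (it is $1$). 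None of this reasoning, nor any substitute for it, appears in your write-up.
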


We are now ready to prove Theorem \ref{QHex}.
\begin{proof}[Proof of Theorem \ref{QHex}]
Write for short $R_1:=QH_{2k-1,n}(a_1,a_2,\dotsc,a_k)$.
We use a standard bijection mapping each tiling $\mu$ of the region $R_1$ in the triangular lattice to a $k$-tuple of non-intersection lattice paths taking steps west or north on the square grid $\mathbb{Z}^2$.

Label the centers of the left sides of up-pointing unit triangles along the left boundary of $R_1$ from bottom to top by $v_1,v_2,\dotsc,v_k$. Label the centers of  the left sides of  up-pointing unit triangles, which have been removed from the bottom of the region, from left to right by $u_1,u_2,\dotsc,u_k$ (see Figure \ref{QH2}(a) for an example corresponding to the region in Figure \ref{QHnew}; the black dots indicate the points $u_i$'s and $v_j$'s).

Consider now a rhombus  $r_1$ of  $\mu$ whose one side contains $u_i$, for some arbitrary but fixed $1\leq i \leq k$. Denote by $w_1$ the center
of the side of $r_1$ opposite the side containing $u_i$. Let $r_2$ be other rhombus of $\mu$ that has a side containing $w_1$. Denote by $w_2$ the center of the side of $r_2$ opposite the side containing $w_1$. Continue our rhombi selecting process by picking a new rhombus $r_3$ of $\mu$ that has a side containing $w_2$. This process gives  a path of rhombi growing upward, and ending in a rhombus containing one of the $v_j$'s (see the paths of shaded rhombi in Figure \ref{QH2}(b)). We can identify this path of rhombi with the linear path $u_i\rightarrow w_1\rightarrow w_2\rightarrow w_3\rightarrow\dotsc \rightarrow v_j$ (see the dotted paths in Figure \ref{QH2}(b)).

\begin{figure}\centering
\resizebox{!}{12.5cm}{
\begin{picture}(0,0)%
\includegraphics{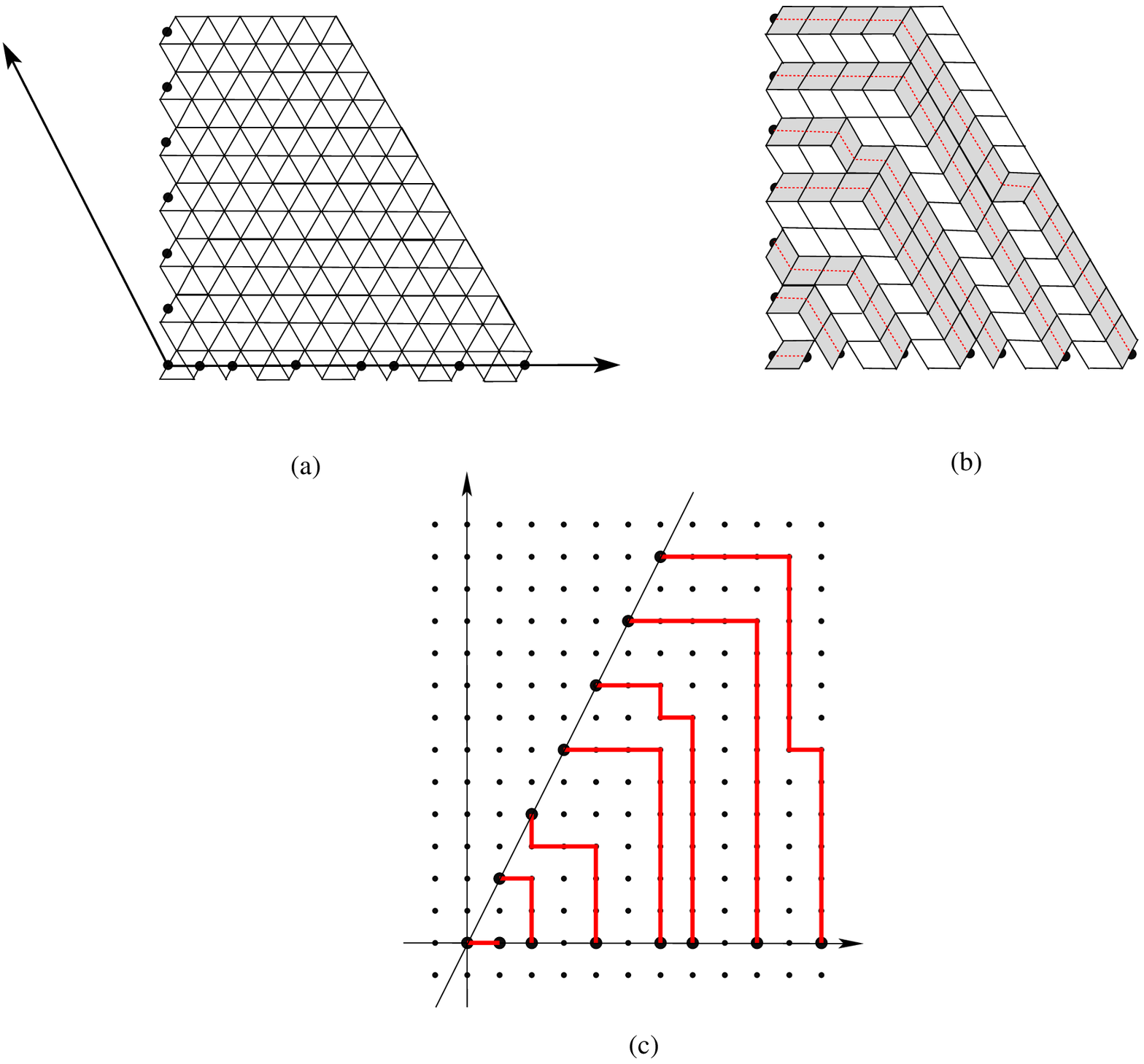}%
\end{picture}%
\setlength{\unitlength}{3947sp}%
\begingroup\makeatletter\ifx\SetFigFont\undefined%
\gdef\SetFigFont#1#2#3#4#5{%
  \reset@font\fontsize{#1}{#2pt}%
  \fontfamily{#3}\fontseries{#4}\fontshape{#5}%
  \selectfont}%
\fi\endgroup%
\begin{picture}(8469,7890)(347,-8212)
\put(1243,-3238){\makebox(0,0)[lb]{\smash{{\SetFigFont{12}{14}{\rmdefault}{\mddefault}{\updefault}{$O$}%
}}}}
\put(4640,-3418){\makebox(0,0)[lb]{\smash{{\SetFigFont{12}{14}{\rmdefault}{\mddefault}{\updefault}{$x$}%
}}}}
\put(575,-523){\makebox(0,0)[lb]{\smash{{\SetFigFont{12}{14}{\rmdefault}{\mddefault}{\updefault}{$y$}%
}}}}
\put(5554,-4134){\makebox(0,0)[lb]{\smash{{\SetFigFont{12}{14}{\rmdefault}{\mddefault}{\updefault}{$y=2x$}%
}}}}
\put(5798,-538){\makebox(0,0)[lb]{\smash{{\SetFigFont{12}{14}{\rmdefault}{\mddefault}{\updefault}{$v_7$}%
}}}}
\put(5800,-957){\makebox(0,0)[lb]{\smash{{\SetFigFont{12}{14}{\rmdefault}{\mddefault}{\updefault}{$v_6$}%
}}}}
\put(5803,-1380){\makebox(0,0)[lb]{\smash{{\SetFigFont{12}{14}{\rmdefault}{\mddefault}{\updefault}{$v_5$}%
}}}}
\put(5790,-1837){\makebox(0,0)[lb]{\smash{{\SetFigFont{12}{14}{\rmdefault}{\mddefault}{\updefault}{$v_4$}%
}}}}
\put(5775,-2264){\makebox(0,0)[lb]{\smash{{\SetFigFont{12}{14}{\rmdefault}{\mddefault}{\updefault}{$v_3$}%
}}}}
\put(5797,-2648){\makebox(0,0)[lb]{\smash{{\SetFigFont{12}{14}{\rmdefault}{\mddefault}{\updefault}{$v_2$}%
}}}}
\put(5775,-3061){\makebox(0,0)[lb]{\smash{{\SetFigFont{12}{14}{\rmdefault}{\mddefault}{\updefault}{$v_1$}%
}}}}
\put(6245,-3247){\makebox(0,0)[lb]{\smash{{\SetFigFont{12}{14}{\rmdefault}{\mddefault}{\updefault}{$u_1$}%
}}}}
\put(6500,-3235){\makebox(0,0)[lb]{\smash{{\SetFigFont{12}{14}{\rmdefault}{\mddefault}{\updefault}{$u_2$}%
}}}}
\put(6953,-3221){\makebox(0,0)[lb]{\smash{{\SetFigFont{12}{14}{\rmdefault}{\mddefault}{\updefault}{$u_3$}%
}}}}
\put(7439,-3221){\makebox(0,0)[lb]{\smash{{\SetFigFont{12}{14}{\rmdefault}{\mddefault}{\updefault}{$u_4$}%
}}}}
\put(7678,-3214){\makebox(0,0)[lb]{\smash{{\SetFigFont{12}{14}{\rmdefault}{\mddefault}{\updefault}{$u_5$}%
}}}}
\put(8149,-3214){\makebox(0,0)[lb]{\smash{{\SetFigFont{12}{14}{\rmdefault}{\mddefault}{\updefault}{$u_6$}%
}}}}
\put(8678,-3214){\makebox(0,0)[lb]{\smash{{\SetFigFont{12}{14}{\rmdefault}{\mddefault}{\updefault}{$u_7$}%
}}}}
\end{picture}}
\caption{Bijection between tilings of $R_1$ and families of non-intersecting paths.}
\label{QH2}
\end{figure}

Consider next the obtuse ($120^0$ angle) coordinate system whose origin at $v_1$ and whose $x$-axis contains all the points $u_i$'s (see Figure \ref{QH}(a)). The linear path connecting $u_i$ and $v_j$ is a lattice path in this coordinate. Normalize this coordinate system and rotating it in standard position, we get a lattice path on square grid $\mathbb{Z}^2$ (see Figure \ref{QH}(c)). It is easy to see that $v_j$ has coordinate $(j-1,2j-2)$ and $u_i$ has coordinate $(a_i-1,0)$, for any $1\leq i,j\leq k$ .

We obtain this way a $k$-tuple $\mathcal{P}$ of lattice paths  in $\mathbb{Z}^2$ using north and west steps, and they cannot touch each other (since the corresponding paths of rhombi are disjoint). One readily sees that the correspondence $\mu\mapsto \mathcal{P}$ is a bijection between the set of tilings of $R_1$ and the set of $k$-tuples $\mathcal{P}$ of non-intersecting lattice paths starting at $u_1,\dotsc,u_k$, and ending at $v_1,\dotsc,v_k$. 

By Lindstr\"{o}m-Gessel-Viennot theorem (see \cite{Lind}, Lemma 1; or  \cite{Stem}, Theorem 1.2), the number of such $k$-tuples $\mathcal{P}$ of non-intersection lattice paths is given by the determinant of the $k\times k$ matrix $\mathbf{A}$ whose $(i,j)$-entry is the number of lattice paths from $u_i=(a_i-1,0)$ to $v_j=(j-1,2j-2)$ in $\mathbb{Z}^2$ ,  that is
\[\binom{a_i+j-2}{2j-2}=\frac{(a_i+j-2)!}{(2j-2)!(a_i-j)!}\]
 (assume that $\binom{a_i+j-2}{2j-2}=0$ if $a_i-j<0$). Factor out $\frac{1}{(2j-2)!}$ from the each $j$-th column of the matrix $\mathbf{A}$, for $1\leq j\leq k$, we have
\begin{equation}\label{Aeq1}
\det(\mathbf{A})=\frac{1}{0!2!\dotsc(2k-2)!} \det_{1\leq i,j\leq k} \left( (a_i-j+1)(a_i-j+2)\dotsc (a_i+j-2)\right).
\end{equation}
Swap the $j$-th and the $(n-j+1)$-th columns, for any $1\leq j\leq k$, in the matrix on the right hand side of (\ref{Aeq1}), we get a new matrix
\[\mathbf{B}=\left((a_i-n+j)(a_i-n+j+1)\dotsc (a_i+n-j-1)\right)_{1\leq i,j\leq k},\]
and
\begin{equation}\label{Aeq3}
\det(\mathbf{B})=(-1)^{n(n-1)/2} \det_{1\leq i,j\leq k} \left( (a_i-j+1)(a_i-j+2)\dotsc (a_i+j-2)\right).
\end{equation}
Apply Lemma \ref{Klem}, with $C=1$ and  $X_i=a_i$ and $A_j=n-j$, to the matrix $\textbf{B}$, we obtain
\begin{equation}\label{Aeq4}
\det (\mathbf{B})=(-1)^{n(n-1)/2}\prod_{1\leq i< j\leq k}(a_j-a_i)(a_i+a_j-1).
\end{equation}
By (\ref{Aeq1}), (\ref{Aeq3}), and (\ref{Aeq4}), we have
\begin{equation}\label{Aeq2}
 \det(\mathbf{A})=\frac{1}{0!2!\dotsc(2k-2)!}\prod_{1\leq i< j\leq k}(a_j-a_i)(a_i+a_j-1),
\end{equation}
and (\ref{QHeq1}) follows.

\begin{figure}\centering
\includegraphics[width=12cm]{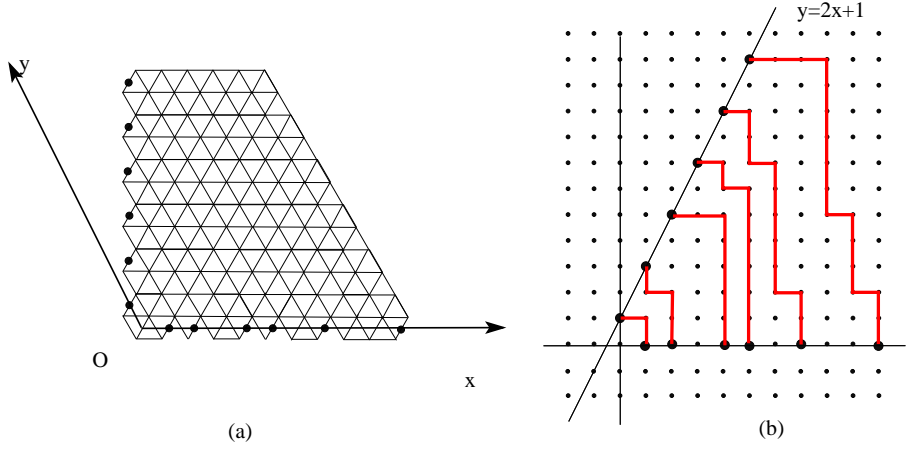}
\caption{Bijection between tilings of $R_2$ and families of non-intersecting paths.}
\label{QH3}
\end{figure}

\medskip

Next, we prove (\ref{QHeq2}) by the same method. We also have a bijection between the set of tilings of $R_2:=QH_{2k,n}(a_1,a_2,\dotsc,a_k)$ and the set of $k$-tuples of non-intersecting lattice path connecting $u_1,\dotsc,u_k$ and $v_1,\dotsc, v_k$, the only difference here is that the obtuse coordinate system is now selected so that $v_1$ has coordinate $(0,1)$ (as oppose to having coordinate $(0,0)$ in the proof of (\ref{QHeq1})). Figure \ref{QH3} illustrates an example corresponding to the region in Figure \ref{QH}(a). One readily sees  that  $u_i$ has also coordinate $(a_i-1,0)$, and $v_j$ has now coordinate $(j-1,2j-1)$ in the new coordinate system, for $1\leq i,j \leq k$. Again, by Lindstr\"{o}m-Gessel-Viennot Theorem the number of tilings of $R_2$ is given by the determinant of the $k\times k$ matrix $\mathbf{D}$ whose $(i,j)$-entry is $\binom{a_i+j-1}{2j-1}=\frac{(a_i+j-1)!}{(2j-1)!(a_i-j)!}$.

Factor out $\frac{1}{(2j-1)!}$ from the $j$-th column, and factor out $a_i$ from the $i$-th row of  the matrix $\textbf{D}$, for  any $1\leq i, j\leq k$, we get
\begin{align}\label{Deq2}
\det(\mathbf{D})=&\frac{a_1a_2\dotsc a_k}{1!3!5!\dotsc(2k-1)!}\\
&\times\det_{1\leq i,j\leq k}\left((a_i-j+1)\dotsc(a_i-1)(a_i+1)\dotsc(a_i+j-1)\right).\notag
\end{align}
Swap the $j$-th and the $(n-j+1)$-th columns, for any $1\leq j\leq k$, of the matrix on the right hand side of (\ref{Deq2}), we get a new matrix
\begin{align}
\mathbf{E}=((a_i-n+j)&(a_i-n+j+1)\dotsc(a_i-1)\notag\\
&\cdot(a_i+1)(a_i+2)\dotsc(a_i+n-j))_{1\leq i,j\leq k},
\end{align}
 and
\begin{equation}\label{Deq3}
\det(\mathbf{D})=(-1)^{n(n-1)/2}\frac{a_1a_2\dotsc a_k}{1!3!5!\dotsc(2k-1)!}\det(\mathbf{E}).
\end{equation}
Apply Lemma \ref{Klem}, with $C=0$ and  $X_i=a_i$ and $A_j=n-j$, to the matrix $\mathbf{E}$, we have
\begin{equation}\label{Deq4}
\det (\mathbf{E})=(-1)^{n(n-1)/2}\prod_{1\leq i< j\leq k}(a_j-a_i)(a_i+a_j).
\end{equation}
Therefore, by (\ref{Deq2})--(\ref{Deq4}), we obtain
\begin{equation}
\det(\mathbf{D})=\frac{a_1a_2\dotsc a_k}{1!3!5!\dotsc(2k-1)!}\prod_{1\leq i< j\leq k}(a_j-a_i)(a_i+a_j),
\end{equation}
which implies (\ref{QHeq2}).

\begin{figure}\centering
\includegraphics[width=14cm]{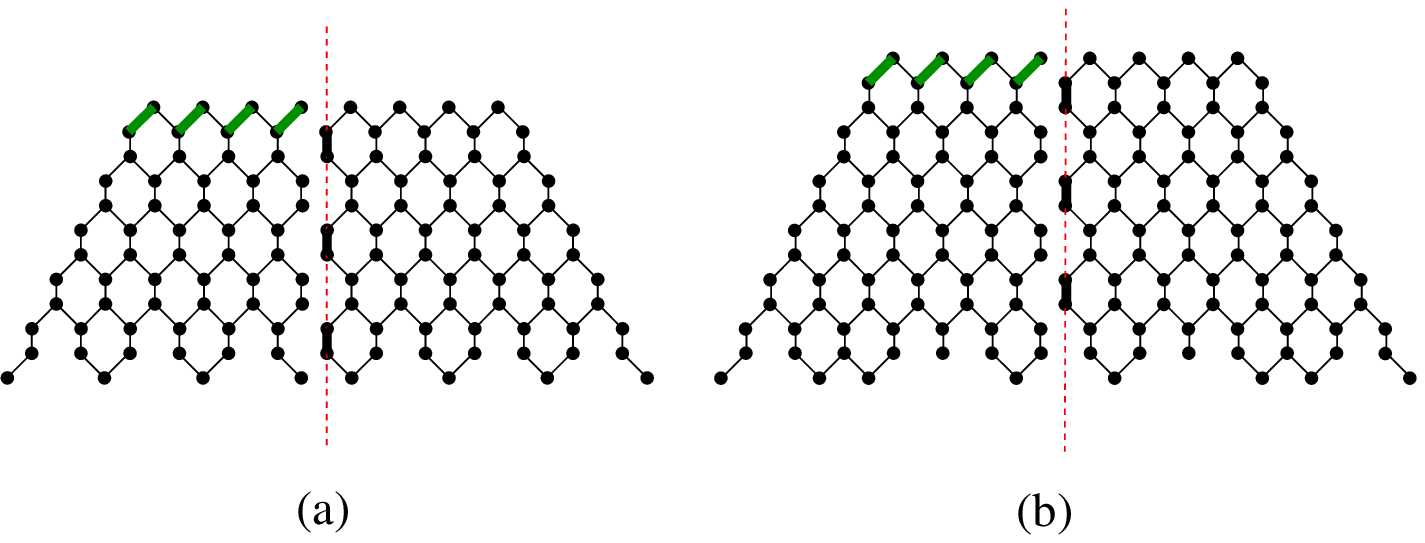}
\caption{Illustrating the proof of Theorem \ref{QHex}.}
\label{factorQH}
\end{figure}

\medskip

Apply the Factorization Theorem to the dual graph $G$ of the semi-hexagon $SH_{2k,2n}(S)$, where the triangles removed from the bottom are at the positions in the set $S:=\{n+1-a_k,a+1-a_{k-1},\dots,n+1-a_1\}\cup\{n+a_1,n+a_2,\dots,n+a_k\}$.  We get $G^-$ is isomorphic to the dual graph of the region $\overline{QH}_{2k,n}(a_1,\dotsc,a_k)$; and after removing all forced edges on the top of $G^+$, we get a graph isomorphic to the dual graph of $QH_{2k-1,n}(a_1,\dotsc,a_k)$ (see Figure \ref{factorQH}(a) for an example with $k=3$, $n=7$, $a_1=2$, $a_2=4$, $a_3=6$). Therefore, we obtain
\begin{equation}\label{QHfactor1}
\M(SH_{2k,2n}(S))=2^{k}\M(QH_{2k-1,n}(a_1,\dots,a_k))\M(\overline{QH}_{2k,n}(a_1,\dots,a_k)).
\end{equation}
Similarly, apply the Factorization Theorem to the dual graph of the semi-hexagon\\ $SH_{2k+1,2n+1}(S')$, where $S':=\{n+1-a_k,a+1-a_{k-1},\dots,n+1-a_1\}\cup\{n+1\}\cup\{n+1+a_1,n+1+a_2,\dots,n+1+a_k\}$  (see Figure \ref{factorQH}(b) for an example with $k=3$, $n=7$, $a_1=2$, $a_2=3$, $a_3=6$), we get
\begin{equation}\label{QHfactor2}
\M(SH_{2k+1,2n+1}(S'))=2^{k}\M(QH_{2k,n}(a_1,\dots,a_k))\M(\overline{QH}_{2k+1,n}(a_1,\dots,a_k)).
\end{equation}
We define an operation $\Delta$ by setting
 \[\Delta(A)=\prod_{1\leq i<j \leq k} (s_j-s_i),\]
for any finite set $A:=\{s_1,s_2,\dotsc,s_k\}$.  One can check that
\[\Delta(S)=\left(\prod_{1\leq i<j\leq k}(a_j-a_i)\right)^2\prod_{1\leq i,j\leq k}(a_i+a_j+1)\]
and
\[\Delta(S')=\left(\prod_{1\leq i<j\leq k}(a_j-a_i)\right)^2\left(\prod_{i=1}^{k}a_i\right)^2\prod_{1\leq i,j\leq k}(a_i+a_j),\]
for the above sets $S$ and $S'$.

Thus, by (\ref{QHeq1}) and (\ref{QHfactor1}), together with Lemma \ref{semihex}, we have
\begin{align}
\M(&\overline{QH}_{2k,n}(a_1,\dots,a_k))=\frac{2^{-k}\Delta(S)}{0!1!2!3!\dotsc(2k-1)!\cdot \M(QH_{2k-1,n}(a_1,\dots,a_k))}\\
&=\frac{2^{-k}\Delta(S)}{0!1!2!3!\dotsc(2k-1)!\cdot2^{-k^2}\E(a_1,\dotsc,a_k)}\\
&=\frac{2^{-k}}{1!3!\dotsc(2k-1)!}\prod_{1\leq i< j\leq k}(a_j-a_i)\prod_{1\leq i\leq j\leq k}(a_i+a_j-1),
\end{align}
which completes the proof of (\ref{QHeq4}).

Analogously, by the Lemma \ref{semihex}, (\ref{QHeq2}) and (\ref{QHfactor2}), we obtain (\ref{QHeq3}).
\end{proof}

\begin{remark}
If we construct the nonintersecting lattice paths in the region $QH_{2k-1,n}(a_1,a_2,$ $\dotsc,a_k)$ in different way: by starting from the rhombi on the north side to the rhombi on the south side, we will get different family of non-intersecting lattice paths (see Figure 3.5(a) in \cite{Krat}). These new families of lattice paths were enumerated in \cite{Krat} under the name \textit{stars}. However, the number of stars obtained in \cite{Krat} has a different form from the one in Theorem 1.1, and the authors of \cite{Krat} did not consider stars in a bijection with the tilings of $QH_{2k-1,n}(a_1,\dotsc,a_k)$ or any other regions.
\end{remark}

\section{Proof of Theorems \ref{main} and \ref{mainv}}
The dual graph of an Aztec rectangle is called an \textit{Aztec rectangle graph}, denoted by $AR_{m,n}$  the Aztec rectangle graph of order $(m,n)$ (see Figure \ref{ARregion}(c) and \ref{HoleyAR}(a) for examples).

Before presenting the proof of Theorems \ref{main} and \ref{mainv}, we quote two results about the number of perfect matchings of  an Aztec rectangle  graph with holes (i.e, vertices removed) on the bottom.
\begin{figure}\centering
\includegraphics[width=12cm]{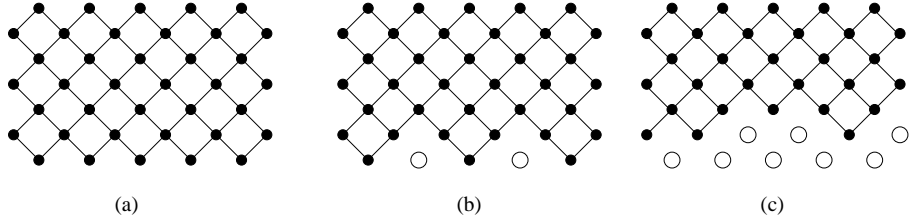}
\caption{Aztec rectangle and two holey Aztec rectangles of order $3\times 5$. The white circles indicate the removed vertices.}
\label{HoleyAR}
\end{figure}

\begin{lemma}[see Theorem 2 in cite{Mills}] \label{lem4}The number of perfect matchings of a $m\times n$ Aztec rectangle, where all the vertices in the bottom-most row, except for the $a_1$-st, the $a_2$-nd, $\dots$, and the $a_m$-th vertex, have been removed (see Figure \ref{HoleyAR}(b) for an example with $m=3$, $n=5$, $a_1=1$, $a_2=3$, $a_3=5$), equals
\begin{equation}
2^{m(m+1)/2}\prod_{1\leq i<j\leq m}\frac{a_j-a_i}{j-i}.
\end{equation}
\end{lemma}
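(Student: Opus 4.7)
The plan is to apply the Lindstr\"{o}m--Gessel--Viennot (LGV) methodology, in the same spirit as in the proof of Theorem~\ref{QHex}. First I would establish a bijection between perfect matchings of the holey Aztec rectangle graph and families of $m$ non-intersecting lattice paths in $\mathbb{Z}^2$. The sources are the $m$ unremoved bottom vertices, naturally encoded by the positions $a_1<a_2<\dotsc<a_m$, and the sinks are $m$ canonical vertices on the top row of the rectangle. Tracing a matching by starting at each unremoved bottom vertex and following the chain of matched edges upward (in direct analogy with the chain-of-rhombi construction used in the proof of Theorem~\ref{QHex}) produces a family of lattice paths, and disjointness of the matching translates into the non-intersection property.

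Once the bijection is in place, the LGV lemma expresses the (weighted) matching count as an $m\times m$ determinant whose $(i,j)$-entry is the (weighted) number of lattice paths from the source $u_i$ at horizontal coordinate $a_i$ to the sink $v_j$. A careful tracking of the allowed step weights shows that each entry equals a power of~$2$ times a binomial coefficient of the form $\binom{n-1}{a_i-j}$; after factoring out the common powers of~$2$ and common denominators, the matching count becomes $2^{m(m+1)/2}$ times a Vandermonde-type determinant, which evaluates (by a standard reflection/Jacobi--Trudi argument, or equivalently by Lemma~\ref{semihex} applied to a semi-hexagon with the same dent pattern) to
\[
\prod_{1\le i<j\le m}\frac{a_j-a_i}{j-i}.
\]

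The main obstacle will be tracking the factor $2^{m(m+1)/2}$ correctly through the bijection, since it reflects the local ``doubling'' inherent in the Aztec rectangle lattice (each interior square contributes a 2-fold local choice, in the same way this factor appears in the Aztec diamond formula $\M(\mathcal{AD}_n)=2^{n(n+1)/2}$ of Elkies--Kuperberg--Larsen--Propp). An alternative route, which bypasses the weighted-path bookkeeping entirely, is to apply Ciucu's factorization theorem together with a short sequence of urban-renewal replacements (Lemma~\ref{spider}) to split the graph into an Aztec-diamond piece, contributing the factor $2^{m(m+1)/2}$ by the classical result just mentioned, and a semi-hexagon piece with prescribed dents, contributing $\prod_{i<j}(a_j-a_i)/(j-i)$ by Lemma~\ref{semihex}; multiplying then yields the claimed formula.
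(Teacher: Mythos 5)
First, a point of reference: the paper does not prove Lemma \ref{lem4} at all --- it is quoted as Theorem 2 of Mills--Robbins--Rumsey \cite{Mills} --- so there is no internal proof to compare against and your argument must stand on its own. As written it has a genuine gap in each of the two routes you offer. In the LGV route, the entire content of the proof is the assertion that the $(i,j)$ entry of the path matrix is ``a power of $2$ times $\binom{n-1}{a_i-j}$,'' and this is both unjustified and incorrect. Unlike the lozenge-tiling situation in the proof of Theorem \ref{QHex}, where a chain of rhombi becomes a lattice path with two unit steps and each LGV entry is a single binomial coefficient, the path model for domino tilings of Aztec-type graphs has an extra diagonal step available, so the entries are Schr\"oder/Delannoy-type sums $\sum_k\binom{p_i}{k}2^k\binom{q_j}{k}$ rather than monomials of the form $2^c\binom{n-1}{a_i-j}$. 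The claimed form already fails for $m=1$: the lemma gives the answer $2$ for every $a_1$ and $n$, whereas $2\binom{n-1}{a_1-1}$ does not (take $n=3$, $a_1=2$). The determinant of the Delannoy-type matrix can indeed be evaluated --- write it as $P\,\mathrm{diag}(2^k)\,Q^{T}$ with binomial matrices $P$, $Q$, note $Q$ is unitriangular because of where the sinks sit, and apply Cauchy--Binet to get $2^{m(m+1)/2}\prod_{i<j}(a_j-a_i)/(j-i)$ --- but that computation, which you defer to ``careful tracking of the allowed step weights,'' is the whole proof.

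The alternative route misapplies the Factorization Theorem: that theorem requires the graph to be symmetric about a vertical axis whose vertices form a cut set, and $AR_{m,n}(a_1,\dotsc,a_m)$ has no such symmetry for a general dent set $\{a_i\}$ (the paper only ever invokes it for centrally symmetric dent patterns such as $S=\{n+1-a_k,\dotsc,n+1-a_1\}\cup\{n+a_1,\dotsc,n+a_k\}$). Nor does the graph split into ``an Aztec-diamond piece times a semi-hexagon piece.'' What does work, and is exactly in the spirit of the paper's own Lemmas \ref{lem1}, \ref{transform} and \ref{QHex2} (and of Ciucu's complementation theorem \cite{Ciucu2}), is an iterated urban-renewal reduction: repeated application of Lemmas \ref{VS}, \ref{spider} and \ref{star} peels the dented Aztec rectangle down to the dual graph of a dented semi-hexagon, each round contributing an explicit power of $2$; the accumulated factor is $2^{m(m+1)/2}$ and the residual region is counted by Lemma \ref{semihex}. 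If you replace the Factorization Theorem by that reduction, or carry out the Cauchy--Binet evaluation in the first route, the argument closes.
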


Next, we consider a variant of the lemma above (see \cite{Gessel}, Lemma 2).

\begin{lemma}\label{lem5} The number of perfect matchings of a $m\times n$ Aztec rectangle, where all the vertices in the bottom-most row have been removed, and where the $a_1$-st, the $a_2$-nd, $\dots$, and the $a_m$-th vertex, have been removed from the resulting graph (see Figure \ref{HoleyAR}(c),  for and example with $m=3$, $n=5$, $a_1=3$, $a_2=4$,$a_3=6$), equals
\begin{equation}
2^{m(m-1)/2}\prod_{1\leq i<j\leq m}\frac{a_j-a_i}{j-i}.
\end{equation}
\end{lemma}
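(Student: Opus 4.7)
The plan is to deduce Lemma \ref{lem5} from Lemma \ref{lem4} by a sequence of local graph transformations applied to the extra bottom strip, very much in the spirit of the side transformations in Lemma \ref{transform}. Let $G$ denote the graph in Lemma \ref{lem4} (the $m \times n$ Aztec rectangle with only the $a_i$-th vertices retained on the bottom row) and $G'$ the graph in Lemma \ref{lem5} (entire bottom row removed first, then the $a_i$-th vertices of the resulting graph deleted). Visually $G'$ has one additional strip of diamond cells at the bottom compared to $G$, and the goal is to show that transforming that strip away changes the perfect matching count by exactly $2^{-m}$, matching the discrepancy $2^{m(m-1)/2}/2^{m(m+1)/2} = 2^{-m}$ between the two claimed formulas.

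First I would apply the Vertex-Splitting Lemma \ref{VS} to each vertex in the two bottommost rows of $G'$, then use the Spider Lemma \ref{spider} to collapse each resulting diamond cell in the extra bottom strip into a single vertex with rescaled edge weights, just as was done in the proofs of Lemmas \ref{lem1} and \ref{transform}. At the $m$ positions $a_1, \dots, a_m$ where the bottom vertices are removed, the replacements instead produce forced edges, which can be deleted while recording their weights. Finally the Star Lemma \ref{star} is invoked to normalize the remaining edge weights, producing a graph isomorphic to $G$. Carefully tracking the multiplicative factors introduced by each Spider Lemma replacement, each forced-edge removal, and each Star Lemma rescaling should yield the identity $\M(G') = 2^{-m}\M(G)$, whereupon Lemma \ref{lem4} completes the proof.

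The main obstacle will be handling the $m$ "defect" positions on the bottom row where the deleted $a_i$-th vertices break the translational symmetry: the Spider Lemma cannot be applied verbatim at these positions, and care is needed to verify that the forced edges created there contribute exactly the right compensating factors. If this direct reduction becomes awkward, a clean alternative is the Lindstr\"{o}m-Gessel-Viennot approach used in the proof of Theorem \ref{QHex}: set up a bijection between perfect matchings of $G'$ and families of $m$ non-intersecting lattice paths on $\mathbb{Z}^2$ starting at the positions $(a_i-1, 0)$ and ending at prescribed points along the top boundary, apply LGV to express the count as an $m \times m$ binomial-coefficient determinant, and evaluate the determinant by column-rescaling and Lemma \ref{Klem}. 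Either route reduces the answer to the Vandermonde-type product $\prod_{1 \leq i < j \leq m}(a_j - a_i)/(j - i)$ with the correct prefactor $2^{m(m-1)/2}$.
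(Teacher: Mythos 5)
First, a point of reference: the paper does not prove Lemma~\ref{lem5} at all. Like Lemma~\ref{lem4}, it is quoted as a known result --- the text introduces it with ``see \cite{Gessel}, Lemma 2'' (Helfgott--Gessel), just as Lemma~\ref{lem4} is attributed to Mills--Robbins--Rumsey. So there is no in-paper proof to match, and any argument you supply is necessarily your own. Judged on its own terms, your sketch leaves the essential content unestablished in both of its proposed routes.

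For the reduction route: the geometry is stated backwards --- it is the Lemma~\ref{lem4} graph $G$, not $G'$, that carries the extra partial row (in $G$ the bottom row of $n$ vertices is partially \emph{retained} at positions $a_1,\dots,a_m$ below a full row of $n+1$ vertices; in $G'$ that row is gone entirely and $m$ vertices are deleted from the row of $n+1$). More importantly, the two hole patterns are of opposite type ($m$ vertices \emph{kept} out of a row of length $n$ versus $m$ vertices \emph{deleted} from a row of length $n+1$, with $a_i$ ranging up to $n$ in one case and up to $n+1$ in the other), so the claim that a strip collapse carries $G'(a_1,\dots,a_m)$ to $G(a_1,\dots,a_m)$ with a clean factor $2^{-m}$ is exactly the crux, and your proposal explicitly defers it (``care is needed to verify that the forced edges created there contribute exactly the right compensating factors''). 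The Spider Lemma in the forms quoted requires complete diamond cells or degree-two vertices, and neither is available at the defect positions, especially when consecutive $a_i$ differ by $1$; small cases ($m=1$) do work out via an ad hoc $4$-cycle argument, but that is not a proof. For the LGV fallback: the path model you describe --- unit north/west steps with binomial-coefficient path counts $\binom{a_i+j-2}{2j-2}$, evaluated by Lemma~\ref{Klem} --- is the model the paper uses for \emph{lozenge} tilings (semi-hexagons and quartered hexagons). Perfect matchings of Aztec rectangle graphs correspond instead to Schr\"oder/Delannoy-type path families; the LGV matrix entries are not plain binomial coefficients, and a pure binomial determinant cannot produce the prefactor $2^{m(m-1)/2}$. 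A workable version of your first idea would be to run urban renewal row by row, $m$ times, to transform the holey Aztec rectangle graph into the dual graph of a dented semi-hexagon and then invoke Lemma~\ref{semihex}, collecting the powers of $2$ along the way --- but that is a substantially longer argument than the single strip collapse you propose, and it still needs to be carried out. As written, the proposal is a plan with its decisive step missing.
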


Denote by $AR_{m,n}(a_1,\dotsc,a_m)$ and $\overline{AR}_{m,n}(a_1,\dotsc,a_m)$ the graphs in Lemmas \ref{lem4} and \ref{lem5}, respectively.

\begin{proof}[Proof of Theorem \ref{main}]

By Theorems \ref{QHex} (equality (\ref{QHeq1})), Lemma \ref{lem2} (equality (\ref{eq8})), and Lemma \ref{QHex2} (equality (\ref{eq3})), we get (\ref{main4}). From (\ref{main4}), Lemmas \ref{lem1} (equality (\ref{eq5})) and  Lemma \ref{lem2} (equality(\ref{eq1})), we deduce (\ref{main1}).

\begin{figure}\centering
\includegraphics[width=8cm]{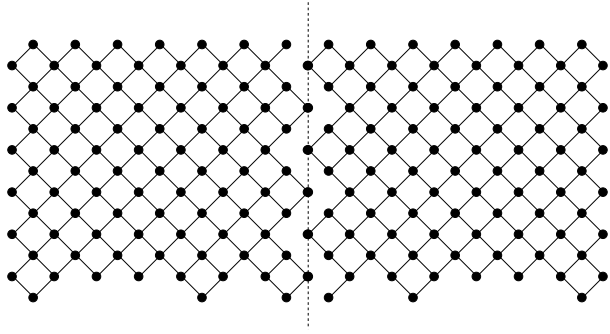}
\caption{Illustrating the proof of Theorem \ref{main}}
\label{factor2}
\end{figure}

Apply the Factorization Theorem to the graph $AR_{2k,2n}(S)$, where $S=\{n+1-a_k, n+1-a_{k-1},\dotsc,n+1-a_1\}\cup\{n+a_1,n+a_2,\dotsc,n+a_k\}$ (see Figure \ref{factor2} for an example with $n=7$, $k=3$, $a_1=1$, $a_2=3$, $a_3=7$), we get
\begin{equation}\label{factoreq2}
\M(AR_{2k,2n}(S))=2^{k}\M(RE_{2k,n}(a_1,a_2,\dotsc,a_k))\M(RO_{2k,n}(a_1,a_2,\dotsc,a_k)).
\end{equation}
Similar to the proof of the equality (\ref{QHeq4}) in Theorem \ref{QHex}, by equalities  (\ref{main1}),  (\ref{factoreq2}) and Lemma \ref{lem4}, we obtain
\begin{align}\label{factoreq3}
\M(RO_{2k,n}&(a_1,a_2,\dotsc,a_k))=\frac{2^{k(2k+1)}\Delta(S)}{0!1!2!\dotsc(2k-1)! \cdot 2^{k} \Od(a_1,\dotsc a_k)}\\
&=\frac{2^{k^2}}{1!3!\dotsc(2k-1)!}\prod_{1\leq i< j\leq k}(a_j-a_i)\prod_{1\leq j\leq i\leq k}(a_i+a_j-1).
\end{align}
Thus, Lemma \ref{lem2} implies (\ref{main2}).

Finally, by Lemma \ref{lem1} (equality (\ref{eq7})), Lemma \ref{lem2} (equality (\ref{eq2})), and the equality (\ref{main2}), we get (\ref{main3}).
\end{proof}

\medskip

\begin{proof}[Proof of Theorems \ref{mainv}]
By considering forced edges, we have the following facts similar to Lemma \ref{lem2}:
\begin{equation}\label{fn1}
\M(\overline{RO}_{2k-1,n}(a_1,a_2,\dotsc,a_k))=\M(\overline{RO}_{2k,n}(a_1,a_2,\dotsc,a_k)),
\end{equation}
\begin{equation}\label{fn2}
\M(\overline{RE}_{2k,n}(a_1,a_2,\dotsc,a_k))=\M(\overline{RE}_{2k+1,n}(a_1,a_2,\dotsc,a_k)),
\end{equation}
\begin{equation}\label{fn3}
\M(\overline{TO}_{2k+1,n}(a_1,a_2,\dotsc,a_k))=\M(\overline{TO}_{2k+2,n}(a_1,a_2,\dotsc,a_k)),
\end{equation}
\begin{equation}\label{fn4}
\M(\overline{TE}_{2k,n}(a_1,a_2,\dotsc,a_k))=\M(\overline{TE}_{2k+1,n}(a_1,a_2,\dotsc,a_k)).
\end{equation}
By using the four fundamental Lemmas \ref{VS}, \ref{star}, \ref{spider} and \ref{4cycle} as in the proof Lemma \ref{lem1}, one can get
\begin{equation}\label{QHex4}
\M(\overline{RE}_{2k,n}(a_1,a_2,\dotsc,a_k))=2^{k}\M(\overline{TE}_{2k,n}(a_1,a_2,\dotsc,a_k)).
\end{equation}

We get (\ref{VReq3}) from the equality (\ref{fn4}), Lemma \ref{QHex2} (the equality (\ref{QHex3})), and Lemma\ref{QHex} (the equality (\ref{QHeq2})). Moreover, by (\ref{QHex4}), (\ref{fn2}) and (\ref{VReq4}), we obtain (\ref{VReq1}).

Factorization Theorem implies
\begin{equation}\label{factorn2}
\M(\overline{AR}_{2k+1,2n}(S'))=2^k\M(\overline{TO}_{2k+1,n}(a_1,\dotsc,a_k))\M(\overline{TE}_{2k,n}(a_1,\dotsc,a_k))
\end{equation}
and
\begin{equation}\label{factorn1}
\M(AR_{2k,2n-1}(S''))=2^k\M(\overline{RO}_{2k-1,n}(a_1,\dotsc,a_k))\M(\overline{RE}_{2k,n}(a_1,\dotsc,a_k)),
\end{equation}
where $S':=\{n+1-a_k,n+1-a_{k-1},\dots,n+1-a_1\}\cup\{n+1\}\cup\{n+1+a_1,n+1+a_2,\dots,n+1+a_k\}$, and where
$S'':=\{n-a_k,n-a_{k-1},\dots,n-a_1\}\cup\{n\}\cup\{n+a_1,n+a_2,\dots,n+a_k\}$.
It is easy to verify that \[\Delta(S')=\Delta(S)=\left(\prod_{i=1}^{k}a_i\right)^2\left(\prod_{1\leq i<j\leq k}(aj-a_i)\right)^2\prod_{1\leq i,j \leq k
}(a_i+a_j).\]
Therefore, by (\ref{fn3}) and (\ref{factorn2}), we  get
\begin{align}
\M(\overline{TO}_{2k+2,n}(a_1,\dotsc,a_k))&=\M(\overline{TO}_{2k+1,n}(a_1,\dotsc,a_k))=\frac{\M(\overline{AR}_{2k+1,2n}(S'))}{2^k\M(\overline{TE}_{2k,n}(a_1,\dotsc,a_k))}\\
&=\frac{2^{(2k+1)k}\Delta(S')}{0!1!2!3!\dotsc(2k)!2^{k}\overline{\Od}(a_1,\dotsc,a_k)}\\
&=\frac{2^{k^2}\prod_{i=1}^{k}a_i}{0!2!4!\dotsc(2k)!}\prod_{1\leq i<j\leq k}(a_j-a_i)\prod_{1\leq i\leq j\leq k}(a_i+a_j),
\end{align}
which deduces (\ref{VReq4}).

Similarly, by (\ref{fn1}) and (\ref{factorn1}), we  obtain
\begin{align}
\M(\overline{RO}_{2k,n}(a_1,\dotsc,a_k))&=\M(\overline{RO}_{2k-1,n}(a_1,\dotsc,a_k))=\frac{\M(AR_{2k,2n-1}(S''))}{2^k\M(\overline{RE}_{2k,n}(a_1,\dotsc,a_k))}\\
&=\frac{2^{(2k+1)k}\Delta(S')}{0!1!2!3!\dotsc(2k-1)!2^{2k}\overline{\Od}(a_1,\dotsc,a_k)}\\
&=\frac{2^{k(k-1)}\prod_{i=1}^{k}a_i}{0!2!4!\dotsc(2k-2)!}\prod_{1\leq i<j\leq k}(a_j-a_i)\prod_{1\leq i\leq j\leq k}(a_i+a_j).
\end{align}
Then (\ref{VReq2}) follows.
\end{proof}

\end{document}